\definecolor{egraf}{rgb}{0.2,0.4,0}
\definecolor{oldv}{rgb}{0.7,0.7,0.7}
\theoremstyle{plain}
\newtheorem{thm}{Theorem}[section]
\newtheorem*{thm*}{Theorem}
\newtheorem*{lem*}{Lemma}
\newtheorem{cor}[thm]{Corollary}
\newtheorem{prop}[thm]{Proposition}
\newtheorem{lem}[thm]{Lemma}
\newtheorem{obs}[thm]{Observation}
\newtheorem{quest}[thm]{Question}
\theoremstyle{definition}
\newtheorem{rem}[thm]{Remark}
\newcommand{\RNP}{Radon--Nikod\'{y}m property\xspace}
\newcommand{\eps}{\varepsilon}
\newcommand{\Natural}{\mathbb N}
\newcommand{\Real}{\mathbb R}
\newcommand{\abs}[1]{\left\vert#1\right\vert}
\newcommand{\set}[1]{\left\{#1\right\}}
\newcommand{\cconv}{\mathop{\overline{\mathrm{conv}}}\nolimits}
\newcommand{\1}{\mathbbm{1}}
\newcommand{\norm}[1]{\left\Vert#1\right\Vert}
\DeclareMathOperator{\conv}{conv}
\DeclareMathOperator{\diam}{diam}
\DeclareMathOperator{\sgn}{sign}
\DeclareMathOperator{\supp}{supp}
\DeclareMathOperator{\cof}{cof}
\begin{document}
\title{Asymptotic geometry and delta-points}
\author[T.~A.~Abrahamsen]{Trond A.~Abrahamsen}
\address[T.~A.~Abrahamsen]{Department of Mathematics, University of
  Agder, Postboks 422, 4604 Kristiansand, Norway.}
\email{trond.a.abrahamsen@uia.no}
\urladdr{http://home.uia.no/trondaa/index.php3}

\author[V.~Lima]{Vegard Lima}
\address[V.~Lima]{Department of Engineering Sciences, University of Agder,
Postboks 509, 4898 Grimstad, Norway.}
\email{Vegard.Lima@uia.no}

\author[A.~Martiny]{Andr\'e Martiny}
\address[A.~Martiny]{Department of Mathematics, University of
  Agder, Postboks 422, 4604 Kristiansand, Norway.}
\email{andre.martiny@uia.no}

\author[Y.~Perreau]{Yoël Perreau}
\address[Y.~Perreau]{Laboratoire de Math\'ematiques de Besan\c con, Universit\'e Bourgogne Franche-Comt\'e, CNRS UMR-
6623, 16 route de Gray, 25030 Besan\c con C\'edex, Besan\c con, France}
\email{yoel.perreau@univ-fcomte.fr}

\subjclass[2010]{Primary 46B20, 46B22, 46B04, 46B06}

\keywords{Delta-point, Daugavet-point, asymptotic uniform smoothness,
asymptotic uniform convexity, uniformly non-square norm}

\thanks{
  This work was supported by the AURORA mobility programme (project
  number: 309597) from the Norwegian Research Council and the $``$PHC
  Aurora$"$ program (project number: 45391PF), funded by the French
  Ministry for Europe and Foreign Affairs, the French Ministry for
  Higher Education, Research and Innovation.
}

\maketitle

\begin{abstract}
  We study Daugavet- and $\Delta$-points in Banach spaces. A
  norm one element $x$ is a Daugavet-point (respectively a $\Delta$-point) if
  in every slice of the unit ball (respectively in every slice of the unit
  ball containing $x$) you can find another element of distance as
  close to $2$ from $x$ as desired. 
  
  In this paper we look for criteria and properties ensuring that
  a norm one element is not a Daugavet- or
  $\Delta$-point.
  We show that asymptotically uniformly smooth spaces
  and reflexive asymptotically uniformly convex spaces
  do not contain $\Delta$-points.
  We also show that the same conclusion holds true for the James tree space as well
  as for its predual.

  Finally we prove that there exists a superreflexive Banach space
  with a Daugavet- or $\Delta$-point provided there exists such a
  space satisfying a weaker condition. 

\end{abstract}

\section{Introduction}
\label{sec:introduction}

Daugavet- and $\Delta$-points first appeared in
\cite{AHLP} as natural pointwise versions of geometric characterizations of
the Daugavet property \cite[Lemma~2]{MR1784413} and of the so called
spaces with bad projections \cite[Theorem~1.4]{zbMATH02168839}
(also known as spaces with the diametral local diameter two
property (DLD2P) \cite{BGLPRZ-diametral}).
We refer to Section~\ref{sec:preliminaries} for precise definitions
and equivalent reformulations.

From their introduction on, Daugavet- and $\Delta$-points
attracted a lot of attention and were intensively studied in classical
Banach spaces (\cite{AHLP,ALM,ALMT,HPV}).  In particular a strong
emphasis was put on finding linear or geometric properties that would
prevent norm one elements in a space to be Daugavet- or
$\Delta$-points. It soon appeared that even nice properties which on a
global level prevent the space to have the Daugavet property or the
DLD2P do not provide an obstruction to the existence of Daugavet- or
$\Delta$-points in the space.  For example there exists a Banach space
with a 1-unconditional basis such that the set of Daugavet-points are
weakly dense in the unit ball \cite[Theorem~4.7]{ALMT}.

Another striking example illustrating this was obtained in the context
of Lipschitz-free spaces.  The study of Daugavet- and $\Delta$-points
in this context started in \cite{JRZ} where a characterization of
Daugavet-points in free-spaces over compact metric spaces was
discovered. It was observed that Daugavet-points have to be at
distance $2$ from every denting point of the unit ball in any given
Banach space, \cite[Proposition~3.1]{JRZ}, and it was proved that the
converse holds in every free-space in the compact setting. Extending
this result to the general setting, Veeorg was then able to provide in
\cite{Vee} a surprising example of a metric space whose free-space has
the \RNP (RNP) and admits a Daugavet-point.

In the present paper we continue the investigation of Daugavet- and
$\Delta$-points in general Banach spaces by focusing on the
interactions between those points and the asymptotic geometry of the
space. We provide new examples of Banach spaces failing to contain
$\Delta$-points and we introduce weaker notions which can be viewed as
a step forward in the direction of constructing an example of a
superreflexive space with a Daugavet- or a $\Delta$-point.

Let us now describe the content of the paper and expose our main
results. In Section~\ref{sec:preliminaries} we recall the notion of
slices, give the definition of Daugavet- and $\Delta$-points, and
state a few simple geometric lemmata. As a warm up we give a simple
proof that uniformly non-square spaces do not admit $\Delta$-points
and explain why simple considerations on the diameter of slices cannot
rule out $\Delta$-points outside of this setting. We end the section
with the necessary background on asymptotic uniform properties of
Banach spaces.

In Section~\ref{sec:asympt-unif-smoothn} we focus on asymptotic
smoothness. Our main result there is a condition on the modulus of
asymptotic smoothness $\overline{\rho}_X(t,x)$ at some point $x$ which
prevents the considered point to be a $\Delta$-point.
We will show in particular that no asymptotically smooth point can be
a $\Delta$-point. As a consequence we obtain that asymptotically uniformly smooth spaces
fail to contain $\Delta$-points. We then apply this theorem to obtain
new examples of classical spaces failing to contain $\Delta$-points,
specifically spaces with Kalton's property $(M^*)$ and the
predual $JT_*$ of the James tree space.

In Section~\ref{sec:asympt-near-unif} we obtain partial pointwise
results for asymptotic convexity. As a consequence we show that spaces
with the property $(\alpha)$ of Rolewicz and in particular reflexive
asymptotically uniformly convex spaces do not admit
$\Delta$-points. As a consequence we obtain that the Baernstein space $B$
as well as its dual do not admit $\Delta$-points.

Motivated by the result from the preceding section, we look in
Section~\ref{sec:james-tree-space} at the existence of
Daugavet- and $\Delta$-points in the James tree space $JT$. Our main
result there is that $JT$ does not admit $\Delta$-points.
We also have some partial results for the dual $JT^*$.

In the final section we introduce a natural weaker notion of $(2-\eps)$
Daugavet- and $\Delta$-points and we look at Banach spaces in which
such points exits for every $\eps>0$. We show that if there exists
a superreflexive space $X$ which has a $(2-\eps)$ Daugavet-point (respectively a $(2-\eps)$ $\Delta$-point)
for every $\eps > 0$, then there exists a superreflexive
space (an ultrapower of $X$) with a Daugavet-point (respectively a $\Delta$-point).

\section{Preliminaries}
\label{sec:preliminaries}

Let $X$ be a Banach space, $B_X$ its unit ball,
$S_X$ its unit sphere, and $X^*$ its dual space.
We consider real Banach spaces only.

For any $x^* \in S_{X^*}$ and $\delta > 0$ we define
a \emph{slice} of $B_X$ by
\begin{equation*}
  S(x^*,\delta)
  :=
  \{ y \in B_X : x^*(y) > 1 - \delta \}.
\end{equation*}
The corresponding closed slice is denoted
\begin{equation*}
  \overline{S}(x^*,\delta)
  :=
  \set{ y \in B_X : x^*(y) \ge 1 - \delta }.
\end{equation*}

Slices and closed slices of $B_X$ can also be defined using non-zero
functionals in $X^*$ by replacing the $1$ above by the norm of the
corresponding functional. Now if $x^*$ is a non-zero functional we
have $S(x^*,\delta) =
S\left(\frac{x^*}{\norm{x^*}}, \frac{\delta}{\norm{x^*}}\right)$
and we would like to point out that working with non-normalized
functionals can cause delicate computational problems
(see e.g. the proof of Theorem~\ref{thm:alpha_of_slice_not_delta}).
We will avoid them as much as possible.
We will also informally say that a Banach space has
\emph{small slices of arbitrary large diameter},
when the diameter of $S(x^*,\delta)$ is as close as we want
to $2$ for small $\delta$.

For every $x\in X$,
let us write $D(x):=\{ x^*\in S_{X^*} : \ x^*(x) = \norm{x} \}$.
We will make extensive use of the following lemma.

\begin{lem}\label{intersection_lemma}
  Let $x\in S_X$. For every $n\geq 1$, $\delta>0$,
  and $x_1^*, \ldots, x_n^*\in D(x)$, we have:
  \begin{equation*}
    S\left(
      \frac{1}{n}\sum_{i=1}^nx_i^*, \frac{\delta}{n}
    \right)
    \subset \bigcap_{i=1}^n S(x_i^*,\delta).
  \end{equation*}
\end{lem}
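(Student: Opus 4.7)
The plan is to set $f = \frac{1}{n}\sum_{i=1}^n x_i^*$ and first verify that $f \in S_{X^*}$, so that the notation $S(f,\delta/n)$ is consistent with the convention (normalized functionals) fixed a few lines above the statement. The bound $\|f\| \leq 1$ is immediate from the triangle inequality together with $\|x_i^*\| = 1$ for each $i$, while the fact that every $x_i^*$ lies in $D(x)$ with $\|x\|=1$ gives
\begin{equation*}
  f(x) = \frac{1}{n}\sum_{i=1}^n x_i^*(x) = 1,
\end{equation*}
so $\|f\| \geq 1$ as well. Hence $f\in S_{X^*}$.

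Next, fix $y \in S(f,\delta/n)$. By definition this means $\frac{1}{n}\sum_{i=1}^n x_i^*(y) > 1 - \delta/n$, i.e.\ $\sum_{i=1}^n x_i^*(y) > n - \delta$. To conclude, one isolates each $x_j^*(y)$ from the sum and uses the trivial bound $x_i^*(y) \leq \|x_i^*\|\|y\| \leq 1$ for every $i \neq j$:
\begin{equation*}
  x_j^*(y) \;=\; \sum_{i=1}^n x_i^*(y) - \sum_{i\neq j} x_i^*(y) \;>\; (n - \delta) - (n-1) \;=\; 1 - \delta.
\end{equation*}
This holds for every $j\in\{1,\dots,n\}$, which is exactly the statement that $y \in \bigcap_{i=1}^n S(x_i^*,\delta)$.

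There is no real obstacle here: the argument is a one-line averaging/pigeonhole computation, and the only point that deserves care is the verification that $\|f\| = 1$ (without which the claimed slice radius $\delta/n$ would be off by a factor depending on $\|f\|$).
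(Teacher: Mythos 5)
Your proof is correct and is essentially identical to the paper's: both normalize the average via $f(x)=1$ (so $f\in D(x)$ and $\|f\|=1$) and then isolate each $x_j^*(y)$ from the sum $\sum_i x_i^*(y) > n-\delta$ using the trivial bound $x_i^*(y)\le 1$ on the remaining terms. Nothing further to add.
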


\begin{proof}
  Let $x^* = \frac{1}{n} \sum_{i=1}^n x_i^*$.
  Since every $x_i^*$ is in $D(x)$,
  we immediately obtain that $x^*$ is in $D(x)$
  and in particular that $\norm{x^*} = 1$.
  Now if $y \in S\left( x^*, \frac{\delta}{n} \right)$,
  we have, for every $1 \leq i_0 \leq n$,
  \begin{equation*}
    x_{i_0}^*(y) = nx^*(y) - \sum_{i=1\atop i\neq i_0}^n x_i^*(y)
    > n - \frac{n\delta}{n} - (n-1)
    = 1 - \delta,
  \end{equation*}
  and the conclusion follows.
\end{proof}

Note that the fact that every functional $x_i^*$ attains its norm at
$x$ is crucial here to guarantee that the average of the $x_i^*$
remains a norm one functional.

Let $X$ be a Banach space and $x \in S_X$.
For $\varepsilon > 0$ we define
\begin{equation*}
  \Delta_\varepsilon(x) :=
  \{
  y \in B_X : \|x - y\| \ge 2 - \varepsilon
  \}.
\end{equation*}
Following \cite{AHLP}
we say that $x$ is a \emph{Daugavet-point} if
$B_X = \overline{\conv} \Delta_\varepsilon(x)$
for all $\varepsilon > 0$
and we say that $x$ is a \emph{$\Delta$-point} if
$x \in \overline{\conv} \Delta_\varepsilon(x)$
for all $\varepsilon > 0$.

Using Hahn--Banach separation we get the following
well known lemma that we will use without reference.

\begin{lem}
  Let $X$ be a Banach space
  and let $x \in S_X$.
  \begin{enumerate}
  \item
$x$ is a Daugavet-point
    if and only if
    for all $\varepsilon > 0$, all $\delta > 0$,
    and all $x^* \in S_{X^*}$,
    there exists $y \in S(x^*,\delta)$ such that
    $\|x - y\| > 2 - \varepsilon$.
  \item
  $x$ is a $\Delta$-point
    if and only if
    for all $\varepsilon > 0$, all $\delta > 0$,
    and all $x^* \in S_{X^*}$
    such that $x \in S(x^*,\delta)$, there exists
    $y \in S(x^*,\delta)$ such that
    $\|x - y\| > 2 - \varepsilon$.
  \end{enumerate}
\end{lem}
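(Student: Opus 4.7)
The plan is to prove both equivalences by the standard Hahn--Banach separation argument that converts membership in a closed convex hull into a statement about slices. The structure is identical in (a) and (b); the only difference in (b) is that we must additionally ensure that the separating functional produces a slice containing~$x$.

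For the forward directions, I would start from the definitions and use that affine functionals are continuous on $X$. For (a), fix $\varepsilon,\delta>0$ and $x^*\in S_{X^*}$. Pick $z\in B_X$ with $x^*(z)>1-\delta/2$. Since $B_X=\overline{\conv}\,\Delta_{\varepsilon}(x)$, approximate $z$ in norm by convex combinations $\sum_i \lambda_i y_i$ of points $y_i\in\Delta_{\varepsilon}(x)$; for an approximation within $\delta/2$ we have $\sum_i\lambda_i x^*(y_i)=x^*(\sum_i\lambda_i y_i)>1-\delta$, so some $y_{i_0}$ lies in $S(x^*,\delta)$ and satisfies $\|x-y_{i_0}\|\ge2-\varepsilon$. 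Replacing $\varepsilon$ by $\varepsilon/2$ upgrades the inequality to the strict form required. For (b), apply the same argument but take $z=x$, which is legitimate precisely because $x^*(x)>1-\delta$.

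For the reverse directions, I would argue by contrapositive. Suppose $x$ is not a Daugavet-point: there exists $\varepsilon>0$ and $z\in B_X$ with $z\notin\overline{\conv}\,\Delta_\varepsilon(x)$. By Hahn--Banach, separate $z$ from this closed convex set by a non-zero functional, normalized to $\|x^*\|=1$: there is $\alpha\in\Real$ with
\begin{equation*}
  x^*(y)\le\alpha<x^*(z)\qquad\text{for every }y\in\overline{\conv}\,\Delta_\varepsilon(x).
\end{equation*}
Since $z\in B_X$ we get $\alpha<1$, so $\delta:=1-\alpha>0$ and the slice $S(x^*,\delta)=\{y:x^*(y)>\alpha\}$ misses $\Delta_\varepsilon(x)$ entirely, contradicting the slice criterion in~(a). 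For (b), the same separation (now applied to $z=x\notin\overline{\conv}\,\Delta_\varepsilon(x)$) yields $x^*(x)>\alpha$; any $\delta$ with $1-x^*(x)<\delta\le1-\alpha$ then gives a slice that contains~$x$ but misses $\Delta_\varepsilon(x)$, contradicting the slice criterion in~(b). Such a $\delta$ exists because $x^*(x)>\alpha$.

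There are no real obstacles here: the only mild technical points are (i) normalizing the separating functional to lie in $S_{X^*}$, which is legitimate since the separation in both directions forces the functional to be non-zero, and (ii) the harmless loss between $\|x-y\|\ge2-\varepsilon$ in the definition of $\Delta_\varepsilon(x)$ and the strict $\|x-y\|>2-\varepsilon$ in the lemma, absorbed by replacing $\varepsilon$ with $\varepsilon/2$.
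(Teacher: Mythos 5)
Your argument is correct and is exactly the Hahn--Banach separation argument that the paper indicates (the paper itself omits the proof, citing \cite[Lemma~2.1]{AHLP}); note also that $\Delta_\varepsilon(x)$ is never empty since $-x$ belongs to it, so the separation step is always legitimate. The only detail to adjust is in the forward direction of (b): since one only knows $x^*(x)>1-\delta$ rather than $x^*(x)>1-\delta/2$, the convex combination should approximate $x$ to within $x^*(x)-(1-\delta)$ rather than $\delta/2$, which is a harmless modification.
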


The above lemma appears in \cite[Lemma~2.1]{AHLP},
but note that there is a misprint in the statement
of (2) in \cite[Lemma~2.1]{AHLP}. Following this we say that an
element $x^*\in S_{X^*}$ is a
\emph{weak$^*$ Daugavet-point} if for all $\varepsilon > 0$, all
$\delta > 0$, and all $x \in S_X$ (naturally
identified with an element of $X^{**}$) there exists $y^* \in S(x,\delta)$
such that $\|x^* - y^*\| > 2 - \varepsilon$ , and we say that $x^*\in S_{X^*}$ is a \emph{weak$^*$ $\Delta$-point} if for all
$\varepsilon > 0$, all $\delta > 0$, and all $x \in S_X$ such that $x^* \in
S(x,\delta)$, there exists $y^* \in S(x,\delta)$ such that $\|x^* -
y^*\| > 2 - \varepsilon$.

A Banach space is said to be \emph{uniformly non-square}
if there exists $\varepsilon > 0$ such that
for all $x,y \in B_X$ we have either
$\|\frac{1}{2}(x + y)\| \le 1 - \varepsilon$
or $\|\frac{1}{2}(x - y)\| \le 1 - \varepsilon$.

Uniformly non-square spaces were introduced by James~\cite{James-nsq}.
It is well known that uniformly non-square spaces
are superreflexive and that uniformly convex and uniformly
smooth spaces are uniformly non-square
(see e.g. \cite[Proposition~1]{MR1829721}).
The following simple observations characterizes
uniformly non-square spaces in terms of slices.

\begin{prop}\label{prop:unif_nsq_char}
  Let $X$ be a Banach space.
  The following are equivalent.
  \begin{enumerate}
  \item\label{item:unsq-char-1}
    $X$ is uniformly non-square.
  \item\label{item:unsq-char-2}
    There exists $\varepsilon > 0$
    such that for all $x^* \in S_{X^*}$
    the diameter of $S(x^*,\varepsilon)$
    is less than $2 - 2\varepsilon$.
  \end{enumerate}
\end{prop}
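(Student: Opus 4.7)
The plan is to prove both implications directly using Hahn--Banach, with the only mild subtlety being that in statement~\ref{item:unsq-char-2} the \emph{same} constant $\varepsilon$ plays two roles (slice width and diameter bound), so one must choose the witnessing constant small enough at the end of each argument.

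For \ref{item:unsq-char-2} $\Rightarrow$ \ref{item:unsq-char-1}, I would fix $\varepsilon$ as in \ref{item:unsq-char-2} and take arbitrary $x,y\in B_X$. Assuming $\|\tfrac{1}{2}(x+y)\|>1-\tfrac{\varepsilon}{2}$, use Hahn--Banach to find $x^*\in S_{X^*}$ with $x^*\bigl(\tfrac{1}{2}(x+y)\bigr)>1-\tfrac{\varepsilon}{2}$. Since $x^*(x),x^*(y)\le 1$, each of them must exceed $1-\varepsilon$, so $x,y\in S(x^*,\varepsilon)$. The hypothesis then gives $\|x-y\|<2-2\varepsilon$, i.e. $\|\tfrac{1}{2}(x-y)\|<1-\varepsilon$. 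Thus the space is uniformly non-square with constant $\varepsilon/2$.

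For \ref{item:unsq-char-1} $\Rightarrow$ \ref{item:unsq-char-2}, let $\varepsilon_0>0$ be the constant witnessing uniform non-squareness. Given any $x^*\in S_{X^*}$ and $y,z\in S(x^*,\delta)$, averaging yields $x^*\bigl(\tfrac{1}{2}(y+z)\bigr)>1-\delta$, hence $\|\tfrac{1}{2}(y+z)\|>1-\delta$. Choosing $\delta<\varepsilon_0$ rules out the first alternative of uniform non-squareness applied to $y,z$, so $\|\tfrac{1}{2}(y-z)\|\le 1-\varepsilon_0$, i.e. $\|y-z\|\le 2-2\varepsilon_0$. Therefore $\diam S(x^*,\delta)\le 2-2\varepsilon_0$ whenever $\delta<\varepsilon_0$. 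Picking $\varepsilon:=\varepsilon_0/2$ gives $2-2\varepsilon_0<2-2\varepsilon$ as required.

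The only ``obstacle'' is calibrating constants so that one single $\varepsilon$ appears on both sides in~\ref{item:unsq-char-2}, and keeping strict versus non-strict inequalities consistent with the stated form of each condition; both are resolved by halving the constant at the end of each direction. No deeper idea seems needed.
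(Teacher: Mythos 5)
Your proof is correct and follows essentially the same route as the paper: in one direction you norm $x+y$ by a Hahn--Banach functional to place $x,y$ in a common slice, and in the other you observe that two elements of a slice have a large sum, forcing their difference to be small. The only difference is cosmetic — you argue \ref{item:unsq-char-2} $\Rightarrow$ \ref{item:unsq-char-1} directly rather than by contraposition, and your halving of the constant at the end of \ref{item:unsq-char-1} $\Rightarrow$ \ref{item:unsq-char-2} cleanly handles the strict inequality in the diameter bound, which the paper's own proof leaves as a non-strict $\le 2-2\varepsilon$.
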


\begin{proof}
  \ref{item:unsq-char-1} $\Rightarrow$ \ref{item:unsq-char-2}.
  Let $\varepsilon > 0$ be such that
  for all $x,y \in B_X$ we have either
  $\|\frac{1}{2}(x + y)\| \le 1 - \varepsilon$
  or $\|\frac{1}{2}(x - y)\| \le 1 - \varepsilon$.
  If $x^* \in S_{X^*}$ and $x, y \in S(x^*,\varepsilon)$,
  then $\|x + y\| \ge x^*(x + y) \ge 2 - 2 \varepsilon$,
  so $\|x - y\| \le 2 - 2 \varepsilon$.

  \ref{item:unsq-char-2} $\Rightarrow$ \ref{item:unsq-char-1}.
  If $X$ is not uniformly non-square, then
  for every $\varepsilon > 0$ there exists
  $x,y \in B_X$ such that
  $\|x + y\| > 2 - \varepsilon$ and
  $\|x - y\| > 2 - \varepsilon$.

  Find $x^* \in S_{X^*}$ such that
  $x^*(x + y) > 2 - \varepsilon$
  and observe that $x,y \in S(x^*,\varepsilon)$
  with $\|x - y\| > 2 - \varepsilon$.
\end{proof}

If $x$ is a $\Delta$-point then
for all $\delta > 0$ and $x^* \in S_{X^*}$
with $x^*(x) > 1 - \delta$ we have
$\diam S(x^*,\delta) = 2$
so we immediately get the following.

\begin{cor}\label{cor:unif-nsq-no-Delta}
  Let $X$ be a uniformly non-square Banach space.
  Then $X$ does not admit $\Delta$-points.
\end{cor}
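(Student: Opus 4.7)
The proof plan is a direct contradiction argument combining Proposition~\ref{prop:unif_nsq_char} with the slice reformulation of the $\Delta$-point property from the lemma preceding it. Suppose for contradiction that $X$ is uniformly non-square and yet admits a $\Delta$-point $x \in S_X$. First I would pick any norming functional $x^* \in D(x)$ (the set is non-empty by Hahn--Banach). Since $x^*(x) = 1 > 1 - \delta$ for every $\delta > 0$, the point $x$ lies in the slice $S(x^*, \delta)$ for every $\delta > 0$, so the $\Delta$-point hypothesis applies to this functional and every positive $\delta$.

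Next, I would unfold the definition of a $\Delta$-point to read off the diameters of these slices. For any $\varepsilon > 0$ and any $\delta > 0$, the slice reformulation produces some $y \in S(x^*, \delta)$ with $\|x - y\| > 2 - \varepsilon$. Since $x$ also lies in $S(x^*, \delta)$, this gives $\diam S(x^*, \delta) \geq 2 - \varepsilon$, and letting $\varepsilon \to 0$ yields $\diam S(x^*, \delta) = 2$ for every $\delta > 0$.

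Finally, I would apply Proposition~\ref{prop:unif_nsq_char} in the forward direction: uniform non-squareness produces a constant $\varepsilon_0 > 0$ such that $\diam S(y^*, \varepsilon_0) \leq 2 - 2\varepsilon_0$ for every $y^* \in S_{X^*}$. Specialising to $y^* = x^*$ and $\delta = \varepsilon_0$ contradicts the equality obtained in the previous step, finishing the proof. There is no real obstacle here; the corollary is an immediate consequence of the slice characterisation of uniform non-squareness, and the only thing worth being careful about is the choice of a norm-attaining functional $x^* \in D(x)$ so that the $\Delta$-point condition activates at $x^*$ for arbitrarily small $\delta$.
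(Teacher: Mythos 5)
Your proposal is correct and follows essentially the same route as the paper: the paper notes that a $\Delta$-point forces every slice containing it to have diameter $2$, which contradicts the slice characterisation of uniform non-squareness in Proposition~\ref{prop:unif_nsq_char}. Your only (harmless) specialisation is restricting to a norming functional $x^*\in D(x)$, whereas the paper states the diameter-$2$ observation for every $x^*\in S_{X^*}$ with $x^*(x)>1-\delta$.
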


If a Banach space $X$ is \emph{not} uniformly non-square
then for every $\varepsilon > 0$
there is a slice $S(x^*,\varepsilon)$, $x^* \in S_{X^*}$,
with diameter strictly greater than $2-2\varepsilon$, so $X$ admits small slices of arbitrarily large diameter.
Observe that if $X$ is a dual space, then
we may assume that the functional $x^*$
in the proof of \ref{item:unsq-char-2} $\Rightarrow$
\ref{item:unsq-char-1} in Proposition~\ref{prop:unif_nsq_char}
is weak$^*$ continuous.
Hence a dual space which is not uniformly non-square
has a unit ball with small weak$^*$ slices of diameter
arbitrary close to $2$.
These simple observations show that for spaces that are
not uniformly non-square there is no hope of ruling out
$\Delta$-points by some upper bound on the diameter of
slices of the unit ball.
Note that any Banach space $X$ of dimension greater or equal to
$2$ has an equivalent norm $|\cdot|$ such that $(X,|\cdot|)$
is \emph{not} uniformly non-square \cite[Corollary~1]{MR1829721}.
In particular, Corollary~\ref{cor:unif-nsq-no-Delta} does
not rule out $\Delta$-points in superreflexive or
finite dimensional spaces.

Let $X$ be a Banach space.
Let $\cof(X)$ denote the set of all subspaces of finite co-dimension of $X$.
For $t > 0$ and $x \in S_X$ consider
\begin{equation*}
  \bar{\rho}_X(t,x)
  =
  \inf_{Y \in \cof(X)} \sup_{y \in S_Y} \{ \|x + ty\| - 1 \}
\end{equation*}
and
\begin{equation*}
  \bar{\delta}_X(t,x)
  =
  \sup_{Y \in \cof(X)} \inf_{y \in S_Y} \{ \|x + ty\| - 1 \}.
\end{equation*}
The \emph{modulus of asymptotic convexity} of $X$
is given by
\begin{equation*}
  \bar{\delta}_X(t) = \inf_{x \in S_X} \bar{\delta}_X(t,x).
\end{equation*}
and the \emph{modulus of asymptotic smoothness} of $X$
is given by
\begin{equation*}
  \bar{\rho}_X(t) = \sup_{x \in S_X} \bar{\rho}_X(t,x).
\end{equation*}
The space $X$ is said to be \emph{asymptotically uniformly smooth}
(AUS for short) if $\lim_{t \to 0} t^{-1}\bar{\rho}_X(t) = 0$
and it is \emph{asymptotically uniformly convex} (AUC)
if $\bar{\delta}_X(t) > 0$ for all $t > 0$.
Similarly in $X^*$ there is a weak$^*$-modulus
of asymptotic uniform convexity defined by
\begin{equation*}
  \bar{\delta}^*_{X}(t) = \inf_{x^* \in S_{X^*}}
  \sup_{E} \inf_{y^* \in S_E} \{ \|x^* + ty^*\| - 1 \},
\end{equation*}
where $E$ runs through all weak$^*$-closed subspaces
of $X^*$ of finite codimension. We say that $X^*$
is \emph{AUC}$^*$ if $\bar{\delta}^*_{X}(t) > 0$
for all $t > 0$. Clearly, a dual space which is
AUC$^*$ is also AUC. The converse does not hold true since the James tree space $JT$ is known to have an AUC dual space (see \cite{Gir}) but to admit no equivalent norm whose dual norm is AUC$^*$.

It is well known (see e.g. \cite[Corollay~2.4]{MR3596040})
that $X$ is AUS if and only if $X^*$ is AUC$^*$,
and that if $X$ is reflexive then
$X$ is AUS if and only if $X^*$ is AUC.
The space $\ell_1$ is an example of a non-reflexive AUC space.

A Banach space $X$ is said to have the \emph{uniform Kadec--Klee} (UKK)
property if for every $\varepsilon > 0$, there exists $\delta > 0$
such that whenever $(x_n) \subset B_X$
with $\|x_n - x_m\| \ge \varepsilon$ for all $n \neq m$
and $x_n \to x$ weakly for some $x \in X$, then it follows
that $\|x\| \le 1 - \delta$. This property was introduced
by Huff \cite{MR595102}.
The UKK property is a close relative of asymptotic uniform convexity.
It is not too difficult too show that $X$ is AUC if and only if
for all $\varepsilon > 0$ there exists $\delta > 0$ such that
if $x \in B_{X}$ and all weak neighborhoods $V$ of $x$
satisfy $\diam(V \cap B_{X}) > \varepsilon$, then $\|x\| \le 1- \delta$.
The latter property is a generalization of UKK and it shows
$X$ AUC implies $X$ UKK. (This generalization was used
in e.g. \cite{MR1333523}, and Lancien actually used it
as a definition of UKK.)
It is known that if $X$ does not contain a copy of $\ell_1$,
then the reverse implication holds and $X$ is AUC
if and only if $X$ is UKK. In particular,
AUC and UKK are equivalent for reflexive spaces. The idea is that  a
Banach space $X$ failing this property contains a point $x\in S_X$ and
a net $(x_V) \subset B_X$ converging weakly to $x$ such that
$\norm{x-x_V} \geq \eps$ for every weak neighborhood $V$ of $x$ for
some fixed $\eps > 0$.
Now if $X$ does not contain $\ell_1$, the non-separable version of
Rosenthal's result \cite[Theorem~3]{Ros} proved in
\cite[Theorem~2.6]{GG} tells us that the set $\{x_V\}$ is weakly
sequentially dense in its weak-closure and we can thus extract a
sequence converging weakly to $x$ and providing an obstruction to the
UKK property (up to further extractions in order to obtain an
$\frac{\eps}{2}$-separated sequence).

Let $X$ be a Banach space and $A$ a bounded subset of $X$.
By $\alpha(A)$ we denote the \emph{Kuratowski measure of non-compactness}
of $A$ which is defined as the infimum of $\varepsilon > 0$ such that
$A$ can be covered by a finite number of sets with
diameters less than $\varepsilon$, that is,
\begin{equation*}
  \alpha(A) := \inf \set{
    \varepsilon > 0 :
    A \subset \bigcup_{i=1}^n A_i, A_i \subset X,
    \diam(A_i) < \varepsilon, i = 1,2,\ldots,n
  }.
\end{equation*}
A Banach space $X$ has \emph{Rolewicz' property $(\alpha)$}
if for every $x^* \in S_{X^*}$ and $\varepsilon > 0$
there exists $\delta > 0$ such that
$\alpha(S(x^*,\delta)) \le \varepsilon$.
We say that $X$ has \emph{uniform property $(\alpha)$}
if the same $\delta$ works for all $x^* \in S_{X^*}$.
These properties were introduced by Rolewicz in \cite{MR928575}.
If $X$ has Rolewicz' property $(\alpha)$, then it is reflexive
(see e.g. \cite{MR924764}).

Implicit in Rolewicz \cite[Theorem~3]{MR928575}
is the result that $X$ is AUC and reflexive
if and only if $X$ has uniform property $(\alpha)$,
Rolewicz uses the term ``$X$ is $\Delta$-uniformly
convex'' instead of $X$ is AUC and reflexive.
It is known $X$ is AUC and reflexive
if and only if $X$ is nearly uniformly convex (NUC) \cite{MR595102}
if and only if $X$ is $\Delta$-uniformly convex.
The difference between these two types of uniform convexity
is that they use different (but equivalent) measures of
non-compactness.

We also note that for dual spaces we have that
if $X^*$ is AUC$^*$ then $X^*$ is has \emph{weak$^*$}
uniform property $(\alpha)$, that is,
for all $\varepsilon > 0$ there exists $\delta > 0$
such that $\alpha(S(x,\delta)) \le \varepsilon$
for all $x \in S_X$.
Our Corollary~\ref{cor:local_aus_kuc}
is a pointwise version of this result.
Lennard \cite[Proposition~1.3]{MR943795}
states that $X^*$ has the weak$^*$ version
of the uniform Kadec--Klee property if and only
if $X^*$ has weak$^*$ uniform property $(\alpha)$.
Note that Lennard credits this result to Sims and
he uses a different measure of non-compactness.

\section{Asymptotic uniform smoothness}
\label{sec:asympt-unif-smoothn}

The goal of this section is to show that Banach
spaces with an asymptotic uniformly smooth norm
do not admit $\Delta$-points.

Our first result connects the pointwise
modulus of asymptotic smoothness $\overline{\rho}_X(t,x)$ at $x \in S_X$
with the measure of non-compactness of the slices
defined by $x$.
In his thesis Dutrieux gave a proof that a separable Banach space is
AUS if and only if its dual is weak$^*$ uniformly
Kadec--Klee \cite[Proposition~36]{DutThesis}.
Our proof of the following proposition follows closely
part of the proof given by Dutrieux,
but we do not assume separability.

\begin{prop}\label{prop:optimized_aus_kuc}
  Let $X$ be a Banach space, and fix $x\in S_X$ and $\eps>0$.
 If there exists $t>0$ such that $\bar{\rho}_X(t,x)/t < \varepsilon$, then we can find $\delta>0$ such that $\alpha (S(x,\delta))< 2\varepsilon$.

\end{prop}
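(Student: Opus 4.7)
The plan is to exploit the hypothesis $\bar{\rho}_X(t,x)/t<\varepsilon$ in order to find a finite-codimensional subspace $Y\subset X$ on which every functional in $S(x,\delta)\subset B_{X^*}$ is small. Since the annihilator $Y^\perp$ is finite-dimensional, its bounded subsets are totally bounded, and this will give the required finite cover by sets of small diameter.

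First, I would unpack the modulus. Pick $\varepsilon'$ with $\bar{\rho}_X(t,x)<\varepsilon'<t\varepsilon$. By definition of the infimum, there exists a finite-codimensional subspace $Y$ of $X$ with
\begin{equation*}
  \|x+ty\|\le 1+\varepsilon' \quad \text{for every } y\in S_Y.
\end{equation*}
Next, for any $y^*\in S(x,\delta)$ and any $y\in S_Y$, the inequalities $y^*(x)>1-\delta$ and $y^*(x+ty)\le \|x+ty\|\le 1+\varepsilon'$ together give $t\,y^*(y)<\varepsilon'+\delta$. Applying the same to $-y$ yields
\begin{equation*}
  \|y^*|_Y\|\le \frac{\varepsilon'+\delta}{t}.
\end{equation*}
Now choose $\delta>0$ so small that $\mu:=(\varepsilon'+\delta)/t<\varepsilon$; this is possible since $\varepsilon'/t<\varepsilon$.

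The next step is the passage to the quotient via the standard duality $\mathrm{dist}(y^*,Y^\perp)=\|y^*|_Y\|$. For every $y^*\in S(x,\delta)$ and every auxiliary parameter $\eta>0$ there is $z^*\in Y^\perp$ with $\|y^*-z^*\|<\mu+\eta$; in particular $\|z^*\|\le 1+\mu+\eta$. Since $Y^\perp$ has finite dimension (the codimension of $Y$), its bounded subset $(1+\mu+\eta)B_{Y^\perp}$ is compact and therefore admits, for any $\eta>0$, a finite cover by balls of radius $\eta$ in $X^*$.

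Finally, inflating each of these finitely many balls by $\mu+\eta$ produces a finite cover of $S(x,\delta)$ by balls of diameter at most $2(\mu+2\eta)$ in $X^*$. Choosing $\eta$ small enough so that $\mu+2\eta<\varepsilon$ yields $\alpha(S(x,\delta))<2\varepsilon$, as required. The main technical point is really just the bookkeeping with the parameters $\varepsilon'$, $\delta$, $\mu$ and $\eta$; the conceptual content is the observation that $\bar{\rho}_X(t,x)$ controls how nearly functionals in the slice defined by $x$ must vanish on some cofinite-dimensional subspace.
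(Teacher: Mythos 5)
Your proof is correct, and it takes a genuinely different route from the paper's. The paper works on the predual side: it fixes $Z\in\cof(X)$ witnessing $\bar{\rho}_X(t,x)<\eps t-\delta'$, invokes the decomposition $B_X\subset K+(2+\eta)B_Y$ with $K$ compact (Lemma~\ref{lem:Lemme38}), and shows that every weak$^*$-convergent net in $B_{X^*}$ with limit in $S(x,\delta)$ satisfies $\limsup_\alpha\norm{x^*-x^*_\alpha}<2\eps$; a finite cover is then extracted from the weak$^*$ compactness of $\overline{S}(x,\delta)$. You instead stay entirely in $X^*$: the same slice estimate $t\,y^*(y)\le y^*(x+ty)-y^*(x)<\eps'+\delta$ gives $\norm{y^*|_Y}\le\mu<\eps$, and the isometry $\dist(y^*,Y^\perp)=\norm{y^*|_Y}$ (for $Y$ closed, which one may assume) places the slice within $\mu+\eta$ of the compact set $(1+\mu+\eta)B_{Y^\perp}$, whence total boundedness. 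Both arguments ultimately cash in the finite codimension of $Y$, but yours does so via the finite-dimensionality of $Y^\perp$ rather than via nets and Lemma~\ref{lem:Lemme38}, which makes it shorter and more elementary. It also delivers the stated constant more cleanly: you cover $S(x,\delta)$ by balls of \emph{radius} $\mu+2\eta<\eps$, hence by sets of diameter $<2\eps$, exactly as the Kuratowski measure (defined via diameters) requires; the paper's route controls only the radius of the weak$^*$-neighborhoods by $2\eps$, so read literally it bounds the covering diameters by $4\eps$. The only points worth making explicit in a final write-up are the reduction to $Y$ closed and the trivial case $Y=\{0\}$ (finite-dimensional $X$), neither of which is a gap.
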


To prove this proposition we will need two well known lemmas.
The first lemma is from \cite[Lemma~2.13]{MR1888429}.
A nice different proof can be found in the thesis
of Dutrieux \cite[Lemma~38]{DutThesis}.

\begin{lem}\label{lem:Lemme38}
  Let $X$ be a Banach space.
  For all $Y \in \cof(X)$ and all $\varepsilon > 0$,
  there exists a compact set $K_\varepsilon$ such that
  $B_X \subset K_\varepsilon + (2+\varepsilon)B_Y$.
\end{lem}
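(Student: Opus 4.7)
The plan is to exploit that $X/Y$ is finite-dimensional, so that $\overline{B_{X/Y}}$ is norm-compact, and to build $K_\varepsilon$ as a ``self-similar'' compact set produced by an iterative covering argument. A naive attempt would be to pick a continuous section $s \colon X/Y \to X$ of the quotient map $q$ and set $K := s(\overline{B_{X/Y}})$, but this only yields a decomposition $x = k + y$ with $y$ in a $(1 + \norm{s})B_Y$, and there is no reason for $\norm{s}$ to be close to $1$; iteration will be needed to push the $Y$-bound down to $2+\varepsilon$.

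First I would fix $\eta = \eta(\varepsilon) > 0$ small enough that $\tfrac{2+2\eta}{1-\eta} \le 2+\varepsilon$, then use compactness of $\overline{B_{X/Y}}$ to choose a finite $\eta$-net $\bar z_1, \ldots, \bar z_N$ in $\overline{B_{X/Y}}$ and lifts $z_1, \ldots, z_N \in X$ satisfying $q(z_i) = \bar z_i$ and $\norm{z_i} \le 1 + \eta$. Then, given $x \in B_X$, the one-step decomposition runs as follows: pick $i_0$ with $\norm{q(x) - \bar z_{i_0}}_{X/Y} < \eta$, and use the very definition of the quotient norm to produce $y_0 \in Y$ such that $r_0 := x - z_{i_0} - y_0$ has $\norm{r_0} < \eta$; the triangle inequality then forces $\norm{y_0} \le 2 + 2\eta$.

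The key step is to iterate this construction on $r_0/\eta \in B_X$, then on the next remainder, and so on, producing indices $i_1, i_2, \ldots \in \{1, \ldots, N\}$, vectors $y_k \in Y$ with $\norm{y_k} \le \eta^k(2+2\eta)$, and remainders with $\norm{r_k} < \eta^{k+1}$, so that after $k$ steps
\[
x = \sum_{j=0}^k \eta^j z_{i_j} + \sum_{j=0}^k y_j + r_k.
\]
Letting $k \to \infty$, the first series converges into
\[
K_\varepsilon := \Bigl\{ \sum_{j=0}^\infty \eta^j z_{\varphi(j)} : \varphi \in \{1, \ldots, N\}^{\Natural} \Bigr\},
\]
which is compact as the image of the compact product $\{1, \ldots, N\}^{\Natural}$ under the map $\varphi \mapsto \sum_j \eta^j z_{\varphi(j)}$; continuity of this map is ensured by uniform convergence, since $\norm{\eta^j z_{\varphi(j)}} \le \eta^j(1+\eta)$. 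The second series converges in the closed subspace $Y$ to an element of norm at most $(2+2\eta) \sum_{j \ge 0} \eta^j = \tfrac{2+2\eta}{1-\eta} \le 2+\varepsilon$, and the remainder $r_k$ tends to $0$, yielding the desired decomposition.

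The main obstacle is precisely why one cannot stop after a single step: the remainder $r_0$ lives only in an $\eta$-ball of $X$, which is not compact in infinite dimensions, so one cannot absorb it into the compact part. The iteration converts this non-compact remainder into the geometrically decaying ``Cantor-like'' series above — genuinely compact because it is parametrized by a compact product space — while the cumulative loss in the $Y$-direction still sums to a geometric series, which is exactly why the bound $2+\varepsilon$ can be achieved for any $\varepsilon > 0$.
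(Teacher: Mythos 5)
Your proof is correct and complete; the paper itself does not prove this lemma but only cites \cite[Lemma~2.13]{MR1888429} and Dutrieux's thesis, and your successive-approximation argument (finite $\eta$-net in the compact quotient ball, lifts of norm at most $1+\eta$, iteration on the geometrically shrinking remainder) is essentially the standard proof found in those sources. Both delicate points are handled properly: the bound $\frac{2+2\eta}{1-\eta}\le 2+\varepsilon$ on the accumulated $Y$-component, and the compactness of $K_\varepsilon$ as the continuous image of the product space $\{1,\dots,N\}^{\Natural}$ under a uniformly convergent series.
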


The proof of the next lemma is simple using contradiction,
so we skip it.

\begin{lem}\label{lem:net_diam_limsup}
  Let $X$ be a Banach space.
  Let $x^* \in B_{X^*}$ and $\varepsilon > 0$.

  If $\limsup_\alpha \|x^* - x^*_\alpha\| < \varepsilon$
  whenever $(x^*_\alpha) \subseteq B_{X^*}$ and
  $x^*_\alpha \overset{w^*}{\to} x^*$,
  then there exists a weak$^*$-neighborhood $V$
  of $x^*$ with $V \cap B_{X^*} \subseteq B(x^*,\varepsilon)$.
\end{lem}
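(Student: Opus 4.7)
The plan is to argue by contradiction. Assume the conclusion fails, so that for every weak$^*$-neighborhood $V$ of $x^*$ the set $V \cap B_{X^*}$ is not contained in $B(x^*,\varepsilon)$. I will use this to manufacture a net in $B_{X^*}$ that converges weak$^*$ to $x^*$ yet stays at distance at least $\varepsilon$ from $x^*$, directly contradicting the hypothesis on $\limsup$.

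More concretely, let $\mathcal{V}$ denote the collection of all weak$^*$-neighborhoods of $x^*$, regarded as a directed set under reverse inclusion (so smaller neighborhoods are higher in the order). For each $V \in \mathcal{V}$, pick $x^*_V \in V \cap B_{X^*}$ with $\|x^* - x^*_V\| \ge \varepsilon$; this is possible by the contradiction hypothesis. Fixing any weak$^*$-neighborhood $U$ of $x^*$, the net $(x^*_V)_{V \in \mathcal{V}}$ is eventually in $U$, since for every $V \subseteq U$ we have $x^*_V \in V \subseteq U$. Therefore $x^*_V \overset{w^*}{\to} x^*$, while clearly $(x^*_V) \subseteq B_{X^*}$.

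Applying the hypothesis of the lemma to this net yields $\limsup_V \|x^* - x^*_V\| < \varepsilon$, whereas by construction $\|x^* - x^*_V\| \ge \varepsilon$ for every index $V$, a contradiction. The only step requiring any care is the directed-set bookkeeping, i.e.\ verifying that reverse inclusion on the family of weak$^*$-neighborhoods produces a legitimate net whose weak$^*$ convergence to $x^*$ is automatic; beyond this, the argument is essentially a one-line appeal to the definition of the weak$^*$ topology, and there is no substantial analytic obstacle.
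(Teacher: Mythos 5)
Your proof is correct and is exactly the argument the paper has in mind: the authors explicitly skip the proof, remarking only that it is ``simple using contradiction,'' and your contradiction net indexed by the weak$^*$-neighborhoods of $x^*$ under reverse inclusion is the standard way to carry this out. No gaps.
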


\begin{proof}[Proof of Proposition~\ref{prop:optimized_aus_kuc}]
  Let us assume that $\bar{\rho}_X(t,x)/t < \varepsilon$ and let us write $\delta_x=\eps t-\bar{\rho}_X(t,x)$. We then have the following.  
  \bigbreak

  \textbf{Claim.}
  Take any $\delta<\delta_x$.
  For every $x^* \in S(x,\delta)$ and every
  net $(x_\alpha^*) \subset B_{X^*}$ such that
  $x_\alpha^* \to x^*$ weak$^*$ we have
  $\limsup_\alpha \|x^* - x^*_\alpha\| < 2\varepsilon$.

\bigbreak

  Before proving the claim let us see how to finish the proof. 
  Pick any $\delta<\delta_x$ and take $\delta'$ such that $\delta<\delta'<\delta_x$.
  By the claim and  by 
  Lemma~\ref{lem:net_diam_limsup} we can find for every $x^* \in S(x,\delta')$
  a weak$^*$-neighborhood $V_{x^*}$ of $x^*$ with
  $V_{x^*} \cap B_{X^*} \subseteq B(x^*,2\varepsilon)$.
  Then $(V_{x^*})_{x^* \in S(x,\delta')}$ is an
  open cover of $S(x,\delta')$ and therefore
  an open cover of the weak$^*$ compact set
  $\bar{S}(x,\delta)$. By compactness there is a finite
  subcover.
  
  \bigbreak

  \textit{Proof of the claim.}
  Fix some $\delta<\delta_x$ and let $x^* \in S(x,\delta)$ and
  $(x_\alpha^*) \subset B_{X^*}$ such that
  $x_\alpha^* \to x^*$ weak$^*$.
  We will show that $L=\limsup_\alpha \|x^* - x^*_\alpha\|
   < 2\varepsilon$.
  Again pick any $\delta'$ such that $\delta<\delta'<\delta_x$. By
  definition of $\delta_x$ we then have $\bar{\rho}_X(t,x) < \eps
  t-\delta'$
  so there exists $Z \in \cof(X)$ such that
  \begin{equation*}
    \sup_{z \in S_Z} \|x + tz\| \le 1 + \eps t -\delta'.
  \end{equation*}
  Since for any $s \in [0,1]$
  \begin{equation*}
    \|x + tsz\|
    =
    \|(1-s)x + s(x + tz)\|
    \le
    1 + \eps t -\delta',
  \end{equation*}
  we get
  \begin{equation*}
    \sup_{z \in B_Z} \|x + tz\| \le 1 + \eps t -\delta'.
  \end{equation*}
  
  Now let $Y = Z \cap \ker(x^*)$ and let $\eta > 0$.
  By Lemma~\ref{lem:Lemme38} there exists a compact set $K$
  in $X$ such that $B_X \subseteq K + (2+\eta)B_Y$.

  By compactness of $K$ and boundedness of $(x^*_\alpha)$
  we have that $x^*_\alpha \to x^*$ uniformly on $K$.
  We may therefore choose $\beta$ such that
  \begin{itemize}
  \item
    $|\langle x^*_\beta - x^*, k \rangle| < \eta$
    for all $k \in K$;
  \item
    $x^*_\beta(x) > 1 - \delta$;
  \item
    $|\|x^*_\beta - x^*\| - L| < \eta$.
  \end{itemize}

  Choose $x_\beta \in S_X$ such that
  $\langle x^*_\beta - x^*, x_\beta \rangle > L - \eta$.
  Now write $x_\beta = k_\beta + (2 + \eta)y_\beta$
  with $k_\beta \in K$ and $y_\beta \in B_Y$.
  We get
  \begin{equation*}
    x^*_\beta (y_\beta)
    =
    \langle x^*_\beta - x^*, y_\beta \rangle
    =
    \frac{\langle x^*_\beta - x^*, x_\beta \rangle
      - \langle x^*_\beta - x^*, k_\beta \rangle}
    {2 + \eta}
    > \frac{L - 2\eta}{2 + \eta}
  \end{equation*}
  since $y_\beta \in \ker(x^*)$.
  Therefore
  \begin{equation*}
    1 - \delta
    +
    \frac{t(L-2\eta)}{2+\eta}
    <
    \langle x_\beta^*, x \rangle
    +
    \frac{t(L-2\eta)}{2+\eta}
    <
    \langle x^*_\beta, x + t y_\beta \rangle
    \le
    \|x + ty_\beta\|
    \le 1 + \eps t -\delta'.
  \end{equation*}
  Finally 
  \begin{equation*}
    L - 2\eta < (2+\eta)(\eps-\theta)
  \end{equation*} with $\theta=\frac{\delta'-\delta}{t}>0$.
 Since $\eta > 0$ was arbitrary we get
  $L \le2(\eps-\theta)<2\eps$ as desired.
\end{proof}

As an immediate corollary we get.

\begin{cor}\label{cor:local_aus_kuc}

Let $X$ be a Banach space and let $x\in S_X$ be an asymptotically
smooth point, that is a point for which $\lim_{t\to 0}
\frac{\overline{\rho}_X(t,x)}{t} =0$. Then $\lim_{\delta \to
  0} \alpha(S(x,\delta))=0$.

\end{cor}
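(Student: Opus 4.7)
The plan is essentially to unwind Proposition~\ref{prop:optimized_aus_kuc} against the definition of an asymptotically smooth point. Given any $\varepsilon > 0$, the hypothesis $\lim_{t \to 0} \bar{\rho}_X(t,x)/t = 0$ lets me select some $t > 0$ such that $\bar{\rho}_X(t,x)/t < \varepsilon/2$. I can then feed this $t$ (and the value $\varepsilon/2$) into Proposition~\ref{prop:optimized_aus_kuc} to produce a $\delta_0 > 0$ with $\alpha(S(x,\delta_0)) < \varepsilon$.

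The only remaining step is to pass from one value of $\delta_0$ to all sufficiently small $\delta$. This is immediate from monotonicity: whenever $0 < \delta \le \delta_0$ we have $S(x,\delta) \subseteq S(x,\delta_0)$, and the Kuratowski measure of non-compactness is monotone with respect to inclusion, so $\alpha(S(x,\delta)) \le \alpha(S(x,\delta_0)) < \varepsilon$. Since $\varepsilon > 0$ was arbitrary, this yields $\lim_{\delta \to 0} \alpha(S(x,\delta)) = 0$, which is what we wanted.

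There is really no obstacle here; the corollary is truly just a reformulation of Proposition~\ref{prop:optimized_aus_kuc} in the limit, combined with the trivial monotonicity of $\alpha$ on nested slices. The entire proof can be written in two or three lines.
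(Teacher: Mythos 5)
Your proof is correct and matches what the paper intends: the corollary is stated there as an immediate consequence of Proposition~\ref{prop:optimized_aus_kuc}, and your deduction (apply the proposition with $\varepsilon/2$, then use monotonicity of $\alpha$ under the inclusion $S(x,\delta)\subseteq S(x,\delta_0)$ for $\delta\le\delta_0$) is exactly the intended two-line argument.
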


We will now show that this condition on the Kuratowski index of the slices
$S(x,\delta)$ prevents the point $x$ to be a $\Delta$-point.

\begin{thm}\label{thm:alpha_of_slice_not_delta}
  Let $X$ be a Banach space and let $x \in S_X$.
  If there exists a $\delta > 0$
  such that $\alpha(S(x,\delta)) < \frac{2}{3}$,
  then $x$ is not a $\Delta$-point.
\end{thm}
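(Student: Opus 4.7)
The plan is to construct a slice of $B_X$ through $x$ in which every point lies at distance at most $2 - \varepsilon_0$ from $x$ for some uniform $\varepsilon_0 > 0$, thereby violating the defining property of a $\Delta$-point.

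First, I would fix $\eta$ with $\alpha(S(x,\delta)) < \eta < 2/3$ and cover $S(x,\delta) \cap B_{X^*}$ by finitely many subsets $A_1,\dots,A_n$ of norm-diameter strictly less than $\eta$. Since the norm is weak$^*$-lower semicontinuous on $B_{X^*}$, taking weak$^*$-closures does not increase norm-diameters, so I may assume every $A_i$ is weak$^*$-closed in $B_{X^*}$. Setting $I^* = \{i : A_i \cap D(x) \neq \emptyset\}$, the identity $D(x) = \bigcap_{\delta'>0} \overline{S}(x,\delta')$ combined with weak$^*$-compactness and the finite intersection property yields, for every $i \notin I^*$, some $\delta_i > 0$ with $A_i \cap \overline{S}(x,\delta_i) = \emptyset$. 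Defining $\delta^* = \min(\delta, \min_{i \notin I^*} \delta_i) > 0$, I obtain $S(x,\delta^*) \subset \bigcup_{i \in I^*} A_i$. For every $i \in I^*$ I pick $x_i^* \in A_i \cap D(x)$ and form the averaged functional $z^* = \frac{1}{m}\sum_{i \in I^*} x_i^*$, where $m = |I^*|$. As in Lemma~\ref{intersection_lemma}, $z^* \in D(x)$, so $x \in S(z^*, \tau)$ for every $\tau > 0$.

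Pick next $\tau > 0$ small enough that $m\tau + \eta < 2$, and set $\varepsilon_0 = \min(\delta^*, 2 - \eta - m\tau) > 0$. The claim is that $\|x - y\| \leq 2 - \varepsilon_0$ for every $y \in S(z^*,\tau)$, which exhibits a slice through $x$ witnessing that $x$ is not a $\Delta$-point. Arguing by contradiction, if some $y \in S(z^*,\tau)$ satisfies $\|x-y\| > 2-\varepsilon$ for an $\varepsilon < \varepsilon_0$, Hahn--Banach provides $w^* \in S_{X^*}$ with $w^*(x) > 1 - \varepsilon$ and $w^*(y) < -1 + \varepsilon$. Since $\varepsilon < \delta^*$, we have $w^* \in S(x,\delta^*) \subset \bigcup_{i \in I^*} A_i$, so $w^* \in A_{i_0}$ and $\|w^* - x_{i_0}^*\| < \eta$ for some $i_0 \in I^*$, giving $x_{i_0}^*(y) < w^*(y) + \eta < -1 + \varepsilon + \eta$. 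On the other hand, expanding $z^*(y) > 1 - \tau$ exactly as in the proof of Lemma~\ref{intersection_lemma} forces $x_{i_0}^*(y) > 1 - m\tau$. Combining the two bounds gives $\varepsilon > 2 - \eta - m\tau \geq \varepsilon_0$, contradicting $\varepsilon < \varepsilon_0$.

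The main obstacle is that the functionals supplied by the Kuratowski cover only satisfy $x_i^*(x) > 1 - \delta$, whereas Lemma~\ref{intersection_lemma} demands that they lie in $D(x)$ so that $z^*$ remains of norm one and the clean estimate $x_{i_0}^*(y) > 1 - m\tau$ is available on $S(z^*, \tau)$. This is precisely the ``delicate computational problem'' flagged by the authors in Section~\ref{sec:preliminaries}; the weak$^*$-closure and weak$^*$-compactness step above resolves it by either producing a point of $D(x) \cap A_i$ inside every $A_i$ that is genuinely needed, or else shrinking the slice parameter from $\delta$ to $\delta^*$ so that the remaining $A_i$'s can simply be discarded.
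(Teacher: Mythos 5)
Your proof is correct, and it takes a genuinely different route from the paper's. The paper also starts from a finite cover of $S(x,\delta)$, but it re-centers the covering sets at points of $S_{X^*}\cap S(x,\delta')$ at the cost of doubling the radii (its Claim~1), and then has to average functionals that only \emph{approximately} norm $x$; this forces the non-normalized slice $S(y^*,\delta')=\{y: y^*(y)>\norm{y^*}-\delta'\}$ and the careful bookkeeping of its Claim~2, and the doubling is exactly what produces the threshold $\delta+2\eps\le 3\eps<2$, i.e.\ the constant $2/3$. You instead pass to weak$^*$-closed covering sets and use weak$^*$-compactness of $\overline{S}(x,\delta')$ together with $D(x)=\bigcap_{\delta'>0}\overline{S}(x,\delta')$ to either locate a center lying \emph{exactly} in $D(x)$ inside each surviving set or discard the set by shrinking $\delta$ to $\delta^*$. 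This costs nothing in radius, makes the averaged functional $z^*$ genuinely norm one so that Lemma~\ref{intersection_lemma} applies verbatim, and eliminates the non-normalized-functional issue entirely. The trade-off is that your argument invokes Banach--Alaoglu and weak$^*$-lower semicontinuity of the norm (to see that weak$^*$-closures do not increase norm-diameter --- for $u,v$ in the closure take nets $u_\alpha\to u$, $v_\beta\to v$ and use $\norm{u-v}\le\liminf\norm{u_\alpha-v_\beta}$), whereas the paper's proof is purely a metric covering argument; but what you buy is substantial: your contradiction only requires $2-\eta-m\tau>0$, and since $\tau$ is chosen after $m$ is fixed, any $\eta<2$ works. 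So your argument actually proves the stronger statement that $\alpha(S(x,\delta))<2$ for some $\delta>0$ already rules out $x$ being a $\Delta$-point; the hypothesis $\alpha(S(x,\delta))<\frac{2}{3}$ is never used at full strength. The only cosmetic points worth adding are the observations that $I^*\neq\emptyset$ (since $\emptyset\neq D(x)\subset S(x,\delta)$) and that one may first intersect each $A_i$ with $B_{X^*}$ so that the weak$^*$-closures stay in $B_{X^*}$.
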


\begin{proof}
  Find $\delta > 0$ and $\varepsilon > 0$ such that
  $\alpha\left(S(x,\delta)\right) < \varepsilon < \frac{2}{3}$. Since
  for every $\delta'\leq \delta$, we have $S(x,\delta')\subset
  S(x,\delta)$ so $\alpha\left(S(x,\delta')\right) \leq
  \alpha\left(S(x,\delta)\right) < \varepsilon$, we may, and do, 
  assume that $\delta \leq  \varepsilon$.

  By assumption we can find $x_1^*,\ldots, x_n^*\in B_{X^*}$ with
  $n \geq 1$ such that
  $S(x,\delta)\subset \bigcup_{i=1}^n B(x_i^*,\varepsilon)$.
  In fact we have the following.

  \bigbreak
  \textbf{Claim 1.}
  For every $\delta' \leq \delta$,
  we can find $1 \leq m \leq n$ and $y_1^*, \ldots, y_m^* \in S_{X^*}$
  such that
  \begin{enumerate}
  \item
    $y_j^*\in S(x,\delta')$ for every $1\leq j\leq m$;
  \item
    $S_{X^*} \cap S(x,\delta')
    \subset
    \bigcup_{j=1}^m B(y_j^*, 2\varepsilon)$.
  \end{enumerate}

  \begin{proof}[Proof of Claim~1]
    Let us take $\delta' \leq \delta$.
    As observed before, we have
    $S(x, \delta') \subset S(x, \delta)$, and thus
    $S_{X^*} \cap S(x, \delta') \subset S(x, \delta)
    \subset \bigcup_{i=1}^n B(x_i^*, \varepsilon)$.

    Clearly the set $S_{X^*}\cap S(x, \delta')$ is not empty since it
    contains $D(x)$, so the set
    $J = \{ 1 \leq i \leq n: \ (S_{X^*} \cap S(x, \delta') )
    \cap B(x_i^*, \varepsilon) \neq \emptyset \}$
    has cardinality $\abs{J} = m$ with $1 \leq m \leq n$.

    Since obviously $S_{X^*} \cap S(x, \delta') \subset
    \bigcup_{j \in J} B(x_j^*, \varepsilon)$,
    the conclusion follows by picking, for every
    $j \in J$, an element $y_j^* \in
    (S_{X^*} \cap S(x, \delta') ) \cap B(x_j^*, \varepsilon)$,
    and by observing that
    $B(x_j^*, \varepsilon) \subset B(y_j^*, 2\varepsilon)$.
  \end{proof}
  \bigbreak

  Now let us find $1 \leq m \leq n$ and $y_1^*,\ldots y_m^*\in S_{X^*}$
  satisfying the properties of Claim~$1$ for
  $\delta' = \frac{\delta}{2n}$, and let us define
  $y^* = \frac{1}{m} \sum_{j=1}^m y_j^*$. We have the following.

  \bigbreak
  \textbf{Claim 2.}
  The slice
  $S(y^*, \delta') = \{y \in B_X : \ y^*(y) > \norm{y^*} -\delta' \}$
  satisfies
  \begin{equation*}
    x \in S(y^*, \delta') \subset \bigcap_{j=1}^m S(y_j^*, \delta).
  \end{equation*}

  \begin{proof}[Proof of Claim~2]
    Since every $y_j^*$ is in $S_{X^*}$ and satisfies
    $y_j^*(x) > 1-\delta'$, we clearly have $\norm{y^*}\leq 1$ and
    $y^*(x) > 1 - \delta' \geq \norm{y^*} - \delta'$ so
    $x \in S(y^*, \delta')$ and $\norm{y^*} \geq 1 - \delta'$.

    Now for every $y\in S(y^*,\delta')$ we have
    $y^*(y) > \norm{y^*} - \delta' \geq 1 - 2\delta'$
    so
    \begin{equation*}
      \frac{1}{m} y_k^*(y)
      > 1 - 2\delta' - \frac{1}{m} \sum_{j \neq k} y_j^*(y)
    \end{equation*}
    for every $1 \leq k \leq m$ and thus
    $y_k^*(y) > 1 - 2m \delta'
    = 1 - \frac{m\delta}{n} \geq 1 - \delta$
    since $m \leq n$.
  \end{proof}

  \bigbreak
  Let $y \in S(y^*,\delta')$ and $z^* \in S_{X^*}$.
  If $z^*(x) > 1 - \delta'$, then
  \begin{equation*}
    z^* \in S_{X^*} \cap S(x,\delta')
    \subset \bigcup_{j=1}^m B(y_j^*,2\varepsilon)
  \end{equation*}
  so there exists $y^*_{j_0}$ with
  $\|z^* - y_{j_0}^*\| \le 2\varepsilon$.
  Then
  \begin{equation*}
    z^*(y) \ge y_{j_0}^*(y) - 2\varepsilon
    > 1 - \delta - 2\varepsilon
  \end{equation*}
  and hence
  \begin{equation*}
    z^*(x - y) \le 1 - z^*(y)
    < \delta + 2\varepsilon \le 3\varepsilon.
  \end{equation*}
  While if $z^*(x) \le 1 - \delta'$, then
  \begin{equation*}
    z^*(x - y) \le 1 - \delta' + 1
    = 2 - \delta'.
  \end{equation*}
  Hence $\|x - y\| \le \max(2-\delta',3\varepsilon)$.
  We have $\delta' > 0$ and $\varepsilon <\frac{2}{3}$ so
  $x$ cannot be a $\Delta$-point.
\end{proof}

Combining Theorem~\ref{thm:alpha_of_slice_not_delta} and
Proposition~\ref{prop:optimized_aus_kuc} we then get

\begin{prop}

Let $X$ be a Banach space and let $x\in S_X$ satisfy $\overline{\rho}_X(t,x)<\frac{t}{3}$ for some $t>0$. Then $x$ is not a $\Delta$-point.

\end{prop}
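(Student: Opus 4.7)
The plan is straightforward: this proposition is obtained by chaining the two preceding results, namely Proposition~\ref{prop:optimized_aus_kuc} and Theorem~\ref{thm:alpha_of_slice_not_delta}, with the numerical constants lined up so that the thresholds match.

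More precisely, I would start from the hypothesis $\overline{\rho}_X(t,x) < t/3$, which rewrites as $\overline{\rho}_X(t,x)/t < \varepsilon$ for $\varepsilon = 1/3$. Applying Proposition~\ref{prop:optimized_aus_kuc} with this value of $\varepsilon$ then yields some $\delta > 0$ for which
\begin{equation*}
  \alpha(S(x,\delta)) < 2\varepsilon = \tfrac{2}{3}.
\end{equation*}
At that point the hypothesis of Theorem~\ref{thm:alpha_of_slice_not_delta} is exactly satisfied, so one concludes immediately that $x$ cannot be a $\Delta$-point.

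There is no real obstacle here since both ingredients have already been proved; the only thing to double-check is that the constants match, i.e.\ that $\varepsilon = 1/3$ is precisely the borderline value for which $2\varepsilon$ still fits under the $2/3$ threshold in Theorem~\ref{thm:alpha_of_slice_not_delta}. This is why the assumption of the proposition is stated with the sharp constant $t/3$ rather than something smaller, and it explains where the $3$ in the modulus bound comes from.
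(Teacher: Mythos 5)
Your proposal is correct and is exactly the paper's argument: the proposition is stated there as an immediate combination of Proposition~\ref{prop:optimized_aus_kuc} (applied with $\varepsilon = 1/3$ to get $\alpha(S(x,\delta)) < 2/3$) and Theorem~\ref{thm:alpha_of_slice_not_delta}. The constants match as you checked, so nothing further is needed.
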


In particular no asymptotically smooth point can be a $\Delta$-point and we obtain

\begin{thm}\label{thm:AUS_no_delta_points}
  Let $X$ be an AUS Banach space.
  Then $X$ does not admit a $\Delta$-point.
\end{thm}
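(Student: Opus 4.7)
The plan is to observe that this theorem is an immediate consequence of the preceding proposition combined with the definition of AUS, and to spell out why the pointwise hypothesis of that proposition is automatically satisfied at every norm-one point.

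First, I would recall that $X$ is AUS precisely when $\lim_{t \to 0} t^{-1}\bar{\rho}_X(t) = 0$, and that by definition $\bar{\rho}_X(t) = \sup_{x \in S_X} \bar{\rho}_X(t,x)$ dominates the pointwise modulus $\bar{\rho}_X(t,x)$ for every $x \in S_X$. Consequently $\lim_{t\to 0} \bar{\rho}_X(t,x)/t = 0$ holds uniformly in $x \in S_X$, and one can even pick a single $t>0$ such that $\bar{\rho}_X(t,x) < t/3$ for every $x \in S_X$ simultaneously (though this uniformity is not needed).

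Second, I would fix an arbitrary $x \in S_X$, invoke the preceding proposition with that $t$ to conclude that $x$ is not a $\Delta$-point, and observe that since $x$ was arbitrary this gives that $X$ admits no $\Delta$-point.

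There is essentially no obstacle at this final step: all of the substance has already been deployed in the chain Proposition~\ref{prop:optimized_aus_kuc} (pointwise asymptotic smoothness controls the Kuratowski measure of non-compactness of slices) $\Rightarrow$ Theorem~\ref{thm:alpha_of_slice_not_delta} (small enough Kuratowski measure forbids $\Delta$-points) $\Rightarrow$ the unnamed proposition preceding the theorem. The present theorem is merely the observation that AUS forces the hypothesis of that proposition to hold at every point of the sphere.
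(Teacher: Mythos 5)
Your proposal is correct and is essentially identical to the paper's own (one-line) deduction: the theorem is stated as an immediate consequence of the preceding proposition, using exactly the observation that $\bar{\rho}_X(t,x)\le\bar{\rho}_X(t)$ and that AUS supplies a $t>0$ with $\bar{\rho}_X(t)/t<\frac{1}{3}$, uniformly over $x\in S_X$. Nothing further is needed.
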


Next let us collect some examples where the
above corollary applies.

Recall that a Banach space has Kalton's property $(M)$
if whenever $x,y \in X$ with $\|x\|=\|y\|$ and
$(x_\alpha)$ is a bounded weakly null net in $X$, then
\begin{equation*}
  \limsup_\alpha \| x + x_\alpha\|
  =
  \limsup_\alpha \| y + x_\alpha\|.
\end{equation*}
Similarly $X$ has property $(M^*)$
if whenever $x^*,y^* \in X^*$ with $\|x^*\|=\|y^*\|$ and
$(x^*_\alpha)$ is a bounded weak$^*$ null net in $X^*$, then
\begin{equation*}
  \limsup_\alpha \| x^* + x^*_\alpha\|
  =
  \limsup_\alpha \| y^* + x^*_\alpha\|.
\end{equation*}
If $X$ has property $(M^*)$, then $X$ has property
$(M)$ and $X$ is an $M$-ideal in $X^{**}$
(see e.g. \cite[Proposition~VI.4.15]{HWW}).
In particular, $X$ is an Asplund space
(see e.g. \cite[Theorem~III.3.1]{HWW}).
It is well known that property $(M^*)$ is
inherited by both subspaces and quotients
(see e.g. \cite{MR2000j:46034}).

By chasing references we find that the following
proposition holds.

\begin{prop}\label{prop:propM-AUS-char}
  Assume a Banach space $X$ has property $(M)$.
  The following are equivalent:
  \begin{enumerate}
  \item\label{item:pM1}
    $X$ is AUS;
  \item\label{item:pM2}
    $X$ contains no copy of $\ell_1$;
  \item\label{item:pM3}
    $X$ has property $(M^*)$.
  \end{enumerate}
\end{prop}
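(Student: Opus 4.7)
The plan is to trace the cycle $(3) \Rightarrow (2) \Rightarrow (1) \Rightarrow (3)$ by invoking results from the literature on property $(M)$, $M$-ideals, and asymptotic smoothness (mostly due to Kalton and Kalton--Werner). Under the standing assumption that $X$ has property $(M)$, none of the implications requires additional ad hoc machinery beyond a careful net-extraction argument.

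The implication $(3) \Rightarrow (2)$ is essentially already noted in the paragraph preceding the proposition: property $(M^*)$ forces $X$ to be an $M$-ideal in $X^{**}$, hence an Asplund space, and Asplund spaces contain no isomorphic copy of $\ell_1$ (otherwise $\ell_\infty$ would embed into a dual that is required to have the \RNP, a contradiction).

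For $(2) \Rightarrow (3)$, which is the substantial step, I would follow Kalton's lifting argument: given a bounded weak$^*$ null net $(x_\alpha^*)\subset X^*$ and functionals $x^*,y^*\in X^*$ with $\|x^*\|=\|y^*\|$, pick norming elements $z_\alpha\in B_X$ for $x^*+x_\alpha^*$. Since $X$ does not contain $\ell_1$, the netted version of Rosenthal's theorem already used in the preliminaries provides a weakly convergent subnet $z_\alpha\to z$; writing $z_\alpha = z + u_\alpha$ with $u_\alpha$ bounded and weakly null, property $(M)$ applied to the sequence (net) $(u_\alpha)$ shows that $\limsup_\alpha\|x^*+x_\alpha^*\|$ depends only on $\|x^*\|$ and on the asymptotic behavior of $(x_\alpha^*)$, yielding the same value when $x^*$ is replaced by $y^*$; this is exactly $(M^*)$.

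Finally, $(3)\Rightarrow (1)$ follows from the fact that $(M^*)$ forces the "$\ell_\infty$-type" max behavior on $X^*$, i.e.\ $\limsup_\alpha\|x^*+x_\alpha^*\|=\max(\|x^*\|,\limsup_\alpha\|x_\alpha^*\|)$ for every bounded weak$^*$ null net $(x_\alpha^*)$. This directly forces $\bar\delta^*_X(t)>0$ for all $t>0$, so $X^*$ is AUC$^*$, and then by the duality AUS $\Leftrightarrow$ AUC$^*$ recalled in the preliminaries, $X$ is AUS. The main obstacle is the step $(2)\Rightarrow(3)$: the net-Rosenthal extraction combined with the bookkeeping needed to pass property $(M)$ for weakly null nets in $X$ through Hahn--Banach into a weak$^*$ statement in $X^*$. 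Every remaining implication is either noted in the paper's preliminaries or a standard consequence of the cited theory.
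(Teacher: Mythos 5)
Your step $(3)\Rightarrow(1)$ rests on a false claim, and this is the most serious problem. Property $(M^*)$ does \emph{not} force $\limsup_\alpha\|x^*+x_\alpha^*\|=\max(\|x^*\|,\limsup_\alpha\|x_\alpha^*\|)$ for bounded weak$^*$ null nets: in $\ell_2$ (which has $(M^*)$) the asymptotic formula is $\limsup\|x^*+x_\alpha^*\|^2=\|x^*\|^2+\limsup\|x_\alpha^*\|^2$, and in $c_0^*=\ell_1$ it is additive, not a maximum. Property $(M^*)$ only says the $\limsup$ is a function of $\|x^*\|$; it says nothing about which function. Worse, the duality runs the other way from what you assert: if the max formula did hold in $X^*$, then adding a weak$^*$ null perturbation of norm $t\le 1$ to a norm one functional would not increase the norm asymptotically, which yields $\bar{\delta}^*_X(t)=0$ --- the \emph{negation} of AUC$^*$. (The max-type behavior is the signature of AUS in $X$ itself, for weakly null nets; its dual counterpart in $X^*$ is a lower, additive-type estimate.) The implication $(M^*)\Rightarrow$ AUS is true but requires the Dutta--Godard argument (extended to the non-separable case via Garc\'ia-Lirola--Raja, as the paper does); it does not drop out of a one-line formula. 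Separately, note that the implications you actually establish are $(3)\Rightarrow(2)$, $(2)\Rightarrow(3)$ and $(3)\Rightarrow(1)$, which give $(2)\Leftrightarrow(3)\Rightarrow(1)$ but never return from $(1)$; you still need the (easy) implication AUS $\Rightarrow$ Asplund $\Rightarrow$ no copy of $\ell_1$ to close the equivalence.

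Your $(2)\Rightarrow(3)$ is the substantial implication, and the sketch asserts rather than proves the key point. After writing $z_\alpha=z+u_\alpha$ and expanding, what you obtain is $\limsup_\alpha\|x^*+x_\alpha^*\|=\limsup_\alpha\bigl(x^*(z)+x_\alpha^*(u_\alpha)\bigr)$, where $z$ and $(u_\alpha)$ were chosen \emph{for} $x^*$; concluding that this ``depends only on $\|x^*\|$'' is precisely the content of Kalton--Werner's Theorem~2.6, and property $(M)$ does not enter your computation anywhere. To make it enter one must test $y^*+x_\alpha^*$ against vectors $w+u_\alpha$ with $w$ nearly norming $y^*$ and control $\|w+u_\alpha\|$ by comparison with $\|z+u_\alpha\|\le 1$ via $(M)$, which requires matching $\|w\|$ with $\|z\|$ and careful $\limsup$ bookkeeping along subnets. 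There is also a real obstacle in the net extraction: the non-separable Rosenthal-type result quoted in the preliminaries produces a weakly convergent \emph{sequence} from the weak closure of the set of values, not a weakly convergent subnet of the given net with the indexing you need. The clean route, and the one the paper takes, is to observe that both $(M)$ and $(M^*)$ are separably determined and then quote Kalton--Werner for separable spaces not containing $\ell_1$.
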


\begin{proof}
  \ref{item:pM1} $\Rightarrow$ \ref{item:pM2}.
  If $X$ is AUS, then $X$ is Asplund
  (see e.g. \cite[Proposition~2.4]{MR1888429}).
  Hence $X$ contains no copy of $\ell_1$.

  \ref{item:pM2} $\Rightarrow$ \ref{item:pM3}.
  If $X$ contains no copy of $\ell_1$,
  then no separable subspace of $X$ can contain $\ell_1$.
  Clearly every separable closed subspace of $X$
  has property $(M)$ (both net and sequential version,
  see \cite[Proposition~1]{Oja4})
  and then they all have property $(M^*)$
  (both net and sequential version)
  by Theorem~2.6 in \cite{KalW2}.
  Finally $X$ has property $(M^*)$ if every separable
  closed subspace does \cite[Proposition~3.1]{MR2000j:46034}.

  \ref{item:pM3} $\Rightarrow$ \ref{item:pM1}.
  Dutta and Godard \cite{MR2427395} proved that
  if $X$ is a separable Banach space with
  property $(M^*)$, then $X$ is AUS.
  However, using property ($M^*$) and
  Proposition~2.2 in \cite{MR3742968}
  one finds $\bar{\rho}_X(t,x) = \bar{\rho}_X(t)$
  for all $x \in S_X$ and their proof
  also works in the non-separable case.
\end{proof}

Since $c_0$ has property $(M^*)$ we have
that all subspaces and quotients of $c_0$
are AUS and they all fail to contain $\Delta$-points.
All these examples are $M$-ideals in their bidual,
that is, they are $M$-embedded \cite[Chapter~3]{HWW}.

Note that there are $M$-embedded spaces which are not AUS.
For example the Schreier space $\mathcal{S}$ is not AUS since
it does not have property $(M^*)$.
Indeed, if a Banach space $X$ has property $(M^*)$,
then the relative norm and weak$^*$ topologies
on $S_{X^*}$ coincide (see e.g. \cite[Proposition~VI.4.15]{HWW}).
But if $(e_i)$ is the unit vector basis in $\mathcal{S}$
and $(e_i^*)$ the the biorthogonal functionals in the dual,
then $e_2^* + e_i^* \in S_{\mathcal{S}^*}$ and converges weak$^*$ to
$e_2^*$, but not in norm. Note however that $\mathcal{S}$
does admit $\Delta$-point by Proposition~2.15 in \cite{ALMT}.

Let $X$ be a Banach space with a normalized basis $(e_i)$ (or more
generally an FDD $(E_i)$). We say that
\emph{$(e_i)$ admits block upper $\ell_p$ estimates} for some $p\in
(1,\infty) $ if there is a constant $C>0$ such that for every finite
blocks $x_1,\ldots, x_N$ of $(e_i)$ with consecutive disjoint supports
we have $\norm{\sum_{n=1}^Nx_n}^p\leq C\sum_{n=1}^N\norm{x_n}^p$. We
say that $(e_i)$ admits block lower $\ell_q$ estimates for some $q\in
(1,\infty) $ if there is a constant $c>0$ such that for every finite
blocks $x_1,\ldots, x_N$ of $(e_i)$ with consecutive disjoint supports
we have $\norm{\sum_{n=1}^Nx_n}^q\geq c\sum_{n=1}^N\norm{x_n}^q$. It
is well known that a basis admitting upper $\ell_p$ estimates is
shrinking while a basis admitting lower $\ell_q$ estimates is
boundedly complete. The latter can be proved by using the following
criterion, which is left as an exercise in \cite[Exercise~3.8]{AK} and
whose proof can be found in
\cite[Proposition~3.1]{CauLectureNotes}:
a basis is boundedly complete if and only if
$\sup_N \norm{\sum_{n=1}^N x_n}=\infty$ for every block sequence
$(x_n)$ of $(e_i)$ that is bounded away from $0$. The former is then obtained
by duality. Applying \cite[Corollary~2.4]{MR3742968} we then have that any
space with a basis admitting block upper $\ell_p$ estimates is AUS
(with power type $p$) and that any space admitting a basis with block
lower $\ell_q$ estimates is AUC$^*$ (with power type $q$) as the dual
of the space of $Y=[e_i^*]$.
As a consequence, a Banach space with a basis admitting block upper
$\ell_p$ estimates does not admit $\Delta$-points and the predual of a
Banach space with a basis admitting lower $\ell_q$ estimates does not
admit $\Delta$-points. This applies in particular to the predual of
the James tree space (see Section~\ref{sec:james-tree-space}) and to the Baernstein space $B$ (see at the end of Section~\ref{sec:asympt-near-unif}).

\section{Asymptotic uniform convexity}
\label{sec:asympt-near-unif}

The main result of this section is that
reflexive AUC spaces do not have $\Delta$-points.
The proof uses the characterization of
reflexive AUC spaces
in terms of the measure of non-compactness
of slices and relies on the following result of
Kuratowski (see e.g. \cite[p.~151]{MR3289625})

\begin{lem}\label{lem:Kuratowski}
  Let $(M,d)$ be complete metric space.
  If $(F_n)$ is a decreasing sequence of non-empty,
  closed, and bounded subsets of $M$ such that
  $\lim_{n} \alpha(F_n) = 0$, then the intersection
  $F_\infty = \bigcap_{n=1}^\infty F_n$ is a non-empty
  compact subset of $M$.
\end{lem}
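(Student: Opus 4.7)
The plan is to deduce both the non-emptiness and the compactness of $F_\infty$ from the defining property of $\alpha$ together with completeness of $(M,d)$. The key observation I would exploit is that $\alpha(A)=0$ is equivalent to $A$ being totally bounded: by definition, such an $A$ can be covered, for any prescribed $\eps>0$, by finitely many sets of diameter less than $\eps$. Combined with the hypothesis $\alpha(F_n)\to 0$, this reduces everything to a Cauchy-subsequence extraction plus the standard "complete plus totally bounded equals compact" fact.

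For non-emptiness, I would pick an arbitrary $x_n\in F_n$ for every $n\geq 1$ and show that the sequence $(x_n)$ has a Cauchy subsequence. For each integer $k\geq 1$, choose $N_k$ with $\alpha(F_{N_k})<1/k$; then $F_{N_k}$ is covered by finitely many sets of diameter less than $1/k$, and since the decreasing property of $(F_n)$ ensures that $x_n\in F_{N_k}$ for every $n\geq N_k$, the pigeonhole principle produces an infinite set of indices $n\geq N_k$ whose corresponding $x_n$'s lie in a common set of diameter less than $1/k$. A standard diagonal extraction then yields a subsequence $(y_k)$ of $(x_n)$ satisfying $d(y_n,y_m)<1/k$ whenever $n,m\geq k$, hence Cauchy. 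By completeness $y_k\to x$ for some $x\in M$; since every $F_n$ is closed and $y_k\in F_n$ for all sufficiently large $k$, we conclude $x\in\bigcap_n F_n=F_\infty$.

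For compactness, I would note that $F_\infty\subset F_n$ for every $n$ implies, by monotonicity of the Kuratowski measure, that $\alpha(F_\infty)\leq\alpha(F_n)\to 0$, so $\alpha(F_\infty)=0$, i.e., $F_\infty$ is totally bounded. Being an intersection of closed subsets of the complete metric space $M$, the set $F_\infty$ is itself closed and hence complete. A complete totally bounded metric space is compact, which finishes the argument.

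The only mildly delicate step is the diagonal extraction that turns the sequence of layered pigeonhole refinements into a genuine Cauchy subsequence; this is a classical technique rather than a real obstacle, and the rest of the argument is an unpacking of the definition of $\alpha$ together with the standard completeness-and-total-boundedness characterization of compactness.
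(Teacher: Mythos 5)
Your proof is correct. Note that the paper does not actually prove this lemma: it is Kuratowski's classical generalization of the Cantor intersection theorem, and the authors simply cite it to the literature (Bana\'s--Mursaleen). Your argument is the standard one from that source: total boundedness of $F_{N_k}$ plus the nesting of the $F_n$ lets you run a pigeonhole-and-diagonal extraction to produce a Cauchy subsequence of any selection $x_n\in F_n$, whose limit lies in every (closed) $F_n$; and compactness of $F_\infty$ follows from monotonicity of $\alpha$ together with the fact that a closed, totally bounded subset of a complete space is compact. The only point worth making explicit in a written version is that the pigeonhole refinements must be chosen nested (each infinite index set inside the previous one) before diagonalizing, which you gesture at and which poses no difficulty.
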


Let us first proof a pointwise version of the main result.

\begin{thm}
  \label{thm:pointwise-weakalpha-nodelta}
  Let $X$ be a Banach space and $x \in S_X$.
  If there exists $x_0^* \in D(x)$ such that
  $\lim_{\delta \to 0} \alpha(S(x_0^*,\delta)) = 0$,
  then $x$ is not a $\Delta$-point.
\end{thm}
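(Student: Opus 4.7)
The plan is first to exploit the hypothesis to identify the face
$F := \{y \in B_X : x_0^*(y) = 1\}$ of $B_X$ (which contains $x$ since $x_0^* \in D(x)$) as a norm-compact object. The closed slices $\overline{S}(x_0^*,1/n)$ form a decreasing sequence with intersection $F$, and their Kuratowski indices still tend to $0$ (the measure of non-compactness is invariant under taking closures), so Lemma~\ref{lem:Kuratowski} ensures that $F$ is a non-empty norm-compact (convex) subset of $B_X$. A routine extraction argument based on the same $\alpha$-smallness additionally yields a Hausdorff-type approximation: for every $\eta > 0$ there exists $\delta > 0$ such that $\overline{S}(x_0^*,\delta) \subseteq F + \eta B_X$.

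Next I would split according to the value of $M := \max_{y \in F}\|x-y\|$, which is attained since $F$ is compact. If $M < 2$, the approximation above applied with $\eta = (2-M)/2$ yields $\delta > 0$ such that $S(x_0^*,\delta)$ contains $x$ and every element of it lies at distance at most $(M+2)/2 < 2$ from $x$, directly ruling out $x$ as a $\Delta$-point.

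The delicate case is $M = 2$. The set $B := \{y \in F : \|x - y\| = 2\}$ is then a non-empty compact subset of $F$. For every $y \in B$, Hahn--Banach produces a functional $v_y^* \in D(x)$ with $v_y^*(y) = -1$; then $U_y := \{z \in F : v_y^*(z) < 0\}$ is a relatively open neighborhood of $y$ in $F$, so by compactness of $B$ I can extract a finite subcover $B \subseteq \bigcup_{j=1}^N U_{y_j}$. I would next average to form
\begin{equation*}
  w^* := \frac{1}{N+1}\Big(x_0^* + \sum_{j=1}^N v_{y_j}^*\Big),
\end{equation*}
which belongs to $D(x)$ by convexity of $D(x)$. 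On each $z \in B$ one of the $v_{y_j}^*(z)$ is negative while the remaining summands are bounded by $1$ and $x_0^*(z) = 1$, giving $w^*(z) \le 1 - \frac{1}{N+1}$. Since $B = \bigcap_{\eta>0} B_\eta$ with $B_\eta := \{z \in F : \|x-z\| \ge 2-\eta\}$, a standard compactness argument upgrades this to an $\eta > 0$ with $w^*(z) \le 1 - \frac{1}{2(N+1)}$ throughout $B_\eta$.

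To conclude I would combine these estimates with Lemma~\ref{intersection_lemma}. Choosing $\delta>0$ small enough that $S(x_0^*,\delta) \subseteq F + \eta_0 B_X$ with $\eta_0 + \frac{\delta}{N+1} < \frac{1}{2(N+1)}$ and $\eta_0 < \eta/2$, every $y \in S(w^*,\delta/(N+1)) \subseteq S(x_0^*,\delta)$ lies within $\eta_0$ of some $y' \in F$ and satisfies $w^*(y') > 1 - \frac{1}{2(N+1)}$; the bound on $B_\eta$ then forces $y' \notin B_\eta$, so $\|x-y'\| < 2 - \eta$ and thus $\|x-y\| < 2 - \eta/2 < 2$. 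Since $w^*(x) = 1$, the slice $S(w^*,\delta/(N+1))$ contains $x$, witnessing that $x$ is not a $\Delta$-point. The main obstacle is the case $M = 2$: identifying the compact face $F$ is immediate from the hypothesis and Kuratowski's lemma, but constructing a functional in $D(x)$ that \emph{uniformly} separates $x$ from the distance-two companions of $x$ in $F$ relies essentially on the compactness of $B$ to reduce a potentially infinite separation problem to a finite one that can be resolved by Hahn--Banach plus averaging.
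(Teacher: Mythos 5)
Your proof is correct, but it is organized quite differently from the paper's. The paper argues by contradiction: assuming $x$ is a $\Delta$-point, it applies Lemma~\ref{lem:Kuratowski} to the sets $\overline{S}(f,\delta_n)\cap\Delta_{\delta_n}(x)$ for \emph{every} averaged functional $f=\frac{x^*+x_0^*}{2}$ with $x^*\in D(x)$, uses Lemma~\ref{intersection_lemma} to get the finite intersection property of the resulting compact sets $F^f$, and extracts a single point $z$ with $\norm{x-z}=2$ and $f(z)=1$ for all such $f$ --- which is impossible because any functional norming $x-z$ lies in $D(x)$ and takes the value $-1$ at $z$. You instead work directly: you use Kuratowski's lemma only once, to show that the face $F=\set{y\in B_X: x_0^*(y)=1}$ is compact and is Hausdorff-approximated by the slices $\overline{S}(x_0^*,\delta)$, and then, in the only nontrivial case $M=2$, you run a finite-subcover argument on the compact set $B$ of distance-two companions, average the resulting Hahn--Banach functionals with $x_0^*$, and exhibit an explicit slice containing $x$ whose elements all lie at distance at most $2-\eta/2$ from $x$. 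The two proofs share the same three ingredients (Kuratowski's lemma, Lemma~\ref{intersection_lemma} applied to averages over $D(x)$, and the observation that a functional norming $x-z$ with $\norm{x-z}=2$ belongs to $D(x)$ and equals $-1$ at $z$), but yours is constructive and quantitative --- it produces a concrete witness slice and an explicit $\eps$ --- at the cost of the extra Hausdorff-approximation step and the case split on $M$, whereas the paper's contradiction argument handles the whole (possibly uncountable) family $D_0(x)$ at once and dispenses with both. Two small points worth tightening if you write this up: justify the claim $\alpha(\overline{S}(x_0^*,\delta))\to 0$ via $\overline{S}(x_0^*,\delta)\subset S(x_0^*,\delta')$ for $\delta'>\delta$ rather than by ``invariance under closures'' (the closed slice need not be the closure of the open one), and spell out the extraction behind the inclusion $\overline{S}(x_0^*,\delta)\subseteq F+\eta B_X$ (it is another application of Lemma~\ref{lem:Kuratowski} to the closures of tails of a putative sequence staying $\eta$-far from $F$).
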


\begin{proof}
  Let $x \in S_X$ be such that
  there exists $x_0^* \in D(x)$ with
  $\lim_{\delta \to 0} \alpha(S(x_0^*,\delta)) = 0$ and let us 
  assume for contradiction that $x$ is a $\Delta$-point.

  Define a set of functionals norming $x$ by
  \begin{equation*}
    D_0(x) := \set{
    \frac{x^* + x_0^*}{2} : x^* \in D(x)
    }.
  \end{equation*}
  If $\delta > 0$ and $f \in D_0(x)$, then
  by Lemma~\ref{intersection_lemma}
  \begin{equation*}
    S\left(f,\frac{\delta}{2}\right)
    =
    S\left(\frac{x^* + x_0^*}{2},\frac{\delta}{2}\right)
    \subset
    S(x^*,\delta) \cap S(x_0^*,\delta)
  \end{equation*}
  and therefore $\lim_{\delta \to 0} \alpha(\overline{S}(f,\delta))
  = \lim_{\delta \to 0} \alpha(S(f,\delta)) = 0$
  for all $f \in D_0(x)$.

  Given $f \in D_0(x)$ and a sequence
  $(\delta_n) \subseteq (0,1)$ decreasing to $0$
  we define for each $n$
  \begin{equation*}
    F_n^{f} := \overline{S}(f,\delta_n) \cap \Delta_{\delta_n}(x).
  \end{equation*}
  Each $F_n^{f}$ is a closed non-empty subset of $B_X$
  with $\alpha(F_n^{f}) \to 0$ hence
  $F^f = \cap_n F_n^f$ is non-empty and compact
  for every $f \in D_0(x)$ by Lemma~\ref{lem:Kuratowski}. Using a compactness argument we can then prove the following.
  
  \bigbreak

  \textbf{Claim.}
  \begin{equation*}
    F := \bigcap_{f \in D_0(x)} F^f \neq \emptyset.
  \end{equation*}
  
  \bigbreak

  Before proving this claim let us see that it will
  give us the desired conclusion.
  Indeed, if $z \in F$, then $\|x - z\| = 2$ and
  for all $f \in D_0(x)$ we must have $f(z) = 1$,
  in particular $x^*_0(z) = 1$.
  But this is nonsense since for any $x^* \in S_{X^*}$
  with $x^*(x-z) = 2$ we must have $x^*(x) = 1$
  and $x^*(z) = -1$, so
  \begin{equation*}
    1 = f_{x^*}(x)=\frac{x^* + x^*_0}{2}(z)
    = \frac{-1 + 1}{2} = 0.
  \end{equation*}
  This contradiction shows that $x$ is not a $\Delta$-point.

  To finish the proof we only need to prove the
  claim that $F \neq \emptyset$. It is enough
  to show that $(F^{f})_{f \in D_0(x)}$
  has the finite intersection property
  since all the sets $F^f$ are compact and non-empty.

  Let $f_1,\ldots,f_k \in D_0(x)$,
  which means $f_j = \frac{x_j^* + x_0^*}{2}$ for
  $x^*_j \in D(x)$.
  Define
  \begin{equation*}
    f = \frac{1}{k} \sum_{j=1}^k f_j
    = \frac{x_0^* + \frac{1}{k}\sum_{j=1}^k x^*_j}{2}
  \end{equation*}
  Clearly $f \in D_0(x)$.
  By Lemma~\ref{intersection_lemma}
  we have for all $\delta > 0$
  \begin{equation*}
    S\left(f,\frac{\delta}{k}\right)
    \subset \bigcap_{j=1}^k S(f_j,\delta)
  \end{equation*}
  and hence
  \begin{equation}\label{eq:nuc-int}
    S\left(f,\frac{\delta}{k}\right)
    \cap \Delta_{\frac{\delta}{k}}(x)
    \subset
    \bigcap_{j=1}^k S(f_j,\delta)
    \cap
    \Delta_\delta(x).
  \end{equation}
  Since $(\delta_n)$ is decreasing there must for any $n$
  exist $m$ with $\delta_m < \delta_n/k$.
  By \eqref{eq:nuc-int} we get
  \begin{equation*}
    F_m^{f}
    \subset \bigcap_{j=1}^k F_n^{f_j}
  \end{equation*}
  and hence by commutativity of intersections
  \begin{equation*}
    \emptyset \neq \bigcap_n F_n^{f}
    \subseteq
    \bigcap_{j=1}^k
    \bigcap_{n} F_n^{f_j}
    =
    \bigcap_{j=1}^k F^{f_j}
  \end{equation*}
  and the claim is proved.
\end{proof}

From Theorem~\ref{thm:pointwise-weakalpha-nodelta}
we immediately get
\begin{thm}\label{thm:prop_alpha_no_Delta}
  If $X$ has Rolewicz' property $(\alpha)$, then $X$
  does not have $\Delta$-points.
\end{thm}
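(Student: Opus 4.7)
The plan is to deduce this theorem as a direct consequence of the pointwise criterion in Theorem~\ref{thm:pointwise-weakalpha-nodelta}. Since all the substantive work has already been done there, what remains is to observe that for any unit vector of $X$ one can feed the pointwise theorem with an appropriate supporting functional coming from Hahn--Banach.

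More precisely, I would fix an arbitrary $x \in S_X$ and use Hahn--Banach to pick some $x_0^* \in D(x) \subseteq S_{X^*}$. Property $(\alpha)$, applied to this particular $x_0^*$, asserts that for every $\varepsilon > 0$ there is a $\delta_\varepsilon > 0$ with $\alpha(S(x_0^*,\delta_\varepsilon)) \le \varepsilon$. Since the slice shrinks as $\delta$ decreases, the map $\delta \mapsto \alpha(S(x_0^*,\delta))$ is monotone, so this is equivalent to
\begin{equation*}
  \lim_{\delta \to 0} \alpha(S(x_0^*,\delta)) = 0.
\end{equation*}
This is exactly the hypothesis of Theorem~\ref{thm:pointwise-weakalpha-nodelta}, which then yields that $x$ is not a $\Delta$-point. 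As $x \in S_X$ was arbitrary, $X$ contains no $\Delta$-points.

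There is essentially no obstacle in carrying this out: the hard part, namely manufacturing, via Lemma~\ref{intersection_lemma} and a Kuratowski-style finite intersection argument, a point of distance $2$ from $x$ in every slice forced to collapse to a singleton, has been absorbed into the pointwise theorem. The present deduction is simply the observation that property $(\alpha)$ is a uniform version of the pointwise hypothesis which, once evaluated at a norming functional for $x$, becomes precisely what the pointwise theorem needs.
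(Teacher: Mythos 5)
Your deduction is correct and is exactly how the paper obtains this theorem: the paper states that it follows "immediately" from Theorem~\ref{thm:pointwise-weakalpha-nodelta}, and your argument (pick $x_0^*\in D(x)$ by Hahn--Banach, use monotonicity of $\alpha$ under inclusion of slices to convert property $(\alpha)$ into $\lim_{\delta\to 0}\alpha(S(x_0^*,\delta))=0$) supplies the intended details.
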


As we noted in Section~\ref{sec:preliminaries}
a Banach space $X$ is reflexive and AUC
if and only if it has uniform property $(\alpha)$.
Also finite-dimensional spaces are trivially AUC
since for example $\alpha(S(x^*,\delta)) = 0$
for slices of $B_X$ in finite-dimensional spaces.

\begin{thm}\label{thm:reflAUC_no_Delta}
  If $X$ is reflexive and AUC, then $X$ does not have $\Delta$-points.

  In particular, if $X$ is finite-dimensional
  then $X$ does not have $\Delta$-points.
\end{thm}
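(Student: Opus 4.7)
The plan is to reduce the statement to the already established Theorem~\ref{thm:prop_alpha_no_Delta} by invoking the fact recorded in the preliminaries that reflexivity together with AUC is equivalent to uniform property $(\alpha)$ in the sense of Rolewicz. I would proceed in two short steps.

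First, I would observe that if $X$ is reflexive and AUC, then by Rolewicz's theorem (cited in Section~\ref{sec:preliminaries}) the space $X$ has uniform property $(\alpha)$. In particular, for every $\varepsilon>0$ there is a single $\delta>0$ such that $\alpha(S(x^*,\delta))\leq\varepsilon$ for every $x^*\in S_{X^*}$; taking this $x^*$ inside $D(x)$ for any prescribed $x\in S_X$ shows that $X$ has Rolewicz' property $(\alpha)$. Hence Theorem~\ref{thm:prop_alpha_no_Delta} applies and rules out $\Delta$-points in $X$.

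For the finite-dimensional case, I would note that closed bounded subsets of a finite-dimensional space are compact, so $\alpha(S(x^*,\delta))=0$ for every slice and every $\delta>0$. Thus $X$ trivially has Rolewicz' property $(\alpha)$, and the conclusion again follows from Theorem~\ref{thm:prop_alpha_no_Delta}. Alternatively, one can invoke Theorem~\ref{thm:pointwise-weakalpha-nodelta} directly: for any $x\in S_X$ and any norming functional $x_0^*\in D(x)$, compactness gives $\lim_{\delta\to 0}\alpha(S(x_0^*,\delta))=0$, hence $x$ cannot be a $\Delta$-point.

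There is no real obstacle here; the whole point is that the work has been done in Theorem~\ref{thm:prop_alpha_no_Delta} and in the preparatory discussion identifying uniform property $(\alpha)$ with reflexivity plus AUC. If one wanted to make the proof self-contained (not relying on the cited result of Rolewicz), one would instead recall that reflexive AUC spaces are NUC, deduce that $\alpha(S(x^*,\delta))\to 0$ uniformly in $x^*\in S_{X^*}$ as $\delta\to 0$ directly from the definition of $\bar{\delta}_X(t)$, and then apply Theorem~\ref{thm:pointwise-weakalpha-nodelta} pointwise. Either route yields the statement in a few lines.
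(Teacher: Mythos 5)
Your proposal is correct and follows essentially the same route as the paper: the paper likewise derives the result from the equivalence of reflexivity plus AUC with uniform property $(\alpha)$ (hence Rolewicz' property $(\alpha)$) together with Theorem~\ref{thm:prop_alpha_no_Delta}, and handles the finite-dimensional case by noting that slices there have Kuratowski measure zero. Nothing further is needed.
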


\begin{rem}
  Let $X$ be a Banach space such that for every
  $x \in S_X$ there exists $x^* \in D(x)$ with
  $\lim_{\delta \to 0} \alpha(S(x^*,\delta)) = 0$.
  Then by Theorem~\ref{thm:pointwise-weakalpha-nodelta}
  $X$ does not admit a $\Delta$-point.
  Note that unlike Rolewicz' property $(\alpha)$ (see \cite{MR879413})
  this property does not imply reflexivity.

  Indeed, every separable Banach space has an equivalent
  locally uniformly rotund renorming and
  if (the norm of) $X$ is locally uniformly rotund,
  then every $x \in S_X$ is strongly exposed by $x^* \in D(x)$
  so that for all $\varepsilon > 0$ there exists
  $\delta > 0$ such that $S(x^*,\delta)$ has
  diameter less than $\varepsilon$.
  In particular, $\alpha(S(x^*,\delta)) < \varepsilon$.
\end{rem}

Using the duality AUS/AUC in reflexive spaces we can in fact combine
Theorem~\ref{thm:AUS_no_delta_points} and \ref{thm:reflAUC_no_Delta}.

\begin{cor}
  If $X$ is reflexive and AUC, then neither $X$ nor $X^*$ admit $\Delta$-points.
\end{cor}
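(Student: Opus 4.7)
The plan is to combine the two main theorems of Sections~\ref{sec:asympt-unif-smoothn} and \ref{sec:asympt-near-unif} via the AUS/AUC duality recalled in Section~\ref{sec:preliminaries}. The statement for $X$ itself is nothing but a restatement of Theorem~\ref{thm:reflAUC_no_Delta}, so the only work is to handle the dual space $X^*$.

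For the dual, I would first argue that $X^*$ is AUS. Since $X$ is reflexive, so is $X^*$, and we have $X^{**}=X$. Applying the reflexive duality ``$Y$ is AUS if and only if $Y^*$ is AUC'' (recalled in Section~\ref{sec:preliminaries}) to $Y=X^*$, this is equivalent to asking that $X$ be AUC, which is our hypothesis. Thus $X^*$ is AUS.

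Once $X^*$ is known to be AUS, Theorem~\ref{thm:AUS_no_delta_points} applied to $X^*$ immediately yields that $X^*$ admits no $\Delta$-points, completing the proof.

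There is essentially no obstacle here: both ingredients (the AUS/AUC duality under reflexivity and the two non-existence theorems) have already been established, and the argument is a one-line assembly of these facts. The only point worth being careful about is that the duality statement in Section~\ref{sec:preliminaries} requires reflexivity, which is given; without reflexivity one would only get the weak$^*$-version AUC$^*$ for $X^*$, which would not directly plug into Theorem~\ref{thm:AUS_no_delta_points}.
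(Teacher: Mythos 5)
Your proposal is correct and is exactly the argument the paper intends: the corollary is stated right after the remark that one can ``combine Theorem~\ref{thm:AUS_no_delta_points} and \ref{thm:reflAUC_no_Delta}'' using the AUS/AUC duality in reflexive spaces, which is precisely your application of the duality to $Y=X^*$ to conclude that $X^*$ is AUS. Your cautionary remark about needing reflexivity (as opposed to the mere AUC$^*$ statement) is also on point.
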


In particular we can apply this result to the Baernstein's space $B$ whose construction and basic properties are given in the introductory Chapter 0 of \cite[Construction 0.9]{CS}. This space was originally introduced in \cite{Bae} as an example of a reflexive space failing the Banach--Saks property. It is known to have a normalized (unconditional) basis with block lower $\ell_2$ estimates and thus to be $2$-AUC (see \cite[Theorem 3]{Partington} with the NUC terminology). Also the optimal modulus of near convexity of $B$ has been estimated in \cite{BOS}. From our preceding results the space $B$ and its dual space $B^*$ both fail to have $\Delta$-points.

Here is a pointwise application of Theorem~\ref{thm:pointwise-weakalpha-nodelta}.

\begin{cor}\label{cor:dual_auc_not_delta}
  Let $X$ be a Banach space and let $x^*\in S_{X^*}$ be a norm one
  functional which attains its norm at some $x\in S_X$.
  If $\lim_{t \to 0} \frac{\overline{\rho}(t,x)}{t} = 0$,
  then $x^*$ is not a $\Delta$-point.
\end{cor}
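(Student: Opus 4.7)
The plan is to apply Theorem~\ref{thm:pointwise-weakalpha-nodelta} inside the dual space $X^*$, using $x$ (more precisely its canonical image in $X^{**}$) as the distinguished norming functional of $x^*$. Concretely, I will deduce smallness of the Kuratowski measure of the weak$^*$ slices of $B_{X^*}$ cut out by $x$ from the pointwise asymptotic smoothness hypothesis, and then feed that directly into the pointwise no-$\Delta$-point theorem applied to the space $X^*$.

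First I would verify the hypothesis of Theorem~\ref{thm:pointwise-weakalpha-nodelta} for the element $x^* \in S_{X^*}$ seen as a point of the space $X^*$. Since $x^*(x) = 1 = \|x^*\| = \|x\|$, the canonical image $\hat x \in S_{X^{**}}$ lies in $D(x^*)$. Moreover, the slice of $B_{X^*}$ defined by $\hat x$ is exactly
\begin{equation*}
  S(\hat x, \delta)
  = \{ y^* \in B_{X^*} : y^*(x) > 1 - \delta \},
\end{equation*}
which is precisely the slice $S(x,\delta)$ of $B_{X^*}$ considered in Proposition~\ref{prop:optimized_aus_kuc}.

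Next, given the assumption $\lim_{t\to 0} \bar\rho_X(t,x)/t = 0$, I would invoke Proposition~\ref{prop:optimized_aus_kuc}: for every $\varepsilon > 0$ one can find $t > 0$ with $\bar\rho_X(t,x)/t < \varepsilon$, and hence some $\delta > 0$ such that $\alpha(S(x,\delta)) < 2\varepsilon$. Together with the monotonicity $\alpha(S(x,\delta')) \leq \alpha(S(x,\delta))$ for $\delta' \leq \delta$, this gives $\lim_{\delta \to 0} \alpha(S(\hat x,\delta)) = 0$.

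Finally, Theorem~\ref{thm:pointwise-weakalpha-nodelta} applied to the Banach space $X^*$, to the point $x^* \in S_{X^*}$, and to the norming functional $\hat x \in D(x^*) \subset S_{X^{**}}$ yields that $x^*$ is not a $\Delta$-point of $X^*$, as required. There is no real obstacle in this argument; the only points requiring a little care are noting that Theorem~\ref{thm:pointwise-weakalpha-nodelta} is stated for arbitrary Banach spaces and therefore applies verbatim to $X^*$, and correctly identifying the slice $S(\hat x,\delta)$ of $B_{X^*}$ with the weak$^*$ slice $S(x,\delta)$ appearing in Proposition~\ref{prop:optimized_aus_kuc}.
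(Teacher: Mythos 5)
Your proof is correct and follows essentially the same route as the paper: the paper simply cites Corollary~\ref{cor:local_aus_kuc} (which is exactly your combination of Proposition~\ref{prop:optimized_aus_kuc} with the monotonicity of $\alpha$ on nested slices) to get $\lim_{\delta\to 0}\alpha(S(x,\delta))=0$, and then applies Theorem~\ref{thm:pointwise-weakalpha-nodelta} in $X^*$ with $\hat x\in D(x^*)$ as the norming functional. Your extra care in identifying $S(\hat x,\delta)$ with the weak$^*$ slice $S(x,\delta)$ is a welcome explicit check that the paper leaves implicit.
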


\begin{proof}
  By Corollary~\ref{cor:local_aus_kuc} we have
  $\lim_{\delta\to 0} \alpha(S(x,\delta)) = 0$
  and the conclusion follows directly from
  Theorem~\ref{thm:pointwise-weakalpha-nodelta} since $x\in D(x^*)$.
\end{proof}

Let $X$ be a Banach space.
An element $x \in B_X$ is said to be a \emph{quasi-denting point}
if given $\varepsilon > 0$, there exists $x^* \in S_{X^*}$ and
$\delta > 0$ with $x \in S(x^*,\delta)$ such that
$\alpha(S(x^*,\delta)) < \varepsilon$.
Recall that $x$ is a \emph{denting-point} if the above
can be strengthened to $S(x^*,\delta) \subset B(x,\varepsilon)$.
In dual spaces one can similarly define weak$^*$ (quasi-)denting
points by requiring $x^*$ to be weak$^*$ continuous.
Giles and Moors \cite{MR1188888} introduced quasi-denting points
(under the name $\alpha$-denting points)
see e.g. \cite{MR1277910} or \cite{MR1303494}.

Let $X$ be a Banach space such that the dual is AUC$^*$,
then every $x^* \in S_{X^*}$ is a quasi-denting point.
This is essentially contained in
e.g. \cite[Proposition~4.8]{MR2340479},
but we include the straightforward argument.
The AUC$^*$ is equivalent to the following version of UKK$^*$:
For every $\varepsilon > 0$ there exists $\delta > 0$
such that if $x^* \in B_{X^*}$ and all weak$^*$ neighborhoods
$V$ of $x^*$ satisfy $\diam(V \cap B_{X^*}) > \varepsilon$,
then $\|x^*\| \le 1 - \delta$.
Now fix $\varepsilon > 0$ and choose $\delta > 0$ as above.
If $x^* \in S_{X^*}$, then we can find $x \in S_X$ with
$x^*(x) > 1 - \delta$. Let $0 < \delta' < \delta$.
If $y^* \in S(x,\delta)$, then $\|y^*\| > 1 - \delta$
and the exits a weak$^*$ open neighborhood $V_{y^*}$ of $y^*$
with $\diam(V_{y^*} \cap B_{X^*}) \le \varepsilon$.
We therefore have an open cover of the weak$^*$ compact
set $\bar{S}(x,\delta')$ and by compactness we have
a finite cover and hence $x^*$ is a weak$^*$ quasi-denting point.

If the dual $X^*$ is AUC$^*$, then $X$ is AUS and by
Corollary~\ref{cor:dual_auc_not_delta} no norm-attaining
$x^* \in S_{X^*}$ can be a $\Delta$-point, but we do not know if a
weak$^*$-quasi-denting point, or more generally a quasi-denting point,
can be a Daugavet-point or a $\Delta$-point. For non-reflexive AUC
spaces, we do not even know if every element of the unit sphere is
quasi-denting.

\section{The James tree space}
\label{sec:james-tree-space}

Let $T=\{\emptyset\}\cup \bigcup_{n\geq 1} \{0,1\}^n$ be the infinite
binary tree and let us denote by $\leq$ the natural ordering on $T$.
A totally ordered subset $S$ of $T$ is called a segment if it
satisfies:
\begin{equation*}
  \forall s,t\in S,\ [s,t]=\{u\in T: s\leq u\leq t\} \subset S.
\end{equation*}
An infinite segment of $T$ is also called a branch of $T$. 
Let us denote by $\mathcal{F}$ the set of all
finite families of disjoint segments in $T$.
The James tree space $JT$ is define as follows:
\begin{equation*}
  JT = \left\{ x = (x_s)_{s\in T}
    \subset \Real:\ \norm{x}_{JT}^2= \sup_{F\in \mathcal{F}}
    \sum_{S\in F} x_S^2 <\infty \right\}
\end{equation*}
where $x_S = \sum_{s\in S}x_s$ for every non-empty segment $S$ and
$x_\emptyset = 0$.

It is well known that $JT$ is a Banach space and that the set of canonical
unit vectors $\{e_t\}_{t\in T}$ of $c_{00}(T)$ forms, for the
lexicographic order, a normalized monotone boundedly complete basis of
$JT$. Moreover, it is also known that the closed linear span
$JT_* = [e_t^*]_{t\in T}$ of the set of biorthogonal functionals in
$JT^*$ is a unique isometric predual of $JT$. If $S$ is a segment of
$T$ or if $\beta$ is a branch of $T$ we will sometimes refer to the set $\{e_s\}_{s\in S}$ as a segment of $JT$ and the set $\{e_t\}_{t\in \beta}$ as a branch of $JT$.
 From the definition of
the norm, it is easy to show that $\norm{x+y}_{JT}^2 \geq
\norm{x}_{JT}^2+\norm{y}_{JT}^2$ whenever $x,y\in JT$ have totally disconnected
supports (that is if $\conv(\supp_T x)\cap \conv(\supp_T
y)=\emptyset$). Since we are working with the lexicographic order on
$T$, this applies whenever $\supp x < \supp y$ with respect to the
ordering of the basis, and  $\{e_t\}_{t\in T}$ thus satisfies block
lower $\ell_2$ estimates.  It follows that $JT$ is $2$-AUC$^*$ and
Theorem~\ref{thm:AUS_no_delta_points} applied to the $2$-AUS $JT_*$
yields the following result.

\begin{thm}
  The predual $JT_*$ of the James tree space does not admit $\Delta$-points.
\end{thm}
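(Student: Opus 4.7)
The plan is to show that $JT_*$ is asymptotically uniformly smooth (AUS), so that Theorem~\ref{thm:AUS_no_delta_points} immediately yields the conclusion. The route is the duality principle recalled at the end of Section~\ref{sec:asympt-unif-smoothn}: if the canonical boundedly complete basis $\{e_t\}_{t\in T}$ of $JT$ satisfies block lower $\ell_2$ estimates, then by \cite[Corollary~2.4]{MR3742968} the biorthogonal system spanning $JT_* = [e_t^*]_{t\in T}$ satisfies block upper $\ell_2$ estimates, and hence $JT_*$ is AUS with power type $2$.

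The heart of the argument is the super-additivity property
\[
\|x+y\|_{JT}^2 \ge \|x\|_{JT}^2 + \|y\|_{JT}^2
\]
whenever $\conv(\supp_T x) \cap \conv(\supp_T y) = \emptyset$. This follows directly from the definition of the norm: given near-optimal families of disjoint segments $F_x$ for $x$ and $F_y$ for $y$, I would produce an admissible disjoint family for $x+y$ by splitting, when necessary, any segment of $F_x \cup F_y$ that crosses the gap between the two convex hulls of the supports into its two pieces; each resulting piece contributes fully to either $x$ or $y$ but not both, so the total squared contribution is at least $\|x\|_{JT}^2 + \|y\|_{JT}^2$. To upgrade this pointwise estimate to the block lower $\ell_2$ estimate for the basis, I would verify that consecutive finitely supported blocks $x_1 < x_2 < \cdots < x_N$ with respect to the lexicographic order on $T$ have pairwise totally disconnected supports in the tree sense, and then iterate the super-additivity to get $\|\sum_{n} x_n\|_{JT}^2 \ge \sum_n \|x_n\|_{JT}^2$.

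The main obstacle, in my view, is the last verification: the tree's partial order and the basis's total order interact in a subtle way, so one has to be careful that the chosen traversal really does prevent a segment from bridging the supports of two consecutive blocks. Once this point is settled the rest is bookkeeping, and the chain AUC$^*$ for $JT$ $\Longrightarrow$ AUS for $JT_*$ $\Longrightarrow$ no $\Delta$-points in $JT_*$ (via Theorem~\ref{thm:AUS_no_delta_points}) closes the argument.
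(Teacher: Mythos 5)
Your proposal is correct and follows essentially the same route as the paper: establish the super-additivity $\norm{x+y}_{JT}^2\ge\norm{x}_{JT}^2+\norm{y}_{JT}^2$ for tree-disconnected supports, deduce block lower $\ell_2$ estimates for the basis of $JT$ (hence $JT$ is $2$-AUC$^*$ as the dual of $JT_*$, equivalently $JT_*$ is $2$-AUS), and conclude via Theorem~\ref{thm:AUS_no_delta_points}. The point you flag as the main obstacle does go through: the lexicographic basis order is a linear extension of the tree order, so for consecutive blocks $\supp x<\supp y$ no segment can contain a node of $\conv(\supp_T x)$ above a node of $\conv(\supp_T y)$, and your splitting argument closes the gap.
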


Let us also emphasize that $\norm{x}_{JT}\geq \norm{x}_{\ell_2}$ for every $x\in JT$.  
It is also worth mentioning that if one consider the equivalent norm
$\norm{x}^2 = \sup_{S_1<\dots<S_n} \sum_{i=1}^n x_{S_i}^2$ on the James
space $J$, where the $S_i$ are segments of $\Natural$, then one
obtains a Banach space isometric to the closed linear span
$[e_t]_{t\in \beta}$ of any branch of $JT$. In particular all the
results we will obtain for the space $JT$ will also apply to
$(J,\norm{.})$.

As we see from Proposition~\ref{prop:unif_nsq_char}
the unit ball of spaces that are not uniformly non-square
contain small slices of diameter arbitrary close to $2$.
Let us start by illustrating this
by exhibiting slices of diameter $2$ in JT.

\begin{prop}
  For every $\delta>0$ we can find some $x^*\in S_{JT^*}$ such that
  $\diam S(x^*,\delta)=2$ and the diameter is attained.
\end{prop}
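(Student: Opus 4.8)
The plan is to exhibit, for each $\delta > 0$, an explicit norm-one functional $x^* \in S_{JT^*}$ for which $S(x^*,\delta)$ has two points at distance exactly $2$. Because $JT$ is the dual of $JT_*$ and the basis $\{e_t\}_{t\in T}$ is boundedly complete, the natural candidates live on a single branch $\beta$ of the tree, where $[e_t]_{t\in\beta}$ is isometric to the James-type space $(J,\|\cdot\|)$ mentioned just before the statement. First I would pick a branch $\beta = \{t_1 < t_2 < t_3 < \cdots\}$ and consider the summing-type functional associated to an initial segment: for a large integer $N$ (to be chosen in terms of $\delta$) set $x^* = \sum_{i=1}^{N} e_{t_i}^*$, or rather its normalization. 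One checks from the definition of $\|\cdot\|_{JT}$ (really from the dual norm computation, or equivalently from the fact that the segment $S = \{t_1,\dots,t_N\}$ gives $x_S = \sum_{i\le N} x_{t_i}$ as one of the competing sums in $\|x\|_{JT}^2 = \sup_F \sum_{S\in F} x_S^2$) that $\|x^*\|_{JT^*} = 1$, so no normalization constant is actually needed — this is the ``summing functional on a segment'' and it has norm one.

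Next I would produce the two witnessing elements. Let $S = \{t_1,\dots,t_N\}$ be the segment of length $N$ along $\beta$ and consider $u = \sum_{i=1}^N a_i e_{t_i}$ and $v = \sum_{i=1}^N a_i e_{t_i}$ with alternating-type coefficients: concretely take $u$ supported on the ``odd'' levels and $v = -u'$ where $u'$ is a suitably chosen vector supported on disjoint parts of the segment, arranged so that $x^*(u)$ and $x^*(v)$ are both close to $1$ (hence $u,v \in S(x^*,\delta)$ once $N$ is large enough relative to $\delta$) while $\|u-v\|_{JT} = 2$. The key point making this work, and the reason one goes to $JT$ rather than a uniformly non-square space, is that on a single segment the norm essentially collapses to an $\ell_\infty$-type behaviour for the partial-sum functional: vectors whose segment-sums along $S$ are all controlled but whose ``local'' oscillations are large can be far apart. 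I would choose $u$ and $-u$ with $u = e_{t_1} - e_{t_2} + \cdots$ telescoping so that every initial segment sum $\sum_{i\le k} a_i \in \{0,1\}$, giving $\|u\|_{JT} = 1$, $x^*(u) = \sum a_i$ close to $1/2$ — then rescale/shift so the functional value lands in $(1-\delta,1]$; alternatively translate the construction by a fixed unit vector outside the segment to boost the $x^*$-value to near $1$ while keeping $\|u - v\|_{JT}$ at $2$ and both vectors in $B_{JT}$.

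The main obstacle I anticipate is the bookkeeping that simultaneously forces (i) $u,v \in B_{JT}$, i.e. controlling $\sup_{F}\sum_{S\in F} x_S^2$ over \emph{all} finite families of disjoint segments, not just segments inside $\beta$ — but this is handled because $\supp u,\supp v \subseteq \beta$ and any segment meets $\beta$ in a sub-segment, so the $JT$-norm of vectors supported on a branch equals their $J$-norm; (ii) $x^*(u), x^*(v) > 1-\delta$; and (iii) $\|u-v\|_{JT} = 2$ with the diameter \emph{attained} (hence the claim that it equals $2$, not merely approaches it). For (iii) one produces a functional $g \in S_{JT^*}$, again of summing type on a possibly longer segment, with $g(u) = 1$ and $g(v) = -1$, so that $\|u - v\|_{JT} \ge g(u-v) = 2$, while $\|u - v\| \le \|u\| + \|v\| = 2$ gives equality; attainment is then automatic since $u - v$ itself realizes the diameter. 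Once $N$ is fixed large enough that the segment-sum of length $N$ along $\beta$, evaluated against $u$ (resp. $v$), exceeds $1-\delta$, all three requirements hold and the proof is complete.
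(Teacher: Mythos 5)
There is a genuine gap, and it sits at the heart of your construction rather than in the bookkeeping. First, a concrete error: for the norm used here, $\norm{x}_{JT}^2=\sup_F\sum_{S\in F}x_S^2$, singletons are admissible segments, so your telescoping vector $u=e_{t_1}-e_{t_2}+\cdots\pm e_{t_k}$ has $\norm{u}_{JT}\geq\sqrt{k}$, not $1$; the claim ``every initial segment sum lies in $\{0,1\}$, giving $\norm{u}_{JT}=1$'' confuses this norm with the classical James norm built from differences. The subsequent repair is also not available: rescaling $u$ to push $x^*(u)$ from about $1/2$ up to $1-\delta$ takes you out of $B_{JT}$, and translating by a vector supported off the segment does not change $x^*(u)$ at all when $x^*$ is the summing functional of that segment. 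Second, the attainment step is internally inconsistent. If $u,v$ lie in the slice of the summing functional $\mathbbm{1}_{[t_1,t_N]}$, so that $u_{[t_1,t_N]},v_{[t_1,t_N]}>1-\delta$, you cannot also have a summing functional $g=\mathbbm{1}_{S'}$ with $g(u)=1$ and $g(v)=-1$: writing $[t_1,t_N]=A\cup(S'\cap[t_1,t_N])$ and using the disjoint family $\{A,S'\}$ forces $v_A\approx 0$, hence $v_{S'\cap[t_1,t_N]}\approx 1$ and $v_{S'\setminus[t_1,t_N]}\approx -2$, contradicting $\norm{v}\leq 1$. More generally, a pair $u,v$ in such a slice with $\norm{u-v}$ near $2$ must be nearly antipodal ($u_T\approx -v_T$) on a disjoint family carrying almost all of their segment-$\ell_2$ mass, and since $u_{[t_1,t_N]}$ and $v_{[t_1,t_N]}$ are both near $1$, this can only be reconciled by spreading the antipodal mass over many segments inside $[t_1,t_N]$; for fixed $N$ one gets $\norm{u-v}\leq 2-c(N,\delta)$ with $c(N,\delta)>0$ for small $\delta$, so the summing-functional slice does not attain diameter $2$ (and for small $\delta$ does not even have diameter $2$).

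The paper inverts the order of quantifiers in a way that avoids all of this: it first constructs the two witnesses and only then chooses the functional. On a branch one takes $x=\frac{1}{\sqrt{n}}\sum_{i=1}^n(-1)^{i+1}e_{2i-1}$ and $y=\frac{1}{\sqrt{n}}\sum_{i=1}^n(-1)^{i}e_{2i}$, supported on interleaved coordinates with alternating signs; singleton segments give $\norm{x}=\norm{y}=1$, the pairs $\{2i-1,2i\}$ give $\norm{x-y}=2$ exactly, and a direct computation gives $\norm{x+y}=\sqrt{4-2/n}$. Slicing with any norming functional $x^*$ of $x+y$ then puts both $x$ and $y$ into $S(x^*,\delta)$ once $\sqrt{4-2/n}>2-\delta$, and attainment is automatic because $\norm{x-y}=2$ on the nose. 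If you want to salvage your approach, replace the summing functional by a norming functional of $u+v$ for a correctly normalized alternating pair; the summing functional itself is the wrong slicing object here.
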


\begin{proof}
  For convenience let us work in the space $(J,\norm{.})$ introduced
  above. Doing the same construction on any branch $(x_t)_{t\in\beta}$
  of $JT$ would do the work for $JT$.

  Fix $\delta>0$, fix $n\geq 1$, and let
  \begin{equation*}
    x =
    \frac{1}{\sqrt{n}} \sum_{i=1}^n(-1)^{i+1}e_{2i-1}
    =
    \frac{1}{\sqrt{n}} (1,0,-1,0,\ldots, -1,0,0,0,\ldots)
  \end{equation*}
  and
  \begin{equation*}
    y
    =
    \frac{1}{\sqrt{n}}\sum_{i=1}^n(-1)^{i}e_{2i}
    =
    \frac{1}{\sqrt{n}} (0,-1,0,1,\ldots, 0,1,0,0,\ldots)
  \end{equation*}
  so that
  \begin{equation*}
    x - y = \frac{1}{\sqrt{n}} (1,1,-1,-1,\ldots,-1,-1,0,0,\ldots)
  \end{equation*}
  and
  \begin{equation*}
    x + y = \frac{1}{\sqrt{n}} (1,-1,-1,1,1,\ldots,-1,-1,1,0,0,\ldots).
  \end{equation*}

  It is easy to check that $\norm{x}=\norm{y}=1$, $\norm{x-y}=2$, and
  $\norm{x+y}=\sqrt{\frac{1+(n-1)\times2^2+1}{n}}=\sqrt{4-\frac{2}{n}}$.

  By assuming that $n$ was chosen large enough so that
  $\sqrt{4 - \frac{2}{n}} > 2 - \delta$ and by choosing a norming
  functional $x^*\in S_{J^*}$ for $x + y$ we then have
  $x^*(x + y) = \norm{x + y} > 2 - \delta$ and this implies that
  $x, y \in S(x^*,\delta)$.
  Since $\norm{x - y} = 2$ the conclusion follows.
\end{proof}

We start by showing that the elements of the basis of $JT$
are weak$^*$ denting (and even strongly exposed by an element of the
predual) and then use this to prove that $JT$ does not admit weak$^*$ Daugavet-points.

\begin{lem}
  For each $t\in T$, $\lim_{\delta\to 0} \diam S(e_t^*, \delta) = 0$.
\end{lem}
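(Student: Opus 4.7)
The plan is to prove the stronger statement that $e_t$ is strongly exposed by $e_t^*$ in $B_{JT}$, via a direct computation with the $JT$-norm. Fix $t\in T$ and take $x,y\in S(e_t^*,\delta)$; then $x_t = e_t^*(x)$ and $y_t = e_t^*(y)$ both lie in $(1-\delta,1]$ (since $\{t\}$ is a segment, $|x_t|\le \norm{x}_{JT}\le 1$), so in particular $|x_t-y_t|<\delta$.

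The main step is to show $\norm{x-y}_{JT}^2 = O(\delta)$. The key observation is: if $F\in\mathcal{F}$ is a family of disjoint segments none of which contains $t$, then $F\cup\{\{t\}\}$ is again a disjoint family, so $x_t^2 + \sum_{S\in F} x_S^2\le \norm{x}_{JT}^2\le 1$, whence $\sum_{S\in F}x_S^2\le 1-(1-\delta)^2<2\delta$, and similarly for $y$.

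For a general $F\in\mathcal{F}$, at most one segment $S_0\in F$ can contain $t$, since the segments of $F$ are pairwise disjoint. If such $S_0$ exists, I split $S_0=S_0^-\cup\{t\}\cup S_0^+$, where $S_0^-=\{s\in S_0:s<t\}$ and $S_0^+=\{s\in S_0:s>t\}$ are (possibly empty) segments avoiding $t$. Applying the previous bound to $(F\setminus\{S_0\})\cup\{S_0^-,S_0^+\}$ yields in particular $|x_{S_0^\pm}|, |y_{S_0^\pm}|\le \sqrt{2\delta}$, and the contribution $\sum_{S\neq S_0}(x-y)_S^2$ is controlled by $8\delta$ via $(x_S-y_S)^2\le 2x_S^2+2y_S^2$. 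Decomposing $(x-y)_{S_0} = (x_t-y_t)+(x_{S_0^-}-y_{S_0^-})+(x_{S_0^+}-y_{S_0^+})$ and applying the triangle inequality gives $|(x-y)_{S_0}|\le \delta+4\sqrt{2\delta}$. Summing, one obtains $\sum_{S\in F}(x-y)_S^2\le 8\delta + (\delta+4\sqrt{2\delta})^2$, a bound independent of $F$. Taking the supremum over $F$ and sending $\delta\to 0$ finishes the proof.

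The only genuinely nontrivial point is the combinatorial handling of the unique segment of $F$ that may pass through $t$, and this is dealt with cleanly by the splitting trick above. I note in passing that invoking the AUS property of $JT_*$ via Corollary~\ref{cor:local_aus_kuc} would only yield $\alpha(S(e_t^*,\delta))\to 0$, which is strictly weaker than $\diam S(e_t^*,\delta)\to 0$, so this direct argument really is needed to get denting rather than merely quasi-denting.
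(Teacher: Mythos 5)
Your proof is correct and follows essentially the same route as the paper's: the two key steps --- adjoining the singleton segment $\{t\}$ to any family avoiding $t$ to get the $2\delta$ bound, and splitting the unique segment through $t$ as $S^-\cup\{t\}\cup S^+$ --- are exactly the paper's. The only (cosmetic) difference is that the paper bounds $\norm{y-e_t}_{JT}$ for a single slice element via $\norm{y-y_te_t}_{JT}$, whereas you bound $\norm{x-y}_{JT}$ for two arbitrary slice elements directly; both yield the same conclusion.
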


\begin{proof}
  Let us fix $t\in T$ and $\delta>0$, and let us take
  $y \in S(e_t^*,\delta)$.
  By the triangle inequality, we have
  \begin{equation*}
    \norm{y - e_t}_{JT} \leq \norm{y - y_te_t}_{JT} + (1 - y_t)
    \leq \norm{y-y_te_t}_{JT} + \delta
  \end{equation*}
  so we only need to estimate $\norm{y - y_te_t}_{JT}$.

  If $t$ is not in the support of a family $F\in \mathcal{F}$, that is
  if $t\notin \bigcup_{S\in F} S$, then the segment $\{t\}$ is
  disjoint from all the segments in $F$ and we
  have
  \begin{equation*}
    y_t^2 + \sum_{S\in F} (y - y_te_t)_{S}^2
    = y_t^2 + \sum_{S\in F} y_{S}^2\leq\norm{y}_{JT}^2
    \leq 1
  \end{equation*}
  so that
  \begin{equation*}
    \sum_{S\in F} (y-y_te_t)_{S}^2 \leq 1 - (1 - \delta)^2
    \leq 2 \delta.
  \end{equation*}

  Now if we take any segment $S$ of $T$ containing $t$, we can write
  $S=S^- \cup \{t\} \cup S^+$ with $S^- < \{t\} < S^+$ segments of
  $T$, and by the preceding computations we have
  \begin{equation*}
    (y-y_te_t)_S^2
    \leq y_{S^-}^2 + y_{S^+}^2 + 2\sqrt{y_{S^-}^2y_{S^+}^2}
    \leq 6\delta.
  \end{equation*}
  By combining the two observations we get
  $\sum_{S\in F} (y - y_te_t)_{S}^2 \leq 8 \delta$
  for every $F \in \mathcal{F}$,
  that is $\norm{y - y_te_t}_{JT}^2 \leq 8\delta$.
  The conclusion follows.
\end{proof}

\begin{cor}
  The space $JT$ does not admit weak$^*$ Daugavet-points.
\end{cor}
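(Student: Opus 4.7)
The plan is to argue by contradiction, reading off incompatibility at the root of $T$. Suppose $x\in S_{JT}$ is a weak$^*$ Daugavet-point. The functionals $\pm e_\emptyset^*$ belong to $S_{JT_*}$, and the preceding lemma gives $\diam S(\pm e_\emptyset^*,\delta)\to 0$ as $\delta\to 0$, while $\pm e_\emptyset$ lie in $S(\pm e_\emptyset^*,\delta)$ for every $\delta>0$. Applying the weak$^*$ Daugavet condition to these two slices and letting $\delta\to 0$ then forces
\begin{equation*}
  \|x-e_\emptyset\|=\|x+e_\emptyset\|=2.
\end{equation*}

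The next step is to translate these two norm equalities in terms of the $JT$ norm. Starting from
\begin{equation*}
  \|x-e_t\|^2=\sup_F\sum_{S\in F}(x_S-\mathbf{1}_{t\in S})^2
\end{equation*}
and using the inequality $\sum_{S\in F}x_S^2\leq 1$, a routine splitting of $F$ according to whether a segment of $F$ contains $t$ yields the bound $\|x-e_t\|^2\leq \max(1,\,2-2\inf_{S\ni t}x_S)$, with matching lower bound obtained from singleton families $F=\{S\}$; a symmetric statement holds for $\|x+e_t\|$. Hence $\|x-e_\emptyset\|=2$ is equivalent to $\inf_{S\ni\emptyset}x_S=-1$ and $\|x+e_\emptyset\|=2$ to $\sup_{S\ni\emptyset}x_S=1$. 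Picking $\eta\in(0,1-1/\sqrt{2})$, I would then find segments $S_0,S_0'\ni\emptyset$ with $x_{S_0}>1-\eta$ and $x_{S_0'}<-1+\eta$. Since $\emptyset$ has no ancestor in $T$, both segments start at the root: $S_0=[\emptyset,b_0]$ and $S_0'=[\emptyset,b_0']$.

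The contradiction then follows by decomposing these two chains at their bifurcation point. Let $c$ be the last common node of $S_0$ and $S_0'$ viewed as chains from $\emptyset$, and set $I=[\emptyset,c]$, $A=S_0\setminus I$, $B=S_0'\setminus I$. By maximality of $c$, the subchains $A$ and $B$ (when non-empty) lie in different subtrees of $c$, so $\{I,A,B\}$ is a pairwise disjoint family of segments. The $JT$ norm bound thus yields
\begin{equation*}
  x_I^2+x_A^2+x_B^2\leq \|x\|^2=1.
\end{equation*}
On the other hand, $x_A-x_B=x_{S_0}-x_{S_0'}>2-2\eta$, while the Cauchy--Schwarz inequality gives $(x_A-x_B)^2\leq 2(x_A^2+x_B^2)\leq 2$. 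Combining these yields $(2-2\eta)^2<2$, contradicting the choice of $\eta$. Hence no $x\in S_{JT}$ can be a weak$^*$ Daugavet-point.

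The main obstacle is the choice of the test point $t\in T$ at which to read off incompatibility: selecting $t=\emptyset$ is what makes the three-segment decomposition automatic, because every segment through the root has only a descending part and no ancestor part. Once this reduction is in place, the remaining work is a short Cauchy--Schwarz estimate on the three quantities $x_I,x_A,x_B$.
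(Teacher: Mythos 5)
Your argument is correct, but it takes a genuinely different route from the paper's. The paper picks the first node $t$ in the support of $x$, writes $x_t=\theta\alpha$, and shows by a direct triangle-inequality estimate that $\norm{x-\theta e_t}_{JT}\leq 2-\alpha<2$; since $\theta e_t$ is a weak$^*$ denting point (by the same lemma on $\diam S(e_t^*,\delta)$ that you use), the weak$^*$ version of the denting-point obstruction from \cite{JRZ} finishes the proof. You instead work only at the root: you show a weak$^*$ Daugavet-point would have to be at distance $2$ from \emph{both} $e_\emptyset$ and $-e_\emptyset$, translate this into $\sup_{S\ni\emptyset}x_S=1$ and $\inf_{S\ni\emptyset}x_S=-1$, and rule that out by splitting the two witnessing segments at their bifurcation node and applying the block lower $\ell_2$ estimate plus Cauchy--Schwarz, which caps $x_{S_0}-x_{S_0'}=x_A-x_B$ at $\sqrt{2}<2$. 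Your route is self-contained (no case distinction for $x=e_t$, no external citation, since you rederive the distance-$2$ necessity directly from the shrinking slices), at the cost of the slightly longer segment bookkeeping; the paper's route yields the quantitatively stronger statement that \emph{every} $x\in S_{JT}$ is within $2-\abs{x_t}$ of a weak$^*$ denting point. Two small points you should make explicit: the degenerate case where $S_0\cap S_0'$ is infinite forces $S_0=S_0'$ and is excluded outright since $x_{S_0}>1-\eta>-1+\eta>x_{S_0'}$; and the verification that $\diam S(-e_\emptyset^*,\delta)=\diam S(e_\emptyset^*,\delta)$, which is immediate from $S(-e_\emptyset^*,\delta)=-S(e_\emptyset^*,\delta)$. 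Neither affects correctness.
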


\begin{proof}
  Let us fix $x\in S_{JT}$. If $x=e_t$ for some $t\in T$ then $x$ is a
  weak$^*$ denting point by the preceding lemma and $x$ cannot be a
  weak$^*$ Daugavet-point. So let us assume that $x$ is not of that
  form and let us take $t\in T$ such that $x_t\neq 0$ and $x_s=0$ for
  every $s<t$.

  Now let us write $x_t=\theta\alpha$ with $\alpha\in (0,1)$ and
  $\theta\in \{-1,1\}$. Because of the choice of $t$ we clearly
  have
  \begin{equation*}
    \norm{x-x_te_t}_{JT}^2
    =
    \sup_{F\in \mathcal{F}_t } \sum_{S\in F}^n x_S^2
  \end{equation*}
  where $\mathcal{F}_t$ is the set of finite families of disjoint
  segments of $T$ not intersecting $[\emptyset, t]$.
  Since
  \begin{equation*}
    x_t^2 + \sum_{S\in F} x_S^2
    \leq \norm{x}_{JT}^2
  \end{equation*}
  for every such family we obtain
  \begin{equation*}
    \norm{x - x_te_t}_{JT}^2
    \leq 1 - \alpha^2 < 1.
  \end{equation*}
  Thus
  \begin{equation*}
    \norm{x - \theta e_t}_{JT}
    \leq \norm{x - x_te_t}_{JT} + (1 - \alpha)
    \leq 2 - \alpha
  \end{equation*}
  so that $x$ is at distance strictly less than $2$ from a (weak$^*$)
  denting point of $JT$.  The conclusion follows from a weak$^*$
  version of \cite[Proposition~3.1]{JRZ}.
\end{proof}

Now we will show that $JT$ does not admit $\Delta$-points. For this
purpose let us introduce some more notations. Let us write
$\mathcal{F}_\infty$ for the set of (finite or infinite) families of
disjoint segments in $T$. Note that every infinite
$F\in\mathcal{F}_\infty$ has to be countable.
For any segment $S$ of $T$, let us write
$\mathbbm{1}_S = \sum_{s \in S}e_s^*$.
A molecule in $JT^*$ is an element of the form
$\sum_{i \geq 1}\lambda_i \mathbbm{1}_{S_i}$
where $\{S_i\}\in \mathcal{F}_{\infty}$
and $\lambda = (\lambda_i) \in B_{\ell_2}$.
The molecule is finite if the family $\{S_i\}$
contains only finitely many non-empty segments, and we will write
$\mathcal{M}$ (resp. $\mathcal{M}_\infty$) for the set of finite
(resp. finite or infinite) molecules of $JT^*$.

Molecules play an important role in the study of the space $JT$
because they turn computations in $JT$ into computations in $\ell_2$
and because of the following result due to Schachermayer
\cite[Proposition~2.2]{SchJT}.

\begin{thm}
  \label{thm:BJT*=clcoM}
  The unit ball $B_{JT^*}$ of the dual of JT is the norm closed convex
  hull of $\mathcal{M}$.
\end{thm}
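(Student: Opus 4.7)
The plan is to establish the two inclusions of this Schachermayer-type theorem by polarity in the duality $(JT,JT^*)$, the easier direction being a direct Cauchy--Schwarz computation and the harder direction requiring a bipolar computation together with a promotion from weak-$^*$ to norm closure.

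For the inclusion $\overline{\conv}(\mathcal{M}) \subseteq B_{JT^*}$, I would verify that every finite molecule sits in $B_{JT^*}$. Given $f = \sum_{i=1}^n \lambda_i \mathbbm{1}_{S_i}$ with pairwise disjoint segments and $\lambda \in B_{\ell_2}$, for each $x \in B_{JT}$ one has $f(x) = \sum_i \lambda_i x_{S_i}$, where $x_{S_i}$ is well-defined even when $S_i$ is infinite (as a convergent sum along the branch for $x \in JT$). Cauchy--Schwarz gives $|f(x)| \leq \|\lambda\|_2 \sqrt{\sum_i x_{S_i}^2}$. To bound the second factor by $\|x\|_{JT}$, I would approximate each infinite $S_i$ by the finite initial segment $S_i \cap \{|t|\leq N\}$, observe that these finite segments remain pairwise disjoint so that $\sum_i x_{S_i \cap \{|t|\leq N\}}^2 \leq \|x\|_{JT}^2$ by the very definition of the $JT$ norm, and let $N \to \infty$. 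The conclusion follows.

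For the reverse inclusion, the cornerstone is the polarity computation
\begin{equation*}
\mathcal{M}^\circ
= \Bigl\{ x \in JT : |f(x)| \leq 1 \text{ for every } f \in \mathcal{M} \Bigr\}
= B_{JT}.
\end{equation*}
Indeed, optimizing $|\sum_i \lambda_i x_{S_i}|$ over $\lambda \in B_{\ell_2}$ with the disjoint family $\{S_i\}$ fixed yields exactly $\sqrt{\sum_i x_{S_i}^2}$, and then optimizing over finite families of disjoint segments recovers the defining supremum of $\|x\|_{JT}$. The bipolar theorem in the duality $(JT, JT^*)$, combined with the fact that $\mathcal{M}$ is already balanced (since $-\sum \lambda_i \mathbbm{1}_{S_i} = \sum(-\lambda_i) \mathbbm{1}_{S_i}$), then yields $\overline{\conv}^{w^*}(\mathcal{M}) = \mathcal{M}^{\circ\circ} = B_{JT^*}$.

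The genuine difficulty, which I expect to be the main obstacle, is to upgrade the weak-$^*$ closure to a norm closure, since the weak-$^*$ closed convex hull of $\mathcal{M}$ can in principle be strictly larger than its norm-closed convex hull. My plan here is to construct the norm-approximation explicitly: given $f \in B_{JT^*}$ and $\varepsilon > 0$, I would split $f$ using the level-$n$ projection $P_n^* f \in B_{JT^*}$, whose restriction to the finite-dimensional section $JT_n = [e_t]_{|t|\leq n}$ lies in $\conv(\mathcal{M}_n)$ by a finite-dimensional bipolar argument, and a ``tail'' correction built out of functionals $\mathbbm{1}_\beta$ for branches $\beta$. The critical feature that makes this work — and that distinguishes norm closure from mere weak-$^*$ closure — is that $\mathbbm{1}_\beta$ already belongs to $\mathcal{M}$ as a one-term molecule with an infinite segment, so the branch contribution is captured exactly rather than only in a limiting sense. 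Fitting the finite-level part and the branch tail into a single convex combination of molecules and letting $n \to \infty$ produces the desired norm-approximation.
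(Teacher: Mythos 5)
First, a point of reference: the paper does not prove this statement at all --- it is quoted from Schachermayer \cite[Proposition~2.2]{SchJT} --- so your proposal can only be judged on its own terms. The parts you carry out are correct: the Cauchy--Schwarz estimate showing $\mathcal{M}\subset B_{JT^*}$ (with the finite-truncation argument for infinite segments) is exactly the computation the paper itself uses in Lemma~\ref{l_2_lemma}, and the polar computation $\mathcal{M}^\circ=B_{JT}$ together with the bipolar theorem does give $\overline{\conv}^{\,w^*}(\mathcal{M})=B_{JT^*}$.

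The genuine gap is precisely the step you flag as the main obstacle, and your sketch does not close it. Two things go wrong. First, the basis of $JT$ is boundedly complete but not shrinking (otherwise $JT^*=JT_*$, contradicting the non-separability of $JT^*$), so $P_n^*f\not\to f$ in norm; the residual $f-P_n^*f$ is a genuinely ``singular'' part, and the assertion that it can be norm-approximated by combinations of branch functionals $\1_\beta$ is essentially the structure theorem for $JT^*$ (that modulo $JT_*$ every functional is an $\ell_2$-combination of branch functionals, cf.\ the Lindenstrauss--Stegall/Schachermayer analysis of $JT^*/JT_*\cong\ell_2(\Gamma)$). That is the substantive content of the theorem, and you assume it rather than prove it. Second, even granting norm-approximations of both pieces by elements of $\conv(\mathcal{M})$, their sum only lies in $2\conv(\mathcal{M})$; to land back in $\conv(\mathcal{M})$ you must merge the molecules coming from the finite-level part with those coming from the tail into \emph{single} molecules, which requires both disjointness of the supporting segments and an $\ell_2$-orthogonality estimate of the form $\norm{g}^2+\norm{h}^2\lesssim\norm{f}^2$ for the decomposition $f=g+h$. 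Your phrase ``fitting the finite-level part and the branch tail into a single convex combination'' names this step but gives no mechanism for it. As it stands the argument proves only the weak$^*$ version of the theorem, which (as you yourself note) is strictly weaker and corresponds to Remark~\ref{rem:weak*-clM=Minfty} rather than to Theorem~\ref{thm:BJT*=clcoM}.
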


\begin{rem}
  \label{rem:weak*-clM=Minfty}
  It is also known that $\mathcal{M}_\infty$ is the weak$^*$ closure
  of $\mathcal{M}$ in $B_{JT^*}$.
\end{rem}

We will use some specific molecules to provide norming functionals for
elements of $S_{JT}$. For $x\in S_{JT}$ and $F\in\mathcal{F}_\infty$,
let us write $m_{x,F} = \sum_{S\in F}x_S\mathbbm{1}_S$.  Since
$\sum_{S\in F}x_S^2 \leq \norm{x}_{JT}=1$, those elements are
molecules in $JT^*$. Moreover, we have the following result.

\begin{lem}\label{l_2_lemma}
  Let us assume that $\sum_{S\in F} x_S^2 = \norm{x}_{JT} = 1$.
  Then the molecule $m_{x,F}$ belongs to $D(x)$ (that is, it is a norm
  one functional and it norms $x$). Moreover, if $y\in S(m_{x,F},\delta)$
  for some $\delta>0$, then we have
  \begin{equation*}
    \sum_{S\in F} (x_S - y_S)^2 \leq 2\delta.
  \end{equation*}
\end{lem}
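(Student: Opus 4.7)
The plan is to verify the two assertions by direct computation, exploiting the fact that the segments in $F$ are pairwise disjoint, so that sums of the form $\sum_{S\in F} z_S^2$ are controlled by $\|z\|_{JT}^2$. No real obstacle is expected; the statement amounts to Cauchy--Schwarz together with the definition of the $JT$-norm.

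First, I would observe that $m_{x,F}(x) = \sum_{S\in F} x_S \cdot x_S = \sum_{S\in F} x_S^2 = 1$ by the hypothesis, so $m_{x,F}$ attains the value $1$ at $x$. To see that $\|m_{x,F}\|_{JT^*} \leq 1$ (and hence equals $1$), I would fix any $z \in B_{JT}$ and apply Cauchy--Schwarz:
\begin{equation*}
|m_{x,F}(z)| = \Bigl| \sum_{S\in F} x_S z_S \Bigr|
\leq \Bigl(\sum_{S\in F} x_S^2\Bigr)^{1/2} \Bigl(\sum_{S\in F} z_S^2\Bigr)^{1/2}.
\end{equation*}
The first factor is $1$ by hypothesis. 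For the second factor, since the segments in $F$ are pairwise disjoint, the definition of $\|\cdot\|_{JT}$ gives $\sum_{S\in F'} z_S^2 \leq \|z\|_{JT}^2 \leq 1$ for every finite sub-family $F'\subset F$, and letting $F'$ increase to $F$ (in the case $F$ is infinite) yields $\sum_{S\in F} z_S^2 \leq 1$. This shows $m_{x,F}\in D(x)$.

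For the moreover part, I would simply expand the quadratic form
\begin{equation*}
\sum_{S\in F}(x_S - y_S)^2 = \sum_{S\in F} x_S^2 \;-\; 2\sum_{S\in F} x_S y_S \;+\; \sum_{S\in F} y_S^2.
\end{equation*}
The first term equals $1$ by hypothesis. The middle term equals $2\,m_{x,F}(y)$ and is therefore bounded below by $2(1-\delta)$ since $y \in S(m_{x,F}, \delta)$. The third term is at most $1$ by the same disjointness argument applied to $y\in B_{JT}$. Summing these three bounds gives $\sum_{S\in F}(x_S - y_S)^2 \leq 1 - 2(1-\delta) + 1 = 2\delta$, as required.
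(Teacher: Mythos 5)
Your proof is correct and follows essentially the same route as the paper's: Cauchy--Schwarz over the disjoint family $F$ (viewing $m_{x,F}(y)$ as an $\ell_2$ inner product of $(x_S)$ and $(y_S)$) gives $\|m_{x,F}\|\le 1$, and expanding the square gives the $2\delta$ bound. Your extra care in passing from finite subfamilies to an infinite $F$ is a small point the paper leaves implicit, but the argument is the same.
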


\begin{proof}
  The key observation here is that for every $y\in JT$,
  we have
  \begin{equation*}
    m_{x,F}(y) = \sum_{S\in F} x_Sy_S
    = \langle (x_S)_{S\in F}, (y_S)_{S\in F} \rangle_{\ell_2}.
  \end{equation*}
  In particular we have, by the Cauchy--Schwartz inequality,
  \begin{equation*}
    \abs{m_{x,F}(y)} \leq
    \norm{(x_S)_{S\in F}}_{\ell_2} \norm{ (y_S)_{S\in F}}_{\ell_2}
    \leq \norm{x}_{JT} \norm{y}_{JT},
  \end{equation*} so
  $\norm{m_{x,F}}_{JT^*} \leq 1$.
  By assumption
  $m_{x,F}(x) = \norm{(x_S)_{S\in F}}_{\ell_2}^2 = 1$
  and the first part of the lemma follows.

  Now $y\in S(m_{x,F},\delta)$ if and only if
  $\langle (x_S)_{S\in F}, (y_S)_{S\in F} \rangle_{\ell_2}
  > 1-\delta$ and thus every
  $y$ in $S(m_{x,F},\delta)$ satisfy
  \begin{align*}
    \norm{(x_S-y_S)_{S\in F}}_{\ell_2}^2
    =&
    \norm{ (x_S)_{S \in F}}_{\ell_2}^2
    +
    \norm{ (x_S)_{S \in F}}_{\ell_2}^2
    -
    2 \langle (x_S)_{S\in F}, (y_S)_{S\in F} \rangle_{\ell_2} \\
    & \leq 2 - 2(1 - \delta)
    = 2 \delta. \qedhere
  \end{align*}
\end{proof}

Note that it is not obvious at first that such a norm attaining family
exists. We will first do a warm up with finitely supported elements in
$JT$ for which this is obvious, and then prove it in a lemma.

\begin{prop}
  Let $x\in S_{JT}$ be an element of finite support. Then $x$ is not a
  weak$^*$ $\Delta$-point.
\end{prop}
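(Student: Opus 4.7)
The plan is to exploit the finite support of $x$ to construct a specific weak$^*$-continuous norming functional whose associated weak$^*$ slice of $B_{JT}$ has diameter from $x$ strictly below $2$. Let $F_0 := \supp(x) \subset T$. Since $\sum_{S\in F}x_S^2$ for $F\in\mathcal{F}$ depends only on the intersections $S\cap F_0$, and $F_0$ is finite, the set of achievable values of $\sum_{S\in F}x_S^2$ is a finite subset of $[0,1]$. In particular the set $\mathcal{N}_0(x)$ of \emph{minimal norming families}---disjoint families $F=\{S_1,\ldots,S_k\}$ satisfying $\sum_i x_{S_i}^2 = 1$ with $S_i=[\min(S_i\cap F_0),\max(S_i\cap F_0)]$ for each $i$---is finite of some cardinality $N$, and the gap $\eta_0 > 0$ between $1$ and the next-largest achievable value of $\sum x_S^2$ is strictly positive. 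Lemma~\ref{l_2_lemma} gives $m_{x,F}\in D(x)\cap JT_*$ for each $F\in\mathcal{N}_0(x)$, and Lemma~\ref{intersection_lemma} yields that the average $m_0 := \frac{1}{N}\sum_{F\in\mathcal{N}_0(x)} m_{x,F}$ also lies in $D(x)\cap JT_*$ with $S(m_0,\delta/N)\subseteq\bigcap_{F\in\mathcal{N}_0(x)} S(m_{x,F},\delta)$ for every $\delta>0$. I will show that, for $\delta$ small enough, $S(m_0,\delta/N)$ has diameter from $x$ strictly below $2$.

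Assume toward a contradiction that some $y \in S(m_0,\delta/N)$ satisfies $\|x-y\| > 2-\varepsilon$. By Hahn--Banach pick $z_0^*\in B_{JT^*}$ with $z_0^*(y-x)=\|y-x\|>2-\varepsilon$, and by Theorem~\ref{thm:BJT*=clcoM} approximate $z_0^*$ in norm to within $\varepsilon$ by a convex combination $\sum_i\lambda_i m_i$ of finite molecules in $\mathcal{M}$; the standard averaging principle $\max_i m_i(y-x) \geq \sum_i\lambda_i m_i(y-x)$ then yields a single $z^* = \sum_j \alpha_j \mathbbm{1}_{R_j}\in\mathcal{M}$ with $\sum_j \alpha_j^2\leq 1$ and $z^*(y-x) > 2-3\varepsilon$, so $-z^*(x) > 1 - 3\varepsilon$ and $z^*(y) > 1 - 3\varepsilon$.

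Set $J_1 := \{j : R_j\cap F_0\neq\emptyset\}$ (finite since $F_0$ is finite and the $R_j$ are disjoint), put $R_j' := [\min(R_j\cap F_0),\max(R_j\cap F_0)]$ for $j\in J_1$, and let $F' := \{R_j'\}_{j\in J_1}$. Since $x_{R_j}=x_{R_j'}$ for $j\in J_1$ and $x_{R_j}=0$ elsewhere, the Cauchy--Schwartz inequality applied to $\sum_{J_1}(-\alpha_j)x_{R_j} > 1-3\varepsilon$ gives $(\sum_{J_1}\alpha_j^2)(\sum_{J_1}x_{R_j}^2) > (1-3\varepsilon)^2$; as each factor is at most $1$, both exceed $1-6\varepsilon$. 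For $\varepsilon$ small enough ($6\varepsilon<\eta_0$), the gap argument forces $\sum_{J_1}x_{R_j}^2=1$, so $F'\in\mathcal{N}_0(x)$. The sharpness of Cauchy--Schwartz further yields $\sum_{J_1}(\alpha_j+x_{R_j})^2 \leq 6\varepsilon$ and $\sum_{j\notin J_1}\alpha_j^2\leq 6\varepsilon$, expressing the closeness of $z^*$ to $-m_{x,F'}$ on $J_1$ together with negligible mass off $J_1$.

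The main technical obstacle is that $y_{R_j}$ may differ from $y_{R_j'}$ when $R_j$ strictly contains $R_j'$. Let $R_j^\pm$ denote the two (possibly empty) residual subsegments of $R_j\setminus R_j'$; these are disjoint from $F_0$, and the family $F' \cup \{R_j^-\}_{j\in J_1} \cup \{R_j^+\}_{j\in J_1}$ is a disjoint family of segments of $T$, so $\sum_{R'\in F'}y_{R'}^2 + \sum_{J_1}(y_{R_j^-})^2 + \sum_{J_1}(y_{R_j^+})^2 \leq \|y\|_{JT}^2 \leq 1$. Since $y \in S(m_{x,F'},\delta)$, Lemma~\ref{l_2_lemma} gives $\sum_{R'\in F'}y_{R'}^2 \geq 1-2\sqrt{2\delta}$, so the residuals contribute at most $2\sqrt{2\delta}$ in sum of squares and hence $\|(y_{R_j}-y_{R_j'})_{j\in J_1}\|_{\ell_2} \leq 2(2\delta)^{1/4}$. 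Combining this with the near-identification of $z^*$ and $-m_{x,F'}$ on $J_1$, the condition $z^*(y) > 1-3\varepsilon$ translates to $m_{x,F'}(y) = \sum_{R'\in F'} x_{R'}y_{R'} < -1 + O(\sqrt\varepsilon) + O(\delta^{1/4})$. But $y\in S(m_{x,F'},\delta)$ also gives $m_{x,F'}(y) > 1-\delta$, which is incompatible for $\varepsilon$ and $\delta$ sufficiently small, contradicting the assumption. Hence $x$ is not a weak$^*$ $\Delta$-point.
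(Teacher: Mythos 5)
Your proof is correct. The setup is the same as the paper's --- finitely many ``minimal'' norming families for a finitely supported $x$, the resulting gap $\eta_0>0$, the averaged molecule $m_0\in D(x)\cap JT_*$, and Lemma~\ref{intersection_lemma} to pass to the intersection of the slices $S(m_{x,F},\delta)$ --- but your final estimate goes by a genuinely different route. The paper bounds $\norm{x-y}_{JT}^2=\sup_G\sum_{S\in G}(x_S-y_S)^2$ directly from the definition of the norm, splitting over families $G$: non-norming families are handled by Minkowski's inequality (giving $(2-\eta_x)^2$) and norming families by Lemma~\ref{l_2_lemma} (giving $2\delta$). You instead dualize: assuming $\norm{x-y}>2-\varepsilon$, you use Theorem~\ref{thm:BJT*=clcoM} to extract a single molecule $z^*$ nearly norming $y-x$, and an equality-case Cauchy--Schwarz argument combined with the gap $\eta_0$ forces $z^*$ to be close to $-m_{x,F'}$ for some norming family $F'$, so that $z^*(y)>1-3\varepsilon$ yields $m_{x,F'}(y)\approx-1$, contradicting $y\in S(m_{x,F'},\delta)$. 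Your argument is longer, but it explicitly controls the discrepancy between $y_{R_j}$ and $y_{R_j'}$ via the residual segments $R_j^{\pm}$ --- precisely the issue the paper only treats in full in the proof of the general theorem (Claims 1--4), its warm-up finite-support proof being somewhat elliptic about families whose segments extend beyond the convex hull of the support. What the paper's direct approach buys is brevity and a template that extends to infinitely supported $x$ after truncation to $T_N$; what your dual approach buys is a structural ``rigidity'' statement (any functional almost norming $y-x$ must essentially be $-m_{x,F'}$), at the cost of invoking Schachermayer's description of $B_{JT^*}$.
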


\begin{proof}
  Let $x\in S_{JT}$ be an element of finite support $\Sigma$ and let
  $\mathcal{F}_\Sigma$ be the set of families of disjoint segments of
  the convex hull of $\Sigma$ in  $T$ (that is the smaller subset of
  $T$ containing $[s,t]$ for every $s\leq t$ in $\Sigma$). Then
  $\mathcal{F}_\Sigma$ is finite and we have
  \begin{equation*}
    \norm{x}_{JT}^2
    =
    \max_{F \in \mathcal{F}_\Sigma} \sum_{S\in F}x_S^2.
  \end{equation*}
  Now let
  $\mathcal{D} =
  \{ F\in \mathcal{F}_\Sigma :\ \sum_{S\in F}x_S^2=1\}$.
  The set $\mathcal{D}$ is a non-empty subset of $\mathcal{F}_\Sigma$
  and since this set is finite we can find a constant $\eta_x>0$ such
  that for every $F \in \mathcal{F}_\Sigma \backslash \mathcal{D}$
  we have
  \begin{equation*}
    \sum_{S\in F}x_S^2 < (1 - \eta_x)^2.
  \end{equation*}

  Let us introduce
  $x^* = \frac{1}{\abs{\mathcal{D}}} \sum_{F\in \mathcal{D}} m_{x,F}$
  where $m_{x,F}$ is the molecule associated to $x$ and $F$.
  For every $F\in \mathcal{D}$, the molecule $m_{x,F}$ is in $D(x)$
  by the preceding lemma, and by Lemma~\ref{intersection_lemma}
  we know that $x^*$ is in $D(x)$ and that for every choice of
  a $\delta>0$ we have
  \begin{equation*}
    S\left(x^*, \frac{\delta}{n}\right)
    \subset \bigcap_{F\in \mathcal{D}} S(m_{x,F},\delta).
  \end{equation*}
  To conclude choose any $\delta\in(0,1)$ and let us take
  $y \in S\left(x^*, \frac{\delta}{n}\right)$.
  If $F \in \mathcal{F}$ does not belong to $\mathcal{D}$,
  then by Minkowski's inequality we have
  \begin{equation*}
    \sum_{S\in F} (x_S - y_S)^2
    \leq
    \left(\sqrt{\sum_{S\in F}x_S^2}
      +
      \sqrt{\sum_{S\in F}y_S^2}
    \right)^2
    \leq (2 - \eta_x)^2.
  \end{equation*}
  Now if $F\in \mathcal{D}$ we have $y \in S(m_{x,F},\delta)$
  and by the preceding lemma we get
  \begin{equation*}
    \sum_{S\in F} (x_S - y_S)^2 \leq 2\delta.
  \end{equation*}
  Finally
  $\norm{x - y}_{JT} \leq \max\{ 2 - \eta_x, \sqrt{2 \delta}\}$
  and this quantity is strictly less than $2$ since $\delta<1$.
  To conclude, note that $x^*\in JT_*$ since all the segments
  involved in the construction are finite.
\end{proof}

To tackle the other elements of $JT$ we first need to ensure the
existence of norm attaining (possibly infinite) families.

\begin{lem}
  Let $x\in S_{JT}$. 
  Then there is an $F \in \mathcal{F}_\infty$
  such that $\sum_{S\in F} x_S^2 = 1$.
\end{lem}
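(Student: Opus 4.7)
The plan is to apply a weak$^*$-compactness argument to a sequence of almost-norming molecules. First I would pick a sequence $(F_n) \subset \mathcal{F}$ of finite families of disjoint segments with $\sigma_n := \sum_{S \in F_n} x_S^2 \to \|x\|_{JT}^2 = 1$, and form the associated molecules $\mu_n := m_{x,F_n} \in \mathcal{M}$. The Cauchy--Schwarz computation appearing in the proof of Lemma~\ref{l_2_lemma} immediately gives $\|\mu_n\|_{JT^*} \leq \sqrt{\sigma_n} \leq 1$, while $\mu_n(x) = \sigma_n \to 1$.

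By Banach--Alaoglu, the sequence $(\mu_n) \subset B_{JT^*}$ admits a weak$^*$-cluster point $\mu$. By Remark~\ref{rem:weak*-clM=Minfty}, this cluster point belongs to $\mathcal{M}_\infty$, so it can be written as $\mu = \sum_i \lambda_i \mathbbm{1}_{S_i}$ with $(S_i) \in \mathcal{F}_\infty$ and $(\lambda_i) \in B_{\ell_2}$. Since evaluation at $x$ is weak$^*$-continuous, the convergence $\mu_n(x) \to 1$ along the corresponding subnet forces $\mu(x) = \sum_i \lambda_i x_{S_i} = 1$.

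To conclude, I would invoke the equality case of the Cauchy--Schwarz inequality on the $\ell_2$-pairing $\sum_i \lambda_i x_{S_i}$. The disjointness of the $S_i$'s implies that every finite subfamily lies in $\mathcal{F}$, so $\sum_i x_{S_i}^2 \leq \|x\|_{JT}^2 = 1$. Combined with $\|\lambda\|_{\ell_2} \leq 1$, this yields the chain $1 = \sum_i \lambda_i x_{S_i} \leq \|\lambda\|_{\ell_2} \cdot \|(x_{S_i})\|_{\ell_2} \leq 1$. Equality throughout forces $\sum_i x_{S_i}^2 = 1$, so $F := \{S_i : x_{S_i} \neq 0\}$ is the desired element of $\mathcal{F}_\infty$.

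The step most deserving of care is the identification of the weak$^*$-cluster point, which relies entirely on Remark~\ref{rem:weak*-clM=Minfty}; once that is granted the rest is a transparent $\ell_2$-geometric computation. A more hands-on alternative would be to stabilize each segment $S_j^n$ via a diagonal extraction on the countable space of segments (encoded by a starting node together with binary choices for eventual branches), but such an approach runs into subtle convergence issues for infinite segments where $x_S$ need not be given by an absolutely convergent sum; the molecule route avoids this entirely by working directly with elements of $B_{JT^*}$ whose existence is ensured by Theorem~\ref{thm:BJT*=clcoM} and the associated weak$^*$-closure description.
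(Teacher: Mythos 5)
Your proof is correct, but it reaches $\mathcal{M}_\infty$ by a different route than the paper. The paper's argument is extremal: by Krein--Milman the weak$^*$-compact face $D(x)$ contains an extreme point $x^*$ of $B_{JT^*}$, and Milman's converse applied to Theorem~\ref{thm:BJT*=clcoM} together with Remark~\ref{rem:weak*-clM=Minfty} forces $x^*\in\mathcal{M}_\infty$; the endgame is then the same $\ell_2$ computation as yours, phrased as ``$\lambda$ strongly exposes $(x_{S_i})$ in $B_{\ell_2}$'' rather than as the equality case of Cauchy--Schwarz. You instead take finite almost-norming families $F_n$, form the molecules $m_{x,F_n}$, and extract a weak$^*$-cluster point, which lies in $\mathcal{M}_\infty$ precisely because Remark~\ref{rem:weak*-clM=Minfty} identifies $\mathcal{M}_\infty$ as the weak$^*$ closure of $\mathcal{M}$. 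Your version avoids Krein--Milman and Milman's theorem entirely and is more constructive in flavour (the norming family arises as a limit of explicit finite approximants, much like the compactness extraction in Lemma~\ref{D_N_lemma}); the paper's version is shorter given that both Theorem~\ref{thm:BJT*=clcoM} and the Remark are already on the table. Both proofs ultimately rest on the same structural input, namely the description of the weak$^*$ closure of the set of finite molecules, and your closing steps (absolute convergence of $\sum_i\lambda_i x_{S_i}$, the bound $\sum_i x_{S_i}^2\le 1$ from finite subfamilies, and the equality case) are all sound.
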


\begin{proof}
  Let $x \in S_{JT}.$ By Krein--Milman $x$ attains its norm on an
  extreme point $x^* \in B_{JT*}.$
  By Milman's converse, Lemma~\ref{thm:BJT*=clcoM}
  and Remark~\ref{rem:weak*-clM=Minfty}, $x^* \in M_\infty,$
  so we can write $x^*= \sum_{i\geq 1} \lambda_i\mathbbm{1}_{S_i}$
  for some $\lambda = (\lambda_i)$ in $B_{\ell_2}$
  and $\{S_i\}$ in $\mathcal{F}_{\infty}$.
  To conclude, consider $\mu=(x_{S_i})$ in $B_{\ell_2}$
  and observe that
  \begin{equation*}
    \langle \lambda,\mu\rangle_{\ell_2}
    =x^*(x)
    =1,
  \end{equation*}
  so $\lambda$ strongly exposes $\mu$ in $B_{\ell_2}.$ Thus
  $\lambda=\mu$ and $\norm{\mu}_{\ell_2} =1$  which is precisely the
  desired result.
\end{proof}

For every $x \in S_{JT}$ let us introduce
$\mathcal{D}(x) =
\{ F \in \mathcal{F}_\infty :\ \sum_{S \in F}x^2_S = 1 \}$.
By the preceding lemma $\mathcal{D}(x)$ is non-empty, but it
does not need to be finite. To get around this problem we will
consider the restriction of families in $\mathcal{D}(x)$ to
a subtree of finite level.
So for every $N\geq 1$ let us write
$T_N = \text{level}[0,N]
= \{\emptyset\} \cup \bigcup_{n=1}^N \{0,1\}^n$
the binary tree of height $N$ and let us write
$\mathcal{D}_N(x) = \{F\cap T_N :\ F\in \mathcal{D}(x)\}$
where $F\cap T_N = \{S\cap T_N :\ S\in F\}$.
Then $\mathcal{D}_N(x)$ is a finite non-empty
set and we have, similarly to the finite support
case, the following lemma.

\begin{lem}\label{D_N_lemma}
  For every $x \in S_{JT}$ and for every $N\geq 1$, there is a constant
  $\eta_{x,N} > 0$ such that $\sum_{S\in F} x_S^2 < (1-\eta_{x,N})^2$
  for every $F$ in $\mathcal{F}$ for which $F\cap T_N$ does not belong
  to $\mathcal{D}_N(x)$.
\end{lem}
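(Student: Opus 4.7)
The strategy mirrors the finite-support case: reduce to finitely many possibilities and rule out the bad ones by a compactness argument. Since $T_N$ is finite, so is $\mathcal{G}_N := \{F\cap T_N : F\in \mathcal{F}\}$, and $\mathcal{D}_N(x)\subseteq \mathcal{G}_N$. For each $G\in \mathcal{G}_N$ set
\[
  M(G) := \sup\Bigl\{\sum_{S\in F}x_S^2 : F\in \mathcal{F},\ F\cap T_N = G\Bigr\}.
\]
It suffices to prove $M(G)<1$ for every $G\in \mathcal{G}_N\setminus \mathcal{D}_N(x)$; then $\eta_{x,N}$ is chosen so that $(1-\eta_{x,N})^2$ dominates the (finitely many) such values.

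Fix such a $G$ and assume toward contradiction that $M(G)=1$. Pick $F_k\in \mathcal{F}$ with $F_k\cap T_N=G$ and $\sum_{S\in F_k}x_S^2\to 1$. The molecules $m_k:=m_{x,F_k}$ lie in $\mathcal{M}\subseteq B_{JT^*}$, and Remark~\ref{rem:weak*-clM=Minfty} together with weak$^*$ compactness of $B_{JT^*}$ yield a weak$^*$ cluster point $m^*\in \mathcal{M}_\infty$. Pointwise, $m^*(x)=\lim m_k(x)=1$, and reapplying the Cauchy--Schwarz strong-exposure argument from the proof of the preceding lemma produces a representation $m^* = \sum_i x_{S_i^*}\mathbbm{1}_{S_i^*}$ with $\sum_i x_{S_i^*}^2=1$. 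Setting $F^* := \{S_i^* : x_{S_i^*}\neq 0\}$, we obtain $F^*\in \mathcal{D}(x)$.

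It remains to modify $F^*$ into some $F^{**}\in \mathcal{D}(x)$ with $F^{**}\cap T_N=G$, yielding the sought contradiction with $G\notin \mathcal{D}_N(x)$. At each $t\in T_N$, pointwise convergence gives $m^*(e_t)=\lim m_k(e_t)$, hence $m^*(e_t)=0$ when $t\notin \bigcup G$ (so $F^*$ avoids $T_N\setminus \bigcup G$) and $m^*(e_t)=a_{\hat S}$ is constant along each $\hat S\in G$ (because $m_k(e_t)=x_{S_k(\hat S)}$ depends only on $\hat S$). The block lower $\ell_2$ estimate, paired with $\|x\|_{JT}^2=1$, now forces two structural facts. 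First, no $\hat S\in G$ with $a_{\hat S}\neq 0$ can be split across two segments $S_{j_1}^*, S_{j_2}^*$ of $F^*$: merging into $S_{j_1}^*\cup S_{j_2}^*$ would produce a disjoint family with squared sum $1+2a_{\hat S}^2>1$, impossible. Second, any $\hat S\in G$ with $a_{\hat S}=0$ must satisfy $x_{\hat S}=0$: the vanishing of $m^*$ along $\hat S$ forces $\hat S$ to be disjoint from $F^*$, so $F^*\cup\{\hat S\}\in \mathcal{F}_\infty$ has squared sum $1+x_{\hat S}^2\leq 1$. Dually, when a single segment $S^*\in F^*$ has $S^*\cap T_N$ spanning two adjacent $\hat S,\hat S'\in G$, the equality $a_{\hat S}=x_{S^*}=x_{\hat S}$ forces the tail $\hat S'\cup B^{up}$ below the level-$N$ boundary to carry $x$-sum zero, so splitting $S^*$ at that boundary preserves $\sum x_S^2$. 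Iterating these rearrangements---splitting merged pairs and reinserting the missing zero-contribution $\hat S$'s---produces the desired $F^{**}\in \mathcal{D}(x)$ with $F^{**}\cap T_N=G$.

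The main obstacle is this last structural analysis: the canonical representation of the weak$^*$ limit $m^*$ need not provide a family whose $T_N$-trace is literally $G$, and aligning the two requires carefully exploiting the block lower $\ell_2$ estimate of $JT$ to ensure that each local merge or split operation on $F^*$ preserves the equality $\sum x_{S_i^*}^2=1$.
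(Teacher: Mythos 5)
Your strategy is exactly the paper's: argue by contradiction, use finiteness of the set of traces $\{F\cap T_N : F\in\mathcal F\}$ to fix one bad trace attained by a maximizing sequence $(F_k)$, pass to a weak$^*$ cluster point of the molecules $m_{x,F_k}$ (which lies in $\mathcal M_\infty$ and norms $x$), and extract from it a family in $\mathcal D(x)$ with the prescribed trace. The paper compresses that last step into the phrase ``using a compactness argument \dots we would then obtain a family $G\in\mathcal F_\infty$ for which $\sum_{S\in G}x_S^2=1$ and such that $G\cap T_N=F$''; you are right that this is the only non-routine point, since the canonical family of the limit molecule need not have trace $G$, and your mechanism --- local merges and splits whose admissibility is policed by $\|x\|_{JT}=1$ --- is the correct one.

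Two spots in the structural analysis need shoring up. First, the no-splitting step presupposes that $S_{j_1}^*\cup S_{j_2}^*$ is a segment; this can fail if $S_{j_1}^*$ leaves $\hat S$ at an interior node $u$ and continues through the other child $v'$ of $u$ (which is allowed: $v'$ lies in $T_N$, and pointwise convergence only forces $v'\in\bigcup G$, not $v'\in\hat S$). The fix is the same kind of surgery: writing $s$ for the top of $S_{j_1}^*$, replace $S_{j_1}^*,S_{j_2}^*$ by $A=(S_{j_1}^*\cap[s,u])\cup S_{j_2}^*$ and $B=S_{j_1}^*\setminus[s,u]$, both genuine segments; the new squared sum is $1+2p^2$ with $p=x_{S_{j_1}^*\cap[s,u]}$, so $p=0$, the rerouted family is still norming, and $\hat S$ is covered by fewer pieces; iterate. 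Second, the identity $x_{S^*}=x_{\hat S}$ you invoke when $S^*\cap T_N$ spans two adjacent $\hat S<\hat S'$ is true but not justified as written: it holds because the upper member $\hat S$ of an adjacent pair necessarily lies in $T_{N-1}$ (its deepest node is the parent of the top of $\hat S'\subseteq T_N$), so the segments of $F_k$ tracing $\hat S$ cannot descend below level $N$ and must equal $\hat S$ itself, whence $a_{\hat S}=\lim_k x_{S_k(\hat S)}=x_{\hat S}$. The same observation disposes of a trace spanning three or more adjacent members of $G$ (iterating your split would drive the squared sum above $1$ unless the common value is $0$). With these repairs your argument is complete and coincides with what the paper intends.
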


\begin{proof}
  If this was not true, then since $\mathcal{F}\cap T_N$ is also
  finite we could find a family $F \in \mathcal{F}$ of segments of
  $T_N$ not belonging to $\mathcal{D}_N(x)$ and a sequence
  $(F_i)_{i\geq 1} \subset \mathcal{F}$ such that
  $F_i\cap T_N = F$ and
  $\sum_{S \in F_i}x_S^2 > \left( 1 - \frac{1}{i} \right)^2$
  for every $i\geq 1$. Using a compactness argument
  (weak$^*$ extractions for the sequence of corresponding
  molecules in $B_{JT^*}$ and metrizability of the weak$^*$ topology on
  $B_{JT^*}$) 
  we would then obtain a family $G\in \mathcal{F}_\infty$ for which
  $\sum_{S\in G}x_S^2=1$  and such that $G\cap T_N= F$. But then this
  would mean that $F\in \mathcal{D}_N(x)$ and we would get a
  contradiction.
\end{proof}

We will need a last easy fact which states that any $x\in S_{JT}$ has
its norm almost concentrated on a subtree of finite level. The proof
is elementary and comes from the definition of the norm of $JT$.

\begin{lem}
  Let $x \in S_{JT}$ and let $\varepsilon>0$.
  There is an $N\geq 1$ such that for every family
  $F \in \mathcal{F}_\infty$ of segments of $T$ which
  do not intersect $T_N$ one has
  $\sum_{S\in F}x_S^2 \leq \varepsilon^2$.
\end{lem}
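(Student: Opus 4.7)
The plan is to exploit the definition of $\norm{\cdot}_{JT}$ directly. Since $\norm{x}_{JT}=1$, I pick a finite family $F_0=\{S_1,\dots,S_k\}\in\mathcal{F}$ of pairwise disjoint segments with $\sum_{i=1}^k x_{S_i}^2 > 1 - \varepsilon^2/2$. The idea is then to replace each $S_i$ by its truncation $S_i\cap T_N$ for $N$ large enough that $\sum_{i=1}^k x_{S_i\cap T_N}^2 > 1-\varepsilon^2$, and to combine this finite family of segments contained in $T_N$ with any bad family $F\in\mathcal{F}_\infty$ avoiding $T_N$ so as to contradict the norm bound $\norm{x}_{JT}^2=1$.

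The only technical point is to verify that $x_{S\cap T_N}\to x_S$ as $N\to\infty$ for each $S$, possibly infinite. First, $S\cap T_N$ is indeed a segment, since $T_N$ is downward closed and hence order-convex, and the intersection of two order-convex totally ordered sets is again order-convex and totally ordered. Next, $\mathbbm{1}_{S\cap T_N}$ is a molecule in $\mathcal{M}$ (the set $S\cap T_N$ is always finite because $T_N$ is finite) with associated $\ell_2$-coefficient of modulus $1$, hence an element of $B_{JT^*}$, and it coincides with $\mathbbm{1}_S$ on every basis vector $e_t$ as soon as $N\geq|t|$. By boundedness and density of the linear span of $\{e_t\}_{t\in T}$ in $JT$, the sequence $\mathbbm{1}_{S\cap T_N}$ converges weak$^*$ to $\mathbbm{1}_S$, and evaluating at $x$ yields $x_{S\cap T_N}\to x_S$. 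Applied simultaneously to $S_1,\dots,S_k$ this produces an $N$ satisfying the condition above, and I set $F_0^{(N)}=\{S_i\cap T_N : 1\leq i\leq k,\ S_i\cap T_N\neq\emptyset\}$, which is a finite family of pairwise disjoint segments entirely contained in $T_N$.

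To conclude, let $F\in\mathcal{F}_\infty$ be any family of disjoint segments none of which intersects $T_N$. Since $F_0^{(N)}$ lies in $T_N$ and the segments of $F$ lie in $T\setminus T_N$, the two collections are pairwise disjoint, so $F_0^{(N)}\cup F'$ is a finite family of pairwise disjoint segments for every finite $F'\subset F$. The definition of $\norm{\cdot}_{JT}$ then gives
\begin{equation*}
  \sum_{i=1}^k x_{S_i\cap T_N}^2 + \sum_{S\in F'} x_S^2 \leq \norm{x}_{JT}^2 = 1,
\end{equation*}
so $\sum_{S\in F'} x_S^2 < \varepsilon^2$, and taking the supremum over finite $F'\subset F$ yields $\sum_{S\in F} x_S^2\leq\varepsilon^2$ as desired. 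The only mild obstacle is the weak$^*$-convergence argument for possibly infinite $S_i$; everything else is a routine unpacking of the definition of the $JT$-norm.
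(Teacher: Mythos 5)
Your proof is correct. The paper omits the argument entirely (declaring it ``elementary and from the definition of the norm''), and what you give --- truncating a near-norming finite family to level $N$ using $x_{S\cap T_N}\to x_S$ (justified via the uniformly bounded functionals $\mathbbm{1}_{S\cap T_N}$ converging pointwise on the dense span of the basis), then adjoining any family of segments avoiding $T_N$ and invoking $\norm{x}_{JT}^2=1$ --- is exactly the intended routine unpacking, with the one genuinely non-trivial point (possibly infinite segments in the near-norming family) properly handled.
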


With those tools in hand we can now prove the main result of this section.

\begin{thm}
  Let $x\in S_{JT}$. Then $x$ is not a $\Delta$-point.
\end{thm}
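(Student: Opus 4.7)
The plan is to imitate the finite-support proposition given earlier, replacing the finite set $\mathcal{D}(x)$ used there by the finite set $\mathcal{D}_N(x)$, and absorbing the part of $x$ living beyond the truncated tree $T_N$ via the concentration lemma. The asymptotic lower-$\ell_2$ behaviour of $JT$ on disjoint segments is the main engine: it turns norming identities into $\ell_2$-computations (Lemma~\ref{l_2_lemma}) and ensures that $\mathcal{D}_N(x)$ is a meaningful discrete proxy for $\mathcal{D}(x)$.

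First I would fix a small parameter $\varepsilon>0$ and use the concentration lemma to pick $N\geq 1$ such that $\sum_{S\in F}x_S^2\leq\varepsilon^2$ for every family $F\in\mathcal{F}_\infty$ of segments disjoint from $T_N$; this in particular forces $\sum_{S^-\in F^N}x_{S^-}^2\geq 1-O(\varepsilon)$ for any $F^N\in\mathcal{D}_N(x)$. For each $F^N\in\mathcal{D}_N(x)$ I would then select an extension $G(F^N)\in\mathcal{D}(x)$ with $G(F^N)\cap T_N=F^N$ and set $m_{F^N}:=m_{x,G(F^N)}$, which by Lemma~\ref{l_2_lemma} belongs to $D(x)$. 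Writing $n=|\mathcal{D}_N(x)|$, the averaged functional $x^*=\frac{1}{n}\sum_{F^N\in\mathcal{D}_N(x)}m_{F^N}$ again belongs to $D(x)$, and Lemma~\ref{intersection_lemma} guarantees that for every $\delta>0$ we have $S(x^*,\delta/n)\subset\bigcap_{F^N}S(m_{F^N},\delta)$.

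The remaining task is to pick $\delta>0$ small enough and show $\|x-y\|<2-c(\varepsilon)$ for every $y\in S(x^*,\delta/n)$. I would do this by bounding $\sum_{S\in F}(x_S-y_S)^2$ for an arbitrary $F\in\mathcal{F}$ through the decomposition $S=S^-\sqcup S^+$, where $S^-=S\cap T_N$ and $S^+=S\setminus T_N$, and splitting into two cases according to whether $F^N=F\cap T_N$ belongs to $\mathcal{D}_N(x)$ or not. In the easy case $F\cap T_N\notin\mathcal{D}_N(x)$, Lemma~\ref{D_N_lemma} yields $\|x\|_F<1-\eta_{x,N}$, and the triangle/Minkowski inequality on $\ell_2$-sums over $F$ immediately gives $\|x-y\|_F<2-\eta_{x,N}$. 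In the delicate case $F^N\in\mathcal{D}_N(x)$, Lemma~\ref{l_2_lemma} applied to $G=G(F^N)$ gives $\sum_{T\in G}(x_T-y_T)^2\leq 2\delta$, and I would combine this with the concentration bound $\|x\|_{G_0^+\cup G_1}\leq\varepsilon$ and $\|x\|_{F^+\cup F_1}\leq\varepsilon$, using the bijection $F_0\leftrightarrow G_0$ via $S^-=T_S^-$, to transfer the smallness of $\|x-y\|_G$ into smallness of $\|x-y\|_{F^N}$, after which the remaining contributions from segments outside $T_N$ are controlled by Minkowski together with $\|y\|_{JT}\leq 1$.

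The main obstacle is precisely this Case~B: the molecule $m_{x,G(F^N)}$ only directly controls $y$ on the $G$-family, whose extensions beyond $T_N$ need not coincide with the $F$-extensions, and $y$ could a priori have large mass on the disjoint family $F^+\cup F_1$ of segments outside $T_N$. The key point to push through is that, since $F^N\cup F^+\cup F_1$ is a disjoint family and $F^N$ is already charged almost fully by $x$ (hence, via the slice inequality, almost fully by $y$ as well up to $O(\sqrt\delta+\sqrt\varepsilon)$), the residual quantity $\|y\|_{F^+\cup F_1}$ is bounded away from $1$ by an amount depending only on $\varepsilon$; inserting this into the Minkowski split $\|x-y\|_F\leq\|x-y\|_{F^N}+\|x\|_{F^+\cup F_1}+\|y\|_{F^+\cup F_1}$ then yields $\|x-y\|_F<2-c(\varepsilon)$ uniformly in $F$ for $\delta$ sufficiently small, contradicting that $x$ is a $\Delta$-point.
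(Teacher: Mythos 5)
Your overall skeleton is the same as the paper's: concentrate $x$ on $T_N$, pick a representative $F_{\mathcal R}\in\mathcal D(x)$ for each $F\in\mathcal D_N(x)$, average the molecules $m_{x,F_{\mathcal R}}$, apply Lemma~\ref{intersection_lemma}, and dispose of families $G$ with $G\cap T_N\notin\mathcal D_N(x)$ via Lemma~\ref{D_N_lemma}. The gap is in your treatment of the delicate case, and it is located exactly where you flag the ``main obstacle''. Your argument needs either that $\|x-y\|_{F^N}$ is small or that $\|y\|_{F^+\cup F_1}$ is bounded away from $1$, and you derive both from the claim that the slice inequality forces $y$ to charge the truncated family $F^N$ almost fully. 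That claim is false: the molecule $m_{x,G}$ only sees the sums $y_T$ over the \emph{full} representative segments $T\in G$, not the truncated sums $y_{T\cap T_N}$. A $y\in B_{JT}$ which carries all of its mass on the tails $T\setminus T_N$ with $y_{T\setminus T_N}=x_T$ satisfies $y_T=x_T$ for every $T\in G$, hence lies in $S(m_{x,G},\delta)$ for every $\delta>0$, yet has $y_{S^-}=0$ for all $S^-\in F^N$ and $\|y\|_{F^+}=1$. Without the false claim, Minkowski only gives $\|x-y\|_F\le \bigl(\|x\|_{F^N}+\|y\|_{F^N}\bigr)+\|x\|_{F^+\cup F_1}+\|y\|_{F^+\cup F_1}\le 1+a+\varepsilon+b$ with $a^2+b^2\le 1$, and $a+b$ can be as large as $\sqrt2$, so the bound exceeds $2$.

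The paper closes this by a finer decomposition and by aiming at a weaker, but sufficient, target. It splits $G$ into four subfamilies according to how each segment $S$ relates to its representative $S_{\mathcal R}$ (contained in $T_{N-1}$; disjoint from $T_N$; same branch with $S\subset S_{\mathcal R}$ or $S_{\mathcal R}\subset S$; splitting branches), writes each difference $x_S-y_S$ as a signed combination of terms controlled either by Lemma~\ref{l_2_lemma} (Claim~A) or by the concentration of $x$ on $T_N$ (Claim~B), and then uses that the various residual segments ($G_2$, the set differences in $G_3$, and the two tails $S^-$, $S_{\mathcal R}^-$ in $G_4$) form mutually disjoint families, so their $y$-masses share a total budget of $\norm{y}_{JT}^2\le 1$ each; in the splitting case the cross term $2\sqrt{\sum y_{S^-}^2}\sqrt{\sum y_{S_{\mathcal R}^-}^2}$ is bounded by $1$ since $t\sqrt{1-t^2}\le\frac12$. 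The outcome is $\sum_{S\in G}(x_S-y_S)^2\le 3+\gamma(\varepsilon)$, i.e.\ $\|x-y\|\le\sqrt{3+\gamma}<2$. Note that the proof neither needs nor can obtain $\|x-y\|\le 2-c(\varepsilon)$ with $c(\varepsilon)\to0$ replaced by something like ``$\|x-y\|$ small'': the exhibited $y$ above is at distance about $\sqrt2$ from $x$, which is consistent with the paper's bound but not with the intermediate estimates you propose.
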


\begin{proof}
  Let us fix some $x\in S_{JT}$ and let us assume that $x$ has
  infinite support. Let us also fix some $\varepsilon > 0$ and let us
  take some $N\geq 1$ for which $x$ is almost concentrated on $T_N$ in
  the sense of the previous lemma.

  For every $F$ in  $\mathcal{D}_N(x)$ we pick a representative family
  $F_\mathcal{R}$ in $\mathcal{D}(x)$ which satisfy
  $F_\mathcal{R} \cap T_N = F$. This means that for every
  $S \in F$ there is (a unique) $S_\mathcal{R} \in F_\mathcal{R}$
  such that $S_\mathcal{R}\cap T_N = S$. For every such $F$
  we define $x_F^*=m_{x,F_\mathcal{R}}$ to be the molecule associated
  to $x$ and $F_\mathcal{R}$ and we let $x^*$ be the average of
  the $x_F^*s$ with $F \in \mathcal{D}_N(x)$.
  Since each
  $x_F^*$ is in $D(x)$, Lemma~\ref{intersection_lemma} tells us that
  $x^*\in D(x)$ and that
  \begin{equation*}
    S\left(x^*, \frac{\varepsilon^2}{2n}\right)
    \subset \bigcap_{F\in \mathcal{D}_N(x)}
    S\left(x_F^*,\frac{\varepsilon^2}{2}\right)
  \end{equation*}
  for $n = \abs{\mathcal{D}_N(x)}$.
  We now want to get a uniform bound for
  $\norm{x - y}_{JT}$ on
  $S\left(x^*, \frac{\varepsilon^2}{2n}\right)$.

  So let us take
  $y \in S\left(x^*, \frac{\varepsilon^2}{2n}\right)$
  and let us fix some $G\in \mathcal{F}$.
  First observe that Lemma~\ref{D_N_lemma} allows us to get rid of the
  case $G \cap T_N\notin\mathcal{D}_N(x)$ exactly as in the finite
  support proof because it yields
  \begin{equation*}
    \sum_{S\in G}(x_S-y_S)^2
    \leq
    (2 - \eta_{x,N})^2.
  \end{equation*}
  So let us assume that $F = G\cap T_N$ belongs to $\mathcal{D}_N(x)$.
  We will split the family $G$ into $4$ disjoint subfamilies
  $G_i$, $1\leq i\leq 4$, and we will estimate separately the sums
  $\sum_{S\in G_i}(x_S-y_S)^2$. For this
  we will use repeatedly the two following inequalities.

\bigbreak

\textbf{Claim $A$.}
Let $H$ be a subfamily of $F_\mathcal{R}$.
Then
\begin{equation*}
  \sum_{S\in H} (x_S - y_S)^2 \leq  \varepsilon^2.
\end{equation*}

\bigbreak

\begin{proof}[Proof of claim $A$]
  Since segments in $H$ are in $F_\mathcal{R}$
  and since $y \in S\left(x_F^*, \frac{\varepsilon^2}{2}\right)$
  with $x_F^* = m_{x,F_\mathcal{R}}$
  Lemma~\ref{l_2_lemma} yields
  \begin{equation*}
    \sum_{S\in H} (x_S - y_S)^2
    \leq \sum_{S\in F_\mathcal{R} }(x_S - y_S)^2
    \leq \frac{2\varepsilon^2}{2}=\varepsilon^2.
  \end{equation*}
\end{proof}

\bigbreak

\textbf{Claim $B$.}
Let $H$ be a family of disjoint segments of $T$ which do not intersect
$T_N$. Then
\begin{equation*}
  \sum_{S\in H} (x_S - y_S)^2 \leq
  \sum_{S\in H}y_S^2  + \varepsilon^2 + 2\varepsilon.
\end{equation*}

\bigbreak

\begin{proof}[Proof of claim $B$]
  Since segments in $H$ do not intersect $T_N$ we have,
  by our initial choice of $N$,
  $\sum_{S\in H} x_S^2 \leq \varepsilon^2$ and so using Minkowski's inequality
  \begin{align*}
    \sum_{S\in H} (x_S-y_S)^2
    &\leq
    \sum_{S\in H} x_S^2 + \sum_{S\in H}y_S^2
    + 2\sqrt{\sum_{S\in H} x_S^2} \sqrt{\sum_{S\in H}y_S^2} \\
    &\leq
    \sum_{S\in H}y_S^2 + \varepsilon^2 + 2\varepsilon.
\end{align*}
\end{proof}

Now let us split the family $G$ and let us start the computations of
the corresponding sums.

\bigbreak

\textbf{Claim 1.}
Let $G_1 =\{S \in G:\ S\subset T_{N-1}\}$.
There is a $\gamma_1 = \gamma_1(\varepsilon)$ such that
\begin{equation*}
  \sum_{S \in G_1} (x_S - y_S)^2
  \leq \gamma_1.
\end{equation*}

\bigbreak

\begin{proof}[Proof of Claim~1]
  Since segments in $G_1$ are contained in $T_{N-1}$, every segment $S$
  in $G_1$ has to be equal to its representative $S_\mathcal{R}$
  because $S_\mathcal{R} \cap T_N = S$. This means that $G_1$ is a
  subfamily of $F_\mathcal{R}$ and the result follows directly from
  Claim~$A$.
\end{proof}

\bigbreak

\textbf{Claim 2.}
Let $G_2 = \{S\in G:\ S\subset T\backslash T_N\}$.
There is a $\gamma_2 = \gamma_2(\varepsilon)$ such that
\begin{equation*}
  \sum_{S\in G_2} (x_S - y_S)^2
  \leq \sum_{S\in G_2} y_S^2  + \gamma_2.
\end{equation*}

\bigbreak

\begin{proof}[Proof of Claim~2]
  Since segments in $G_2$ do not intersect $T_N$ the result follows
  directly from Claim~$B$.
\end{proof}

The remaining segments of $G$ are those that intersect the
$N^{\text{th}}$ level of $T$. For those, we will distinguish between
segments $S$ such that $S$ and its representative $S_\mathcal{R}$ are
on the same branch of $T$ and those for which $S$ and $S_\mathcal{R}$ split at some higher level.  So let $G_3$ be the subset of the remaining segments of $G$ satisfying the first condition and let $G_4$ be the subset of those satisfying the second condition.

Note that if $S$ is a segment in $G_3$, then since $S$ and
$S_\mathcal{R}$ are on the same branch and are equal in $T_N$ we have
either $S\subset S_{\mathcal{R}}$ or $S_\mathcal{R}\subset S$ and the
complements are in either case segments which do not intersect
$T_N$. Thus.

\bigbreak

\textbf{Claim 3.}
Let $G_{3,1} = \{ S\in G_3:\ S = S_{\mathcal{R}}\}$,
let $G_{3,2} = \{ S\in G_3:\ S \subsetneqq S_{\mathcal{R}}\}$
and let $G_{3,3}=\{ S\in G_3:\ S_\mathcal{R} \subsetneqq S\}$.
There is a $\gamma_3 = \gamma_3(\varepsilon)$ such that
\begin{equation*}
  \sum_{S\in G_3} (x_S - y_S)^2
  \leq
  \sum_{S\in G_{3,2}} y_{S_\mathcal{R}\backslash S}^2
  + \sum_{S\in G_{3,3}} y_{S\backslash S_\mathcal{R}}^2
  + \gamma_3.
\end{equation*}

\bigbreak

\begin{proof}[Proof of Claim~3]
  By definition $G_{3,1}$ is a subfamily of $F_{\mathcal{R}}$
  so Claim~$A$ yields
  \begin{equation*}
    \sum_{S\in G_{3,1}} (x_S - y_S)^2 \leq \varepsilon^2.
  \end{equation*}
  Now let us deal with $G_{3,2}$.
  By Claim~$A$ we know that
  \begin{equation*}
    \sum_{S\in G_{3,2}} (x_{S_\mathcal{R}} - y_{S_\mathcal{R}})^2
    \leq \varepsilon^2.
  \end{equation*}
  Moreover, for every $S\in G_{3,2}$, we know that the segment
  $S_\mathcal{R} \backslash S$ does not intersect $T_N$ and
  thus Claim~$B$ yields
  \begin{equation*}
    \sum_{S\in G_{3,2}}
    (x_{S_\mathcal{R}\backslash S} - y_{S_\mathcal{R}\backslash S})^2
    \leq
    \sum_{S\in G_{3,2}} y_{S_\mathcal{R}\backslash S}^2
    + \varepsilon^2 + 2\varepsilon.
  \end{equation*}
  So finally we get
  \begin{align*}
    \sum_{S\in G_{3,2}} (x_S - y_S)^2
    &=
    \sum_{S\in G_{3,2}}
    [
    (x_{S_\mathcal{R}} - y_{S_\mathcal{R}})
    - (x_{S_\mathcal{R}\backslash S} - y_{S_\mathcal{R}\backslash S})
    ]^2 \\
    &\leq
    \sum_{S\in G_{3,2}}
    (x_{S_\mathcal{R}} - y_{S_\mathcal{R}})^2
    +
    \sum_{S \in G_{3,2}}
    (x_{S_\mathcal{R}\backslash S} - y_{S_\mathcal{R}\backslash S})^2 \\
    &\quad\quad\quad
    +
    2\sqrt{
      \sum_{S\in G_{3,2}}(x_{S_\mathcal{R}} - y_{S_\mathcal{R}})^2
    }
    \sqrt{
      \sum_{S\in G_{3,2}}
      ( x_{S_\mathcal{R}\backslash S} -
      y_{S_\mathcal{R}\backslash S} )^2
    }
    \\
    &\leq
    \sum_{S \in G_{3,2}} y_{S_\mathcal{R}\backslash S}^2
    + \varepsilon^2 + (\varepsilon^2 + 2\varepsilon)
    + 4\varepsilon.
  \end{align*}
  Similar computations allow us to deal with $G_{3,3}$ and the
  conclusion follows by combining the $3$ inequalities.
\end{proof}

Finally let us take $S\in G_4$. Since $S$ and its representative
$S_{\mathcal{R}}$ are equal on $T_N$ the segments
$S^- = S \backslash (S\cap S_\mathcal{R})$ and
$S_\mathcal{R}^- =
S_\mathcal{R} \backslash (S \cap S_\mathcal{R})$
do not intersect $T_N$ (and are non-empty). Thus.
\bigbreak

\textbf{Claim 4.}
There is a $\gamma_4 = \gamma_4(\varepsilon)$ such that
\begin{equation*}
  \sum_{S\in G_4} (x_S - y_S)^2
  \leq
  1 + \sum_{S\in G_4} y_{S^-}^2
  + \sum_{S\in G_4} y_{S_\mathcal{R}^-}^2 + \gamma_4.
\end{equation*}

\bigbreak

\begin{proof}[Proof of Claim~4]
  Using again Claim~$A$ and Claim~$B$, we have
  \begin{equation*}
    a=\sum_{S\in G_4} (x_{S_\mathcal{R}} - y_{S_\mathcal{R}})^2
    \leq \varepsilon^2,
  \end{equation*}
  \begin{equation*}
    b=\sum_{S\in G_4} (x_{S^-} - y_{S^-})^2
    \leq
    \sum_{S\in G_4} y_{S^-}^2 + \varepsilon^2 + 2\varepsilon,
  \end{equation*}
  and
  \begin{equation*}
    c=\sum_{S\in G_4} (x_{S_\mathcal{R}^-} - y_{S_\mathcal{R}^-})^2
    \leq \sum_{S\in G_4} y_{S_\mathcal{R}^-}^2
    + \varepsilon^2 + 2\varepsilon.
  \end{equation*}
  Thus we get, using the $\sqrt{u+v}\leq \sqrt{u}+\sqrt{v}$
  identity for the last term:
  \begin{align*}
    \sum_{S\in G_4} (x_S - y_S)^2
    &=
    \sum_{S\in G_4}
    [
    (x_{S_\mathcal{R}} - y_{S_\mathcal{R}})
    - (x_{S_\mathcal{R}^-} - y_{S_\mathcal{R}^-})
    + (x_{S^-} - y_{S^-})
    ]^2 \\
    &\leq  a+b+c+2\sqrt{ab}+2\sqrt{ac}+2\sqrt{bc} \\
    &\leq \eps^2+ \sum_{S\in G_4} y_{S^-}^2
    + \sum_{S\in G_4} y_{S_\mathcal{R}^-}^2+ 2(\varepsilon^2 + 2\varepsilon)
    + 4 \sqrt{\eps^2(1 + \varepsilon^2 + 2\varepsilon)} \\ 
    &\quad\quad\quad+ 2\sqrt{\sum_{S\in G_4} y_{S^-}^2}
    \sqrt{ \sum_{S\in G_4} y_{S_\mathcal{R}^-}^2}+ 2\sqrt{2(\varepsilon^2 + 2\varepsilon)
      +(\varepsilon^2 + 2\varepsilon)^2}
  \end{align*}
  so it only remains to show that
  \begin{equation*}
    2\sqrt{\sum_{S\in G_4} y_{S^-}^2}
    \sqrt{ \sum_{S\in G_4} y_{S_\mathcal{R}^-}^2} \leq 1.
  \end{equation*}
  To show this, observe that if you take two different segments $S$
  and $T$ in $G_4$, then you obviously have $S \cap T = \emptyset$ and
  $S_\mathcal{R} \cap T_\mathcal{R} = \emptyset$ since we work with
  families of disjoint segments, but you also have
  $S \cap T_\mathcal{R} = \emptyset$ and
  $T \cap S_\mathcal{R} = \emptyset$ since
  $S$ and $T$ have disjoint starting parts in $T_N$.
  Consequently:
  \begin{equation*}
    \sum_{S\in G_4} y_{S^-}^2
    +
    \sum_{S\in G_4} y_{S_\mathcal{R}^-}^2
    \leq
    \norm{y}_{JT}^2=1.
  \end{equation*}
  The conclusion follows since the function $f(x)=x\sqrt{1-x^2}$ has
  maximum $\frac{1}{2}$ on $[0,1]$ (attained at $\frac{1}{\sqrt{2}}$).
\end{proof}

Now letting $\gamma = \sum_{i=1}^4\gamma_i$
and combining the results from the $4$ claims, we obtain:
\begin{align*}
  \sum_{S\in G}(x_S - y_S)^2
  &\leq
  1 + \sum_{S\in G_2} y_S^2
  + \sum_{S\in G_{3,3}} y_{S\backslash S_\mathcal{R}}^2
  + \sum_{S\in G_4} y_{S^-}^2 \\
  &\quad\quad\quad
  + \sum_{S\in G_{3,2}} y_{S_\mathcal{R}\backslash S}^2
  + \sum_{S\in G_4} y_{S_\mathcal{R}^-}^2
  +\gamma,
\end{align*}
and since we work with families of disjoint segments we get
\begin{equation*}
  \sum_{S\in G}(x_S - y_S)^2 \leq 3 + \gamma.
\end{equation*}
Finally,
$\norm{x - y}_{JT} \leq
\max\{ 2-\eta_{x,N} \sqrt{3 + \gamma}\}$
and this quantity is strictly less than $2$ if we take a small enough
$\varepsilon$ since $\gamma$ goes to $0$ as $\varepsilon$ goes to $0$.
\end{proof}

\begin{rem}
  Here the segments involved can be infinite so the functional $x^*$ we
  slice with does not have to be in $JT_*$. This raises the following
  question.
\end{rem}

\begin{quest}
  Does $JT$ admit weak$^*$ $\Delta$-points?
\end{quest}

From the work of \cite{Gir} we know that the dual of the James tree
space $JT^*$ is AUC. Now $JT$ does not admit an equivalent norm whose dual norm is AUC$^*$. Indeed such norm would be AUS by the asymptotic duality. This is impossible because $JT$ is not Asplund (it
is separable while $JT^*$ is not), see
\cite[Proposition~2.4]{MR1888429}. In view of those observations
$JT^*$ would be a natural candidate for our study and the question of
the existence of $\Delta$- or Daugavet-points there, as well as the
question of the existence of non quasi-denting points would be
particularly relevant. Unfortunately it seems that computations are
still out of reach in this context, although the use of molecules
facilitates them, and we where only able to obtain a few partial
results even while trying to restrict ourselves to $J^*$.

\begin{lem}\label{predual-norm-est}
  Let $s,t\in T$ be two distinct points. For any $\varepsilon > 0$ and
  $z^*\in JT^*$ with $\{s,t\} \cap \supp(z^*) = \emptyset$ we have
  \begin{equation*}
    \|e_t^* - \varepsilon e_s^* + z^*\| > 1.
  \end{equation*}
\end{lem}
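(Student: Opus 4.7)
The plan is to exhibit an explicit test vector $x \in S_{JT}$ on which the functional $e_t^* - \varepsilon e_s^* + z^*$ takes a value strictly greater than $1$. The key observation that guides the choice of $x$ is that the hypothesis $\{s, t\} \cap \supp(z^*) = \emptyset$ gives $z^*(e_s) = z^*(e_t) = 0$, so $z^*$ will vanish on any vector lying in $\mathrm{span}\{e_s, e_t\}$. It is therefore natural to look for $x$ of the form $(e_t - \beta e_s)/\|e_t - \beta e_s\|_{JT}$ with $\beta > 0$ a small parameter to be chosen.

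The first step is to verify that $\|e_t - \beta e_s\|_{JT}^2 = 1 + \beta^2$ for every $\beta > 0$. This requires a short case analysis on whether $s$ and $t$ are comparable in $T$. If they are incomparable then no segment contains both $s$ and $t$, so the only nonzero contributions in the definition of the $JT$-norm come from segments containing exactly one of the two, and the supremum is attained by the family $\{\{s\}, \{t\}\}$ giving $\beta^2 + 1$. If $s$ and $t$ are comparable then the only extra competitor is a segment containing both $s$ and $t$, which contributes $(1 - \beta)^2 \le 1 + \beta^2$, so the supremum is again $1 + \beta^2$. Hence $x_\beta := (e_t - \beta e_s)/\sqrt{1 + \beta^2}$ is a norm one element of $JT$.

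The second step is a direct calculation: using $e_t^*(e_s) = e_s^*(e_t) = 0$ together with $z^*(e_s) = z^*(e_t) = 0$, one finds
\begin{equation*}
  (e_t^* - \varepsilon e_s^* + z^*)(x_\beta) = \frac{1 + \varepsilon \beta}{\sqrt{1 + \beta^2}}.
\end{equation*}
The inequality $(1 + \varepsilon\beta)/\sqrt{1 + \beta^2} > 1$ is equivalent, after squaring, to $2\varepsilon > (1 - \varepsilon^2)\beta$, which holds for every sufficiently small $\beta > 0$ when $\varepsilon < 1$, and for every $\beta > 0$ when $\varepsilon \ge 1$. Fixing any such $\beta$ yields $\|e_t^* - \varepsilon e_s^* + z^*\| \ge (e_t^* - \varepsilon e_s^* + z^*)(x_\beta) > 1$, as required. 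No genuine obstacle is expected; the only care needed is the elementary case analysis on $s$ and $t$ in the norm computation.
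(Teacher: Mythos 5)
Your proposal is correct and takes essentially the same approach as the paper: both arguments exhibit an explicit norm-one test vector in $\mathrm{span}\{e_s,e_t\}$ (with positive weight on $e_t$ and negative weight on $e_s$, so that $z^*$ vanishes on it) and check by a short computation, including the comparable/incomparable case analysis for the $JT$-norm, that the functional exceeds $1$ there. The only cosmetic difference is that the paper splits into the cases $\varepsilon\ge 1$ and $\varepsilon<1$ with two specific vectors, whereas you use a one-parameter family $x_\beta$ and choose $\beta$ small.
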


\begin{proof}
  Let $x^* = e_t^* - \varepsilon e_s^* + z^*$ for some $\varepsilon >
  0$ and $z^*$ supported in $T\backslash\{s,t\}$. Assume first that
  $\varepsilon \ge 1$, and
  let $\alpha = 1/\sqrt{2}$ and $x = \alpha e_t - \alpha e_s$.
  We have $\|x\| = 1$ and
  \begin{equation*}
    x^*(x) = \alpha + \varepsilon \alpha
    \geq 2/\sqrt{2} > 1.
  \end{equation*}
  Next if $\varepsilon < 1$ we let
  $\alpha = \sqrt{1-\varepsilon^2}$
  and $x = \alpha e_t - \varepsilon e_s$.
  We have $\|x\| = 1$ since
  $\alpha^2 + \varepsilon^2 = 1$
  and $(\alpha - \varepsilon)^2
  \le 1 - 2\alpha\varepsilon < 1$.
  Now
  \begin{equation*}
    x^*(x) = \alpha + \varepsilon^2
    > \alpha^2 + \varepsilon^2 = 1.
  \end{equation*}
  In both cases, $\|x^*\| > 1$.
\end{proof}

\begin{lem}
  In $JT^*$ every basis vector
  $e_t^*$ is an extreme point of $B_{JT^*}$,
  and therefore a weak$^*$ denting point.
\end{lem}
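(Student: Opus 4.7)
The plan is to first establish extremeness by a direct application of Lemma~\ref{predual-norm-est}, and then to deduce weak$^*$-denting by exhibiting explicit small weak$^*$-slices of $B_{JT^*}$ containing $e_t^*$.

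For extremeness, suppose $e_t^* = \tfrac{1}{2}(y^* + z^*)$ with $y^*, z^* \in B_{JT^*}$, and write $y^* = e_t^* + w^*$, $z^* = e_t^* - w^*$. I will show $w^* = 0$ in two steps. First, if $\eta := w^*(e_t) \neq 0$, then $\max(y^*(e_t), z^*(e_t)) = 1 + |\eta| > 1 = \norm{e_t}_{JT}$, contradicting $\norm{y^*}, \norm{z^*} \le 1$, so $w^*(e_t) = 0$. Second, fix $s \neq t$ and set $c := w^*(e_s)$; if $c \neq 0$, swap $y^*$ and $z^*$ if necessary so that the coefficient of $e_s^*$ in the selected functional is negative. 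Writing $\tilde w^* := w^* - c e_s^*$, we have $\tilde w^*(e_s) = \tilde w^*(e_t) = 0$, so $\supp(\tilde w^*) \cap \{s,t\} = \emptyset$. The selected functional then has the form $e_t^* - |c| e_s^* + (\pm \tilde w^*)$ with $|c| > 0$, and Lemma~\ref{predual-norm-est} gives that its norm is strictly greater than $1$, a contradiction. Since any element of $JT^*$ is determined by its values on the boundedly complete basis $\{e_s\}_{s \in T}$ of $JT$, the two steps force $w^* = 0$, hence $y^* = z^* = e_t^*$.

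For weak$^*$-denting, the naive slice $\{y^* : y^*(e_t) > 1 - \delta\}$ is too large: for any branch $\beta \ni t$ the molecule $\mathbbm{1}_\beta$ lies in every such slice (since $\mathbbm{1}_\beta(e_t) = 1$) while $\norm{\mathbbm{1}_\beta - e_t^*}$ is bounded away from $0$ (by $1$ if $t$ is the root, and by $\tfrac{4}{3}$ otherwise, as witnessed on $x = \alpha(e_{s_1} + e_{s_2}) - \tfrac{\alpha}{2} e_t$ with $s_1 < t < s_2$ on $\beta$). The remedy is to use the slicing functional $x_\alpha := e_t - \alpha \sum_{t' \in C(t)} e_{t'}$, where $C(t)$ denotes the set of children of $t$ in $T$. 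A short computation with disjoint families of segments through $\{t\} \cup C(t)$ yields $\norm{x_\alpha}_{JT}^2 = 1 + 2\alpha^2$, so after normalization $e_t^*(x_\alpha/\norm{x_\alpha}) = 1/\sqrt{1+2\alpha^2}$ exceeds $1 - \delta$ as long as $\alpha \lesssim \sqrt{\delta}$, while $\mathbbm{1}_\beta(x_\alpha/\norm{x_\alpha}) = (1-\alpha)/\sqrt{1+2\alpha^2}$ for every $\beta \ni t$, which falls below $1 - \delta$ as soon as $\alpha \gtrsim \delta$. Choosing $\alpha \asymp \sqrt{\delta}$ then combines both bounds, and an iterated application of Lemma~\ref{predual-norm-est} to the coordinates of $y^* - y^*(e_t) e_t^*$ together with the molecule decomposition provided by Theorem~\ref{thm:BJT*=clcoM} should yield $\diam \to 0$ as $\delta \to 0$, giving weak$^*$-denting.

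The main obstacle is the second part. Lemma~\ref{predual-norm-est} gives only a strict inequality with a quantitatively weak gap of order $\varepsilon^2/2$ for small $\varepsilon$, so bounding $\norm{y^* - e_t^*}$ uniformly for $y^*$ in the chosen slice requires balancing the $\ell_2$-weights on the disjoint segments appearing in the molecule decomposition of $y^*$ against the $\alpha$-dependent slicing geometry. It is not immediate that a single slice defined by $x_\alpha$ alone suffices to exclude every molecule of weight close to $1$ at $t$, and one may need to replace $x_\alpha$ by a finer perturbation extending further down the subtree below $t$ to get uniform control.
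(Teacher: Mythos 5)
Your proof of extremeness is correct and is essentially the paper's own argument: after showing $w^*(e_t)=0$, reduce to a strictly negative coefficient at some $s\neq t$ (swapping the two halves of the midpoint decomposition if necessary) and invoke Lemma~\ref{predual-norm-est}. The problem is the second half of the statement. Your construction of small weak$^*$ slices via $x_\alpha = e_t - \alpha\sum_{t'\in C(t)} e_{t'}$ is not carried to completion, and you say so yourself, so the ``therefore a weak$^*$ denting point'' part is not proved. Beyond the quantitative issues you flag, there is a structural gap in the strategy: even if you showed that every \emph{molecule} lying in the slice $S(x_\alpha/\norm{x_\alpha},\delta)$ is norm-close to $e_t^*$, this would not bound the diameter of the slice, because a generic element of the slice is a limit of convex combinations of molecules that need not themselves belong to that slice. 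So ``excluding molecules of weight close to $1$ at $t$'' is not by itself a route to denting, and no single slicing functional should be expected to do the job directly.

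The missing idea is to decouple the two tasks. Since $e_t^*$ is an extreme point of the weak$^*$ compact convex set $B_{JT^*}$, Choquet's lemma (see \cite[Lemma~3.69]{MR2766381}) says that the relatively weak$^*$ open slices containing $e_t^*$ form a neighborhood base of $e_t^*$ for the weak$^*$ topology of $B_{JT^*}$. Hence it suffices to know that $e_t^*$ is a point of weak$^*$-to-norm continuity of the identity on $B_{JT^*}$, i.e.\ that for every $\eps>0$ some weak$^*$ neighborhood $V$ of $e_t^*$ satisfies $V\cap B_{JT^*}\subset B(e_t^*,\eps)$; Choquet's lemma then produces a weak$^*$ slice inside $V\cap B_{JT^*}$, which witnesses denting. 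This continuity statement is a known property of $JT$ (Proposition~3.d.19 in \cite{MR1474498}), and that citation is how the paper closes the argument. If you insist on a self-contained proof, the object you should control is a weak$^*$ neighborhood --- finitely many linear conditions, which is much more flexible than a single slice --- and only afterwards convert it into a slice via extremeness.
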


\begin{proof}
  Assume that $x^*$ and $y^*$ are norm one
  elements such that $e_t^* = \frac{x^*+y^*}{2}$.
  Then we have $x^*(e_t) = y^*(e_t) = 1$
  and $x^*(e_s) = - y^*(e_s)$ for all $s \neq t$.
  If we have
  $x^*(e_s) < 0$ for some $s \neq t$,
  then with $z^* = x^* - (e^*_t + x^*(e_s)e^*_s)$
  we get
  \begin{equation*}
    \|x^*\| = \|e^*_t + x^*(e_s)e^*_s + z^*\| > 1
  \end{equation*}
  by Lemma~\ref{predual-norm-est}.
  Hence $x^* = y^* = e^*_t$.

  By Proposition~3.d.19 in \cite{MR1474498}
  we know that $e^*_t$ is a point of weak$^*$
  to norm continuity on the unit ball of $JT^*$ so the conclusion
  follows by applying Choquet's lemma (see for example
  \cite[Lemma~3.69]{MR2766381}) which tells that the weak$^*$ slices
  form a neighborhood basis of $e_t^*$. Indeed the continuity of the
  identity map at $e_t^*$ then ensures that any ball around $e_t^*$
  contains a weak$^*$ slice containing $e_t^*$.
\end{proof}

\begin{cor}
  No molecule in $JT^*$ is a weak$^*$ Daugavet point.
\end{cor}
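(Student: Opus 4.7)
The plan is to mimic the argument used in the preceding corollary that $JT$ does not admit weak$^*$ Daugavet-points: I would show that every molecule $m^*\in S_{JT^*}$ lies at distance strictly less than $2$ from a weak$^*$ denting point of $B_{JT^*}$, and then conclude via the weak$^*$ analog of \cite[Proposition~3.1]{JRZ} already invoked there. In view of the preceding lemma, both $e_t^*$ and $-e_t^*$ are weak$^*$ denting points for every $t\in T$, so the task reduces to producing $t\in T$ and $\sigma\in\{-1,1\}$ such that $\norm{m^*-\sigma e_t^*}_{JT^*}<2$.

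Write $m^*=\sum_i\lambda_i\mathbbm{1}_{S_i}$ with $\lambda\in B_{\ell_2}$. Since a weak$^*$ Daugavet-point is in particular a unit vector we may assume $m^*\neq 0$; pick any index $i_0$ with $\lambda_{i_0}\neq 0$, set $\sigma=\sgn(\lambda_{i_0})$ and choose any $t\in S_{i_0}$. The crucial observation is that $m^*-\sigma e_t^*$ is again a molecule. Indeed, removing $t$ from $S_{i_0}$ leaves at most two segments $S_{i_0}^-$ and $S_{i_0}^+$ of $T$, so
\begin{equation*}
  m^*-\sigma e_t^* = \sum_{i\neq i_0}\lambda_i\mathbbm{1}_{S_i}
  +(\lambda_{i_0}-\sigma)e_t^*
  +\lambda_{i_0}\mathbbm{1}_{S_{i_0}^-}
  +\lambda_{i_0}\mathbbm{1}_{S_{i_0}^+}
\end{equation*}
is a linear combination of indicators along a disjoint family of segments of $T$.

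It then remains to carry out a short $\ell_2$ estimate on the new coefficient vector. Observing that $|\lambda_{i_0}-\sigma|=1-|\lambda_{i_0}|$, its sum of squares is bounded by
\begin{equation*}
  (1-\lambda_{i_0}^2)+(1-|\lambda_{i_0}|)^2+2\lambda_{i_0}^2
  = 2\bigl(\lambda_{i_0}^2-|\lambda_{i_0}|+1\bigr)\leq 2.
\end{equation*}
Combined with the Cauchy--Schwarz bound used in the proof of Lemma~\ref{l_2_lemma}, namely that $\norm{\sum_j\mu_j\mathbbm{1}_{T_j}}_{JT^*}\leq\norm{(\mu_j)}_{\ell_2}$ for any molecule, this yields $\norm{m^*-\sigma e_t^*}_{JT^*}\leq\sqrt{2}<2$, so $m^*$ cannot be a weak$^*$ Daugavet-point. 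I do not anticipate any serious obstacle; the whole argument rests on the simple structural fact that subtracting a basis functional from a molecule produces a new molecule whose $\ell_2$ coefficient vector remains uniformly controlled.
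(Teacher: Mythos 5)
Your proof is correct, and the overall strategy is the same as the paper's: both arguments show that a molecule lies at distance strictly less than $2$ from one of the weak$^*$ denting points $\pm e_t^*$ and then invoke the weak$^*$ version of the Jung--Rueda Zoca observation. Where you genuinely differ is in the distance estimate. The paper peels off the \emph{starting point} $s_{i_0}$ of a chosen segment and uses the triangle inequality, $\norm{x^*-e_{s_{i_0}}^*}\leq (1-\abs{\lambda_{i_0}})+\norm{\sum_i\lambda_i\mathbbm{1}_{T_i}}\leq 2-\abs{\lambda_{i_0}}$, which forces a separate treatment of the case $\abs{\lambda_{i_0}}=1$ and gives a bound that degenerates towards $2$ as $\lambda_{i_0}\to 0$. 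You instead subtract $\sigma e_t^*$ at an arbitrary node $t\in S_{i_0}$, observe that the result is still supported on a disjoint family of segments (with $S_{i_0}$ split into at most three pieces), and bound its $JT^*$-norm by the $\ell_2$-norm of the new coefficient vector via the Cauchy--Schwarz estimate from Lemma~\ref{l_2_lemma}. This yields the uniform bound $\sqrt{2}$, avoids the case distinction, and handles signs cleanly. One small imprecision: $m^*-\sigma e_t^*$ need not literally be a molecule in the sense of the paper, since its coefficient vector has $\ell_2$-norm up to $\sqrt{2}>1$; but the inequality $\norm{\sum_j\mu_j\mathbbm{1}_{T_j}}_{JT^*}\leq\norm{(\mu_j)}_{\ell_2}$ holds for any square-summable coefficients over disjoint segments, so the estimate is unaffected. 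You should also say a word about why some $\lambda_{i_0}\neq 0$ exists (a weak$^*$ Daugavet-point is a unit vector, so $m^*\neq 0$), which you do.
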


\begin{proof}
  Let $x^*=\sum_{i\geq 1} \lambda_i\1_{S_i}$ where $S_i$ are disjoint
  segments of $T$ and $\sum_{i\geq 1}\lambda_i^2\leq 1$. If there is
  an $i_0\geq 1$ such that $\lambda_{i_0}=1$, then
  $x^*=\1_{S_{i_0}}$. Since the biorthogonal functionals are
  (weak$^*$) denting points we may assume that $S_{i_0}$ contains at
  least two points of $T$. Now if we let $s_{i_0}$ be the starting
  point of this segment, and if we let
  $T_{i_0}=S_{i_0}\backslash\{s_{i_0}\}$, we have
  $\norm{x^*-e_{s_{i_0}}^*}=\norm{\1_{T_{i_0}}}=1$ so $x^*$ is at
  distance strictly less than $2$ from a (weak$^*$) denting point in
  $JT^*$ and thus cannot be a weak$^*$ Daugavet point.

  Next let us assume that $\lambda_i\in (0,1)$ for every $i\geq 1$ and
  let us fix any $i_0\geq 1$. We define as above $s_{i_0}$ and
  $T_{s_{i_0}}$ (which might eventually be empty) and we let $T_i=S_i$
  for any $i\neq i_0$. Since the $T_i$ are disjoint segments of $T$ we
  have $\norm{x^*-e_{s_{i_0}}^*}\leq
  1-\abs{\lambda_{i_0}}+\norm{x^*-\lambda_{s_{i_0}}e_{s_{i_0}}^*}
  =1-\abs{\lambda_{i_0}}+\norm{\sum_{i\geq 1}\lambda_i\1_{T_i}}\leq
  2-\lambda_{s_{i_0}}<2$ and the conclusion follows as above.
\end{proof}

In light of the above we ask:

\begin{quest}
  Can molecules in $JT^*$ be $\Delta$-points?
\end{quest}

Note that the observation from \cite[Remark 2.4]{JRZ}  applied to the set
of molecules in $JT^*$ which satisfies as mentioned $\cconv
\mathcal{M}=B_{X^*}$ gives the following simplification.

\begin{lem}
  Let $x^*\in S_{JT^*}$. Then $x^*$ is a $\Delta$-point if and only if
  we can find, for every  $\eps>0$ and for every slice $S$ of
  $B_{JT^*}$, some $m\in \mathcal{M}\cap S$ such that
  $\norm{x - m} \geq 2-\eps$.
\end{lem}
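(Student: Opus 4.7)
The reverse implication is immediate: if for every $\eps>0$ and every slice $S$ of $B_{JT^*}$ containing $x^*$ we can produce a molecule $m\in \mathcal{M}\cap S$ with $\norm{x^*-m}\geq 2-\eps$, then in particular $m\in B_{JT^*}\cap S$ realises the required distance and $x^*$ satisfies the definition of a $\Delta$-point. The content therefore lies in the forward implication, and for this I would combine the Hahn--Banach reformulation of the $\Delta$-point property with the norm density $\overline{\conv}\,\mathcal{M}=B_{JT^*}$ provided by Schachermayer's Theorem~\ref{thm:BJT*=clcoM}.

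Fix $\eps>0$ and a slice $S(f,\delta)$ of $B_{JT^*}$ with $x^*\in S(f,\delta)$. Since $f(x^*)>1-\delta$ I can pick $\delta'\in(0,\delta)$ so small that $x^*\in S(f,\delta')$ still, and the $\Delta$-point hypothesis applied to $S(f,\delta')$ with tolerance $\eps/3$ yields $y\in S(f,\delta')$ with $\norm{x^*-y}\geq 2-\eps/3$. Now invoke Theorem~\ref{thm:BJT*=clcoM} to approximate $y$ in norm by a finite convex combination of molecules, $\tilde y=\sum_{i=1}^n\lambda_i m_i$ with $m_i\in\mathcal{M}$, $\lambda_i\geq 0$, $\sum_i\lambda_i=1$, and $\norm{y-\tilde y}<\eta$ for a parameter $\eta>0$ to be chosen shortly.

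The last step is a pigeonhole on the decomposition. Set
\begin{equation*}
  A=\{i:\,f(m_i)\leq 1-\delta\},
  \qquad
  B=\{i:\,\norm{x^*-m_i}<2-\eps\}.
\end{equation*}
Splitting the inequality $f(\tilde y)>1-\delta'-\eta$ across $A$ and its complement, and using $f(m_i)\leq 1$ off $A$, bounds $\sum_{i\in A}\lambda_i$ by a quantity of order $(\delta-\delta'-\eta)^{-1}(\delta'+\eta)$, which tends to $0$ as $\delta',\eta\to 0$. Similarly, from $\sum_i \lambda_i\norm{x^*-m_i}\geq \norm{x^*-\tilde y}\geq 2-\eps/3-\eta$ and $\norm{x^*-m_i}\leq 2$ off $B$, one bounds $\sum_{i\in B}\lambda_i$ by a quantity of order $(\eps/3+\eta)/\eps$, which stays below $1/2$ for small $\eta$. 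Choosing first $\delta'$ and then $\eta$ small enough so that $\sum_{i\in A\cup B}\lambda_i<1$, I obtain an index $i_0\notin A\cup B$; the molecule $m:=m_{i_0}$ then lies in $\mathcal{M}\cap S(f,\delta)$ and satisfies $\norm{x^*-m}\geq 2-\eps$, as required.

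The main obstacle is precisely this bookkeeping: a priori the indices realising ``$m_i$ in the slice'' need not coincide with the indices realising ``$m_i$ far from $x^*$'', and so one cannot simply extract a good $m_i$ from one average. The way out is to allow oneself a buffer in the slice parameter (replacing $\delta$ by a smaller $\delta'$) and a buffer in the approximation $y\approx\tilde y$, which gives enough room for the two mass estimates to combine to something strictly less than $1$.
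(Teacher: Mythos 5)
Your overall route is sound and genuinely different from what the paper does: the paper proves this lemma by simply invoking \cite[Remark~2.4]{JRZ} together with Schachermayer's theorem that $\cconv \mathcal{M}=B_{JT^*}$, whereas you reprove the underlying general principle from scratch via a double pigeonhole on a convex combination of molecules approximating a far point of a slightly shrunken slice. (You also, correctly, read the statement as quantifying over slices \emph{containing} $x^*$; taken literally, the printed statement would characterize Daugavet-points instead.) The reverse implication is indeed immediate, and the idea of buffering both the slice parameter and the approximation is exactly the right one.

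There is, however, one quantitative step in the forward direction that fails as written. You assert that the bound on $\sum_{i\in A}\lambda_i$ ``tends to $0$ as $\delta',\eta\to 0$'' and that one may ``choose first $\delta'$\,\ldots\,small enough''. But $\delta'$ is constrained from below: $x^*\in S(f,\delta')$ forces $\delta'>1-f(x^*)$, and $1-f(x^*)$ may be arbitrarily close to $\delta$. The correct estimate is $\sum_{i\in A}\lambda_i<(\delta'+\eta)/\delta$, which is only bounded away from $1$ by the possibly tiny margin $(\delta-\delta'-\eta)/\delta$. With your fixed tolerance $\eps/3$ you get $\sum_{i\in B}\lambda_i\le 1/3+\eta/\eps$, and the two bounds need not sum to less than $1$ (try $f(x^*)=1-0.9\,\delta$). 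The repair is easy and preserves your structure: set $\gamma=\tfrac12\bigl(f(x^*)-(1-\delta)\bigr)>0$ and $\delta'=\delta-\gamma$, so that $x^*\in S(f,\delta')$, and only \emph{then} apply the $\Delta$-point hypothesis with tolerance $\eps'<\eps\gamma/(2\delta)$ in place of $\eps/3$. This yields $\sum_{i\in B}\lambda_i<\gamma/(2\delta)+\eta/\eps$, whence
\begin{equation*}
  \sum_{i\in A\cup B}\lambda_i
  < 1-\frac{\gamma}{\delta}+\frac{\gamma}{2\delta}+\eta\left(\frac{1}{\delta}+\frac{1}{\eps}\right)
  = 1-\frac{\gamma}{2\delta}+\eta\left(\frac{1}{\delta}+\frac{1}{\eps}\right)<1
\end{equation*}
for $\eta$ small, and your pigeonhole goes through. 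The moral is that the order of choices must be: first $\delta'$ (dictated by $f(x^*)$), then the tolerance $\eps'$, then $y$, then $\eta$.
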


Although it is possible to do some computations in very specific cases
(for example when $x^*$ is a molecule supported on two segments) it
seems to be difficult to estimate the distance between two molecules
in general even when they are supported on the same branch of
$T$. Moreover, it is not completely trivial to distinguish norm one
molecules in $\mathcal{M}$ and to find suitable norming elements in
$JT$.

Let us mention that
with techniques similar to those for $JT,$ it is possible to prove
that also $J$ with the equivalent norms $\|\cdot\|_{\mathcal
  J}$ and $\|\cdot\|_0$ as given on p. 62 in \cite{AK}, fail to
contain $\Delta$-points.

However, if we replace the binary tree $T$ by the countably branching tree
$T_\infty=\{\emptyset\}\cup\bigcup_{n\geq 1}\Natural^n$ in the
definition of the James tree space we obtain the Banach space
$JT_\infty$ originally introduced in \cite{GM} which shares some of
the basic properties of $JT,$ but presents a few striking
dissimilarities. Indeed it is proved in \cite{GM} that the predual of
$JT_\infty$ fails the PCP and as a consequence admits no equivalent
AUC norm (in fact $\overline{\delta}_{\abs{.}}(\frac{1}{2})=0$ for any
equivalent norm $\abs{.}$ on $(JT_\infty)_*$, see \cite{Gir}) while it
satisfy the so called convex PCP, see \cite{GMS}.  For our study we
have as before that $JT_\infty$ is $2$-AUC$^*$ and admits no
Daugavet-points,  but our proof of non-existence of $\Delta$-points
fails for infinitely supported elements since restricting the support
of such element to a finite level of the tree does not necessarily
provide finitely many nodes of $T_\infty$ anymore. It is thus natural
to ask the following.

\begin{quest}
  Does $JT_\infty$ admit $\Delta$-points?
\end{quest}

\section{Almost Daugavet- and Delta-points}

Looking at Corollary~\ref{cor:unif-nsq-no-Delta}
it is natural to ask:

\begin{quest}\label{quest:delta_SR}
  Do all superreflexive Banach spaces fail to contain $\Delta$-points?
\end{quest}

This question is not trivial since, as we have seen, even
superreflexive spaces whose norm is not uniformly non-square have
small slices of the unit ball with diameter arbitrary close to $2$.
However, for a superreflexive space $X$ we know that the dual
of an ultrapower $X^{\mathcal{U}}$ is isometric to
$(X^*)^{\mathcal{U}}$, and this opens the door to using
ultrafilter limits. Since $X^{\mathcal{U}}$ is also
superreflexive we do not leave the context of superreflexive
spaces when passing to ultrapowers. Motivated by this we introduce the following
definitions which are a weakening of the notions of Daugavet- and
$\Delta$-points, and we will see how those can help simplify the
problem from Question~\ref{quest:delta_SR}.

Let $X$ be a Banach space and let $\varepsilon>0$.
We say that $x\in B_X$ is a \emph{$(2-\varepsilon)$ $Daugavet$-point}
if $B_X \subset \cconv{\Delta_\varepsilon(x)}$ or equivalently if we can
find for every $\delta>0$ and for every $x^*\in S_{X^*}$ an element
$y\in S(x^*,\delta)$ satisfying $\norm{x-y}\geq 2-\eps$.
We say that $x\in B_X$ is a \emph{$(2-\varepsilon)$ $\Delta$-point} if
$x\in \cconv{\Delta_\varepsilon(x)}$ or equivalently if we can find
for every $\delta>0$ and for every $x^*\in S_{X^*}$ such that $x\in
S(x^*,\delta)$ an element $y\in S(x^*,\delta)$ satisfying
$\norm{x-y}\geq 2-\eps$.

We say that $X$ \emph{admits almost Daugavet-points} if it admits a
  $(2-\varepsilon)$ Daugavet-point for every $\varepsilon>0$,  and we say that $X$ \emph{admits almost $\Delta$-points} if it admits a
  $(2-\varepsilon)$ $\Delta$-point for every $\varepsilon>0$. 

We say that a Banach space $X$ \emph{contains $\ell_p^n$'s uniformly}
($1 \le p \le \infty$)
if for all $\varepsilon > 0$ and $n \in \mathbb{N}$
there exist $x_1, \ldots, x_n \in B_X$
such that for all sequences of scalars $(a_k)$
\begin{equation*}
  (1+\varepsilon)^{-1}\|(a_n)\|_p
  \le
  \norm{ \sum_{k=1}^n a_k x_k}
  \le
  \|(a_n)\|_p.
\end{equation*}

We will start by studying the existence of almost Daugavet- and
$\Delta$-points in some classical spaces. We highlight the following
observation, which although obvious from the definition will provide
an easy way of constructing almost $\Delta$-points.

\begin{obs}
  Let $\varepsilon > 0$.
  If $x \in \conv \Delta_\varepsilon(x)$, then
  $x$ is a $(2 - \varepsilon)$ $\Delta$-point.
\end{obs}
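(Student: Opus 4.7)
The observation is essentially a tautology once one unwinds the definition, but it is worth stating because it gives a concrete finite-sum criterion for producing $(2-\varepsilon)$ $\Delta$-points. Two equally short arguments are available, and I would simply present the first one with the second as an aside.

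The plan is to use the definition directly. By the definition given just above the observation, $x$ is a $(2-\varepsilon)$ $\Delta$-point precisely when $x \in \cconv \Delta_\varepsilon(x)$. Since $\conv A \subseteq \cconv A$ for every subset $A$ of a Banach space, the assumption $x \in \conv \Delta_\varepsilon(x)$ immediately yields $x \in \cconv \Delta_\varepsilon(x)$, which is exactly what has to be shown. There is no obstacle; this is a one-line containment.

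For concreteness it may be useful to also verify the slice-based equivalent formulation directly, since that is the form most often used in practice. If $x = \sum_{i=1}^n \lambda_i y_i$ is a convex combination with $y_i \in \Delta_\varepsilon(x)$, then for any $\delta > 0$ and any $x^* \in S_{X^*}$ with $x \in S(x^*,\delta)$ we have $\sum_{i=1}^n \lambda_i x^*(y_i) = x^*(x) > 1 - \delta$, so at least one index $i_0$ satisfies $x^*(y_{i_0}) > 1 - \delta$, i.e.\ $y_{i_0} \in S(x^*,\delta)$; and $\|x - y_{i_0}\| \geq 2 - \varepsilon$ by the choice of $y_{i_0} \in \Delta_\varepsilon(x)$. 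This reproduces the slice criterion defining a $(2-\varepsilon)$ $\Delta$-point. Again nothing is hard here; the main point of recording the observation is methodological, namely that in later constructions it suffices to exhibit a finite convex decomposition rather than a limit of such decompositions.
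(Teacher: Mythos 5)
Your proof is correct and matches the paper, which states this observation without proof precisely because, as you note, it follows immediately from $\conv A \subseteq \cconv A$ and the definition of a $(2-\varepsilon)$ $\Delta$-point as $x \in \cconv \Delta_\varepsilon(x)$. The additional slice-based verification is a fine sanity check but not needed.
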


Using this, we can prove the two following lemmas.

\begin{lem}
  \label{lem:ell1-finitely-rep-almost-delta-points}
  If a Banach space $X$ contains $\ell_1^n$'s uniformly,
  then $X$ admits almost $\Delta$-points.
\end{lem}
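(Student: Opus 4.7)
The plan is to invoke the observation stated immediately before the lemma: if $x \in B_X$ can be written as a finite convex combination of elements of $B_X$ each at distance at least $2-\eps$ from $x$, then $x \in \conv \Delta_\eps(x)$ and hence $x$ is a $(2-\eps)$ $\Delta$-point. The task therefore reduces to exhibiting, for each $\eps>0$, one such convex combination inside $B_X$.

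Given $\eps>0$, I will fix an integer $n$ and a small distortion parameter $\eta>0$ (both to be chosen depending on $\eps$) and use the hypothesis to select $x_1,\ldots,x_n\in B_X$ that are $(1+\eta)$-isomorphic to the canonical basis of $\ell_1^n$, meaning
\[
(1+\eta)^{-1}\sum_{k=1}^n |a_k|
\leq
\norm{\sum_{k=1}^n a_k x_k}
\leq \sum_{k=1}^n |a_k|
\]
for all scalars $(a_k)$. The natural candidate for the almost $\Delta$-point is the uniform convex combination $x = \frac{1}{n}\sum_{k=1}^n x_k \in B_X$. For each index $j$ I rewrite
\[
x - x_j = -\frac{n-1}{n}\, x_j + \frac{1}{n}\sum_{k \neq j} x_k
\]
and apply the lower $\ell_1^n$-estimate to obtain $\norm{x - x_j} \geq (1+\eta)^{-1}\cdot \frac{2(n-1)}{n}$. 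Choosing $n$ large and $\eta$ small (for example $n \geq 8/\eps$ and $\eta \leq \eps/8$) forces this bound to be at least $2-\eps$, so every $x_j$ belongs to $\Delta_\eps(x)$. Consequently $x \in \conv\set{x_1,\ldots,x_n} \subseteq \conv \Delta_\eps(x)$, and the observation concludes that $x$ is a $(2-\eps)$ $\Delta$-point.

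There is no real obstacle in this plan; it is essentially a direct application of the observation, with a single routine computation of the $\ell_1^n$-norm of one explicit coefficient vector. The only delicate point is the calibration of $n$ and $\eta$ at the very beginning so that the final estimate $(1+\eta)^{-1}\cdot \frac{2(n-1)}{n} \geq 2-\eps$ is achieved, which is straightforward. Note that the resulting $x$ satisfies $\norm{x} \geq (1+\eta)^{-1}$, close to but generally strictly less than $1$; this causes no trouble because the paper's definition of a $(2-\eps)$ $\Delta$-point explicitly allows any element of $B_X$.
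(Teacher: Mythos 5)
Your proof is correct and follows essentially the same route as the paper: the authors also take the uniform convex combination $x=\frac{1}{n}\sum_{k=1}^n x_k$ of an almost isometric $\ell_1^n$-basis, apply the lower estimate to get $\norm{x-x_j}\ge (1+\eta)^{-1}\bigl(2-\tfrac{2}{n}\bigr)$, and invoke the preceding observation. The only difference is that you make the calibration of $n$ and $\eta$ explicit, which the paper leaves implicit.
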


\begin{proof}
  Let $\varepsilon > 0$ and $n \in \mathbb{N}$
  and find $x_1, \ldots, x_n \in B_X$
  from the definition of containing $\ell_1^n$'s uniformly.
  Let $x = \frac{1}{n}\sum_{k=1}^n x_k$.
  Then $\|x\| \le 1$ and for each $j$
  \begin{equation*}
    \|x - x_j\|
    =
    \norm{\sum_{k \neq j} \frac{1}{n} x_k
    +
    (\frac{1}{n}-1)x_j}
    \ge
    (1+\varepsilon)^{-1}\left(2 - \frac{2}{n}\right)
  \end{equation*}
  which can be made as close to $2$ as we like.
\end{proof}

\begin{lem}
  If a Banach space $X$ contains $\ell_\infty^n$'s uniformly,
  then $X$ admits almost $\Delta$-points.
\end{lem}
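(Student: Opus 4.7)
\smallskip
The plan is to mirror the strategy of the previous lemma (the $\ell_1^n$ case), but to replace the vertices of a simplex by the signed vertices of the $\ell_\infty^n$ ball of the form $v_k = 2 e_k - \sum_{j=1}^n e_j$, which have $\ell_\infty$-norm $1$ and pairwise distance exactly $2$. The key computation in $\ell_\infty^n$ is that their average is $\frac{2-n}{n}\sum_j e_j$, and a direct calculation gives $\|v_k - \text{avg}\|_\infty = \frac{2(n-1)}{n} \to 2$, so this average is an exact convex combination of vectors whose distance to it is arbitrarily close to $2$. This is exactly the hypothesis of the observation preceding the previous lemma.

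To promote this to a general Banach space $X$ containing $\ell_\infty^n$'s uniformly, I would fix $\varepsilon > 0$ and first choose $n$ large and $\eta > 0$ small enough so that $(1+\eta)^{-1}\frac{2(n-1)}{n} \geq 2-\varepsilon$. Then I would invoke the uniform containment to produce $x_1,\dots,x_n \in B_X$ satisfying the $(1+\eta)$-two-sided estimate with $\ell_\infty^n$. Setting $v_k := 2x_k - \sum_{j=1}^n x_j$ and $x := \frac{1}{n}\sum_{k=1}^n v_k = \frac{2-n}{n}\sum_{j=1}^n x_j$, the upper $\ell_\infty$-estimate yields $\|v_k\| \leq 1$ and $\|x\| \leq \frac{n-2}{n} \leq 1$, so all these points lie in $B_X$. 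The differences $x - v_k = \frac{2}{n}\sum_j x_j - 2x_k$ correspond to $\ell_\infty$-coefficient vectors of norm $\frac{2(n-1)}{n}$, so the lower $\ell_\infty$-estimate gives $\|x - v_k\| \geq (1+\eta)^{-1}\frac{2(n-1)}{n} \geq 2-\varepsilon$.

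Thus each $v_k$ belongs to $\Delta_\varepsilon(x)$, and by construction $x = \frac{1}{n}\sum_k v_k \in \conv \Delta_\varepsilon(x)$. The observation then identifies $x$ as a $(2-\varepsilon)$ $\Delta$-point of $X$, and since $\varepsilon > 0$ was arbitrary, $X$ admits almost $\Delta$-points.

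I do not foresee a real obstacle: all the work is carried by the choice of the symmetric ``corners'' $v_k$ of the cube, and the verification is just a translation of two elementary $\ell_\infty^n$-identities through the uniform containment. The only point that requires a bit of care is the order of quantifiers, namely first picking $n$ and $\eta$ appropriately in terms of $\varepsilon$ so that the lower estimate exceeds $2-\varepsilon$, but this amounts to a routine tuning of constants.
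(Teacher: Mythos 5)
Your proposal is correct and follows essentially the same route as the paper: the paper takes $y_i = -2x_i + \sum_k x_k$ (your $v_k$ up to an overall sign), forms their average $x=(1-\tfrac{2}{n})\sum_k x_k$, and uses the lower $\ell_\infty$-estimate to get $\|y_j-x\|\ge (1+\varepsilon)^{-1}(2-\tfrac{2}{n})$, concluding via the same observation that $x\in\conv\Delta_\varepsilon(x)$. Your explicit check that $\|v_k\|\le 1$ and the upfront tuning of $n$ and $\eta$ are just slightly more careful versions of what the paper leaves implicit.
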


\begin{proof}
  Let $\varepsilon > 0$ and $n \in \mathbb{N}$
  and find $x_1, \ldots, x_n \in B_X$
  from the definition of containing $\ell_\infty^n$'s uniformly.
  For $i = 1, \ldots, n$ we define
  \begin{equation*}
    y_i = - 2x_i + \sum_{k=1}^n x_k
  \end{equation*}
  and $x = \frac{1}{n}\sum_{i=1}^n y_i$,
  that is
  \begin{equation*}
    x = (1 - \frac{2}{n}) \sum_{k=1}^n x_k.
  \end{equation*}
  Then $\|x\| \le 1$ and for each $j$
  \begin{equation*}
    \|y_j - x\|
    =
    \norm{\sum_{k \neq j} \frac{2}{n} x_k
    +
    \left(\frac{2}{n}-2\right)x_j}
    \ge
    (1+\varepsilon)^{-1}\left(2 - \frac{2}{n}\right)
  \end{equation*}
  which can be made as close to $2$ as we like.
\end{proof}

The two lemmas above can be formulated as:
If $X$ does not have finite co-type or
does not have non-trivial type, then
$X$ admits almost $\Delta$-points.
In particular the spaces $c_0$ and $\ell_1$ both admits almost
$\Delta$-points. Let us recall that they do not admit
$\Delta$-points (by e.g. \cite[Theorem~2.17]{ALMT}).
For those spaces we can say more.

\begin{lem}
  The space $c_0$ does not admit almost Daugavet-points.
\end{lem}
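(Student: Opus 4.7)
The plan is to show that every $x\in B_{c_0}$ admits some $\eps>0$, some $x^*\in S_{\ell_1}=S_{c_0^*}$, and some $\delta>0$ such that every $y\in S(x^*,\delta)$ satisfies $\norm{x-y}_\infty\leq 2-\eps$. By definition this rules out $x$ being a $(2-\eps)$ Daugavet-point, and hence being an almost Daugavet-point. The case $\norm{x}_\infty<1$ is trivial since $\norm{x-y}_\infty\leq\norm{x}_\infty+1<2$ for every $y\in B_{c_0}$ regardless of the slice, so the real work is for $x\in S_{c_0}$.

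Assuming $\norm{x}_\infty=1$, the key observation is that $x\in c_0$ forces the set $A:=\{n\in\Natural:|x_n|>1/2\}$ to be finite and (since $x$ attains its norm) non-empty. Set $k:=|A|$ and consider
\begin{equation*}
  x^*:=\frac{1}{k}\sum_{n\in A}\sgn(x_n)e_n^*\in S_{\ell_1},\qquad \delta:=\frac{1}{2k}.
\end{equation*}
For $y\in S(x^*,\delta)$ the inequality $\sum_{n\in A}\sgn(x_n)y_n>k-k\delta$, together with the fact that each summand is at most $1$, forces by an elementary averaging argument that $\sgn(x_n)y_n>1-k\delta=1/2$ for every $n\in A$. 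In particular $\sgn(y_n)=\sgn(x_n)$ on $A$, and since $x_n,y_n\in[-1,1]$ lie in the same half-line with absolute value exceeding $1/2$, we obtain $|x_n-y_n|\leq 1/2$ on $A$. On the complement the crude bound $|x_n-y_n|\leq|x_n|+|y_n|\leq 1/2+1=3/2$ holds by the very definition of $A$.

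Taking the supremum then yields $\norm{x-y}_\infty\leq 3/2$ for every $y\in S(x^*,\delta)$, so $x$ is not a $(2-1/4)$ Daugavet-point and \emph{a fortiori} not an almost Daugavet-point. No serious obstacle appears: the whole argument rests on the vanishing-at-infinity feature of $c_0$-elements, which confines the coordinates where $|x_n|$ is non-negligible to a finite set and allows a finite averaged coordinate functional to pin the slice close to $x$ on $A$, while the unit ball constraint takes care of the remaining coordinates.
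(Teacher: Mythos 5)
Your proof is correct and follows essentially the same route as the paper: isolate the finite set of coordinates where $|x_n|$ is large, slice with the normalized average of the corresponding signed coordinate functionals, and use the averaging (the paper's Lemma~\ref{intersection_lemma}, which you rederive by hand) to pin the slice close to $x$ on that set while bounding the remaining coordinates crudely. The only cosmetic differences are your fixed threshold $1/2$ in place of the paper's $1-\delta$ and your explicit (trivial) treatment of points of norm less than one.
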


\begin{proof}
  Let $x = (x_i) \in S_{c_0}$.
  Given $\delta \in (0,1)$ there exists a
  finite non-empty set $J\subset \Natural$ of cardinality $n \geq 1$
  such that $|x_j| \geq 1 - \delta$ for every $j \in J$ and
  $|x_i| < 1 - \delta$ for every $i \in \mathbb{N} \setminus J$.
  Now let
  $x^* = \frac{1}{n}\sum_{j\in J} \sgn(x_j) e_j^* \in S_{\ell_1}$.
  By Lemma~\ref{intersection_lemma} we have
  $S(x^*,\frac{\delta}{n})\subset \bigcap_{j\in J} S(\sgn(x_j) e_j^*,\delta)$
  so if $y=(y_i)_{i\geq 1}$ is in  $S(x^*,\frac{\delta}{n})$, then it
  satisfies $\abs{x_j-y_j}\leq \delta$ for every $j\in J$,  and thus
  $\norm{x - y} \leq \max\{ \delta,\ 2 - \delta\}$.
  The conclusion follows.
\end{proof}

\begin{lem}
  The space $\ell_1$ admits almost Daugavet-points.
\end{lem}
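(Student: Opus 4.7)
The plan is to exhibit an explicit family of almost Daugavet-points in $\ell_1$ built from the first $n$ basis vectors. Specifically, given $\varepsilon>0$, I would take $n$ an integer with $n>2/\varepsilon$ and consider
\begin{equation*}
  x = \frac{1}{n}\sum_{i=1}^n e_i \in S_{\ell_1}.
\end{equation*}
I claim $x$ is a $(2-\varepsilon)$ Daugavet-point. This should follow from a direct verification using the characterization of slices of $B_{\ell_1}$ via $S_{\ell_\infty}$.

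Given any $x^* = (x_i^*) \in S_{\ell_\infty}$ and any $\delta>0$, since $\|x^*\|_\infty = \sup_i |x_i^*| = 1$, there is an index $j$ with $|x_j^*|>1-\delta$. I would then set $y = \sgn(x_j^*)\,e_j \in S_{\ell_1}$. Clearly $x^*(y) = |x_j^*| > 1 - \delta$, so $y \in S(x^*,\delta)$.

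It remains to check that $\|x-y\|_1 \geq 2 - \varepsilon$. A case analysis on $j$ does the job: if $j>n$ or $\sgn(x_j^*) = -1$, the supports of $x$ and $-y$ combine additively (or $y$ is already disjoint from $\supp x$), yielding $\|x-y\|_1 = 2$. The only non-trivial case is $j \leq n$ with $\sgn(x_j^*) = +1$, where
\begin{equation*}
  \|x - y\|_1 = \sum_{i \leq n,\, i \neq j} \frac{1}{n} + \left|\frac{1}{n}-1\right| = \frac{n-1}{n} + \frac{n-1}{n} = 2 - \frac{2}{n} \geq 2 - \varepsilon
\end{equation*}
by the choice of $n$. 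Since the slice $S(x^*,\delta)$ was arbitrary, this shows that $x$ is a $(2-\varepsilon)$ Daugavet-point.

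There is essentially no obstacle here; the only subtlety is that one must remember that $\ell_\infty$-norm attainment need not occur at any fixed coordinate, but the supremum nature of the norm is enough to produce a suitable $j$ for any prescribed $\delta$. The same construction could equally be phrased via the observation that any uniformly spread probability combination of basis vectors is nearly disjoint in $\ell_1$-norm from the chosen extremal point of $B_{\ell_1}$ producing the slice.
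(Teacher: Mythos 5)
Your proof is correct and follows essentially the same route as the paper: the same point $x=\frac{1}{n}\sum_{i=1}^n e_i$, the same choice of a coordinate $j$ with $|x_j^*|>1-\delta$, and the same verification that $\|x\pm e_j\|_1\ge 2-\frac{2}{n}$. The only cosmetic difference is that you spell out the case analysis the paper leaves as "easy to check."
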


\begin{proof}
  Fix $n\geq 1$ and let $x=\frac{1}{n}\sum_{i=1}^n e_i\in S_{\ell_1}$
  where $(e_i)$ is the unit vector basis of $\ell_1$.
  Then $x$ is a  $(2-\frac{2}{n})$ $\Delta$-point by construction and
  we will show that it is in fact a $(2-\frac{2}{n})$ Daugavet-point.
  Indeed fix $x^*\in S_{\ell_\infty}$ and $\delta >0$. Since $\sup_i
  \abs{x^*(e_i)} = 1$ we can find some $i_0$ such that
  $\abs{x^*(e_{i_0})} > 1 - \delta$ that is $e_{i_0}\in S(x^*,\delta)$ or
  $-e_{i_0} \in S(x^*,\delta)$.
  Now it is easy to check that
  $\norm{x\pm e_{i_0}} \geq 2-\frac{2}{n}$ so we are done.
\end{proof}

\begin{rem}
  We have seen in Theorem~\ref{thm:prop_alpha_no_Delta} that
  if a Banach space $X$ has Rolewicz' property $(\alpha)$,
  then $X$ has no $\Delta$-points.
  But $X$ can contain almost $\Delta$-points.

  Indeed, let $T$ be the Tsirelson space.
  Even though $T$ fails Rolewicz' property $(\alpha)$
  there exists an equivalent norm $|\cdot|$, so that $(T,|\cdot|)$
  has Rolewicz' property $(\alpha)$
  by \cite[Theorems~3 and 4]{MR924764}.

  Since $T$ contains $\ell_1^n$'s uniformly
  the same holds for $(T,|\cdot|)$ by James' $\ell_1$-distortion theorem.
  Thus $(T,|\cdot|)$ admits almost $\Delta$-points by
  Lemma~\ref{lem:ell1-finitely-rep-almost-delta-points}.
\end{rem}

 The
following result, whose proof is clear from
Proposition~\ref{prop:unif_nsq_char}, covers a lot of classical norms, and in particular uniformly smooth and uniformly convex ones.

\begin{prop}
  If $X$ is uniformly non-square, then $X$ does not admit almost $\Delta$-points.
\end{prop}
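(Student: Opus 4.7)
The plan is to adapt the argument behind Corollary~\ref{cor:unif-nsq-no-Delta} by exploiting the quantitative bound in Proposition~\ref{prop:unif_nsq_char}. Since $X$ is uniformly non-square, fix $\eps_0>0$ such that
\begin{equation*}
  \diam S(x^*,\eps_0) \le 2 - 2\eps_0 \quad \text{for every } x^* \in S_{X^*}.
\end{equation*}
This bound is strictly less than $2$, so it will contradict the defining property of a $(2-\eta)$ $\Delta$-point as soon as $\eta<2\eps_0$. Concretely, I will argue by contradiction: suppose $X$ admits almost $\Delta$-points, and pick any $\eta \in (0, \eps_0)$ together with a $(2-\eta)$ $\Delta$-point $x \in B_X$.

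The first step is the preliminary observation that necessarily $\|x\| \geq 1 - \eta$. Indeed, if $\|x\|<1-\eta$, then for every $y \in B_X$ we would get $\|x - y\| \le \|x\| + 1 < 2 - \eta$, so no slice of $B_X$ could contain a witness $y$ with $\|x-y\| \ge 2-\eta$, which would contradict the equivalent reformulation of being a $(2-\eta)$ $\Delta$-point. In particular, $x \neq 0$, so by Hahn--Banach we may pick some $x^* \in S_{X^*}$ with $x^*(x) = \|x\| \ge 1 - \eta$.

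The second step uses this to put $x$ inside a slice of controlled diameter. Since $\eta < \eps_0$, we have $x^*(x) \ge 1 - \eta > 1 - \eps_0$, and hence $x \in S(x^*, \eps_0)$. Applying the $(2-\eta)$ $\Delta$-point property to the slice $S(x^*, \eps_0)$ yields some $y \in S(x^*, \eps_0)$ with $\|x - y\| \ge 2 - \eta$. On the other hand, the uniform non-squareness bound forces $\|x-y\| \le 2 - 2\eps_0$. Combining these and using $\eta < \eps_0 \le 2\eps_0$ gives the contradiction $2 - \eta \le 2 - 2\eps_0$, i.e.\ $\eta \ge 2\eps_0$.

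There is no real obstacle here; the argument is essentially a quantified rerun of Corollary~\ref{cor:unif-nsq-no-Delta}. The one subtlety worth flagging is that the ambient definition places almost $\Delta$-points in $B_X$ rather than $S_X$, so the preliminary lower bound $\|x\| \ge 1 - \eta$ is needed before one may speak of a norming functional at $x$; this is what the first step takes care of.
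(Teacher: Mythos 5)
Your proof is correct and is exactly the argument the paper has in mind when it says the result "is clear from Proposition~\ref{prop:unif_nsq_char}": take $\eta<\eps_0$, note a $(2-\eta)$ $\Delta$-point must have norm at least $1-\eta$, place it in a slice $S(x^*,\eps_0)$ via a norming functional, and contradict the diameter bound $2-2\eps_0$. The preliminary observation that $\|x\|\ge 1-\eta$ is a reasonable point to make explicit, since the definition indeed allows $x\in B_X$ rather than $S_X$.
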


It is clear from Proposition~\ref{prop:unif_nsq_char} that
the unit ball of every non uniformly non-square norm admits
small slices of diameter arbitrarily close to $2$, so ruling
out almost Daugavet- and $\Delta$-points is non-trivial
even in superreflexive spaces.

The following result is the main reason for the introduction of the
notions almost Daugavet- and almost $\Delta$-points.

\begin{prop}\label{prop:superrefl-almostDP-to-DP}
  Let $X$ be a superreflexive Banach space
  and let $\mathcal{U}$ be some free ultrafilter on $\mathbb{N}$.

  If $X$ admits almost $\Delta$- (resp. Daugavet-) points,
  then there exists $x \in X^{\mathcal{U}}$ with
  $\|x\| = 1$ such that for any slice $S$ of $B_{X^{\mathcal{U}}}$
  containing $x$ (resp. any slice $S$ of $B_{X^{\mathcal{U}}}$)
  there exists $y \in S$ with $\|y\| = 1$ and
  $\|x - y\| = 2$.

  In particular, if there exists a superreflexive Banach space
  which admits an almost $\Delta$- (resp. Daugavet-) point, then
  there exists a superreflexive Banach space
  with a $\Delta$- (resp. Daugavet-) point.
\end{prop}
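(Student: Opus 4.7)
The plan is to carry over the almost property from $X$ to a genuine $\Delta$-point (resp. Daugavet-point) in the ultrapower $X^{\mathcal{U}}$ via a diagonal construction along $\mathcal{U}$. For each $n \ge 1$ we fix a $(2-\tfrac{1}{n})$ $\Delta$-point (resp. Daugavet-point) $x_n \in S_X$, and define $x = [(x_n)] \in X^{\mathcal{U}}$. Since $\|x_n\| = 1$ for every $n$, we have $\|x\| = \lim_{\mathcal{U}} \|x_n\| = 1$.

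The key structural fact I will use is that, because $X$ is superreflexive, so is $X^{\mathcal{U}}$, and there is a canonical isometric identification $(X^{\mathcal{U}})^* = (X^*)^{\mathcal{U}}$, with the action given by $[(\phi_n)]([(z_n)]) = \lim_{\mathcal{U}} \phi_n(z_n)$. Given a slice $S = S(\Phi, \delta)$ with $\Phi \in S_{(X^{\mathcal{U}})^*}$, I write $\Phi = [(\phi_n)]$. After a harmless renormalization on a set outside $\mathcal{U}$, we may assume $\|\phi_n\|_{X^*} = 1$ for all $n$.

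Suppose we are in the $\Delta$-point case and $x \in S(\Phi, \delta)$. Pick $\delta' \in (0,\delta)$ with $\Phi(x) > 1-\delta'$. Then the set $A = \{ n : \phi_n(x_n) > 1-\delta' \}$ lies in $\mathcal{U}$; for each $n \in A$, the point $x_n$ belongs to the slice $S(\phi_n, \delta')$, and since $x_n$ is a $(2-\tfrac{1}{n})$ $\Delta$-point we may pick $y_n \in S(\phi_n, \delta')$ with $\|x_n - y_n\| \ge 2 - \tfrac{1}{n}$. For $n \notin A$ let $y_n$ be anything in $B_X$. Setting $y = [(y_n)] \in B_{X^{\mathcal{U}}}$, we compute $\Phi(y) = \lim_{\mathcal{U}} \phi_n(y_n) \ge 1-\delta' > 1-\delta$, so $y \in S$, and $\|x-y\| = \lim_{\mathcal{U}} \|x_n - y_n\| \ge 2$. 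Combining with $\|x\|, \|y\| \le 1$ forces $\|x-y\| = 2$ and, by the triangle inequality, $\|y\| = 1$. The Daugavet case runs identically, except that the hypothesis $x \in S$ is dropped and the stronger Daugavet property is applied to each $\phi_n$ directly.

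The main obstacle is the correct handling of the duality $(X^{\mathcal{U}})^* = (X^*)^{\mathcal{U}}$ and the normalization of the representatives $\phi_n$; both are standard consequences of superreflexivity (reflexivity of $X^{\mathcal{U}}$ together with the canonical pairing), but they should be cited carefully. Once the construction is done, the ``in particular'' part is immediate: $X^{\mathcal{U}}$ is superreflexive, and the element $x$ we have built is, by what was just shown, a genuine $\Delta$-point (resp. Daugavet-point) of $X^{\mathcal{U}}$, since from every slice of $B_{X^{\mathcal{U}}}$ containing it (resp. every slice) we can extract a unit vector at distance exactly $2$.
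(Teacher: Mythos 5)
Your proof is correct and follows essentially the same route as the paper: identify $(X^{\mathcal{U}})^*$ with $(X^*)^{\mathcal{U}}$ via superreflexivity, pass to a set of the ultrafilter on which the coordinates $x_n$ lie in the corresponding slices of $B_X$, apply the $(2-\tfrac1n)$ property coordinatewise, and take the ultralimit of the resulting far-away points. The only cosmetic difference is that you normalize the representatives $\phi_n$ (legitimate, since $\lim_{\mathcal{U}}\|\phi_n\|=1$ so the equivalence class is unchanged), whereas the paper keeps non-normalized representatives and instead adjusts the width of the coordinate slices $S_n$; both devices resolve the same small technical issue.
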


\begin{proof}
  Let $\mathcal{U}$ be a free ultrafilter on $\mathbb{N}$.
  Since $X$ is superreflexive we have
  $(X^{\mathcal{U}})^* = (X^*)^{\mathcal{U}}$
  (see e.g. \cite[Proposition~7.1]{Hei1}).
  For each $n \in \mathbb{N}$  choose a $(2-\frac{1}{n})$ $\Delta$-point
  $x_n \in B_X$. Let $x = (x_n)_{\mathcal{U}}$.
  We have $\|x_n\| \ge 1 - \frac{1}{n}$ hence
  $\|x\| = \lim_{\mathcal{U}} \|x_n\| = 1$.

   Let $x^* = (x_n^*)_{\mathcal{U}}$ with $\|x^*\| = 1$
  and $\delta > 0$.
  Assume that $x \in S(x^*,\delta)$.
  This means that there is an $\eta>0$ such that
  $x^*(x) = \lim_{\mathcal{U}} x_n^*(x_n) > 1 - \delta + \eta$
  and thus there is a set $A\in \mathcal{U}$ such that
  \begin{equation*}
    x_n^*(x_n) > 1 - \delta + \eta
    = \|x_n^*\| - (\|x_n^*\| - (1 - \delta + \eta)).
  \end{equation*}
  for every $n \in A$.
  In particular we have $ \norm{x_n^*}>1-\delta+\eta$ for these $n$ and
  \begin{equation*}
    x_n \in S_n :=
    \{ y \in B_X : x_n^*(y) > \|x_n^*\| - (\|x_n^*\| - (1 - \delta + \eta))\}.
  \end{equation*}
  We can then use the definition to find $y_n \in S_n$
  with $\|x_n - y_n\| \ge 2 - \frac{1}{n}$.
  For $n \notin A$ we just let $y_n = 0$.

  Now $y = (y_n) \in X^{\mathcal{U}}$ and
  \begin{equation*}
    \|y\|
    =
    \lim_{\mathcal{U}} \|y_n\|
    \ge
    \lim_{\mathcal{U}} (\|x_n - y_n\| - \|x_n\|)
    \ge
    \lim_{\mathcal{U}} (2 - \frac{1}{n} - 1)
    = 1.
  \end{equation*}
  Finally we check that $y$ is in the slice $S(x^*,\delta)$
  \begin{equation*}
    x^*(y) = \lim_{\mathcal{U}} x_n^*(y_n)
    = \lim_{\mathcal{U}}\|x_n^*\|
    \lim_{\mathcal{U}} \frac{x_n^*}{\|x_n^*\|}(y_n)
    \geq 1 - \delta + \eta >1-\delta.
  \end{equation*}
  
  For the in particular part we just note that ultrapowers
  of superreflexive spaces are superreflexive by finite
  representability of $X^{\mathcal{U}}$ in $X$.
\end{proof}

\begin{cor}
  If $X$ is finite dimensional,
  then it does not admit almost $\Delta$-points.
\end{cor}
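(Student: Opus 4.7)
The plan is to derive the corollary as an immediate consequence of Proposition~\ref{prop:superrefl-almostDP-to-DP} together with Theorem~\ref{thm:reflAUC_no_Delta}. Any finite dimensional space $X$ is trivially superreflexive, so if it admitted almost $\Delta$-points, then the proposition (applied to any free ultrafilter $\mathcal{U}$ on $\mathbb{N}$) would produce a norm-one element of $X^{\mathcal{U}}$ that is a $\Delta$-point of $X^{\mathcal{U}}$.

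The key observation is then that for finite dimensional $X$ the ultrapower $X^{\mathcal{U}}$ is isometrically isomorphic to $X$ itself: by compactness of the bounded subsets of $X$, the map $(x_n)_{\mathcal{U}} \mapsto \lim_{\mathcal{U}} x_n$ is well-defined, onto, and isometric. Thus a $\Delta$-point of $X^{\mathcal{U}}$ pulls back to a $\Delta$-point of $X$, contradicting Theorem~\ref{thm:reflAUC_no_Delta}, which guarantees that finite dimensional spaces admit no $\Delta$-points.

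There is essentially no obstacle here since all the substantive work has been done upstream. If one preferred a self-contained argument avoiding ultrapowers, a direct compactness extraction would also work: pick $(2-\varepsilon_n)$ $\Delta$-points $x_n$ with $\varepsilon_n \to 0$, extract via compactness of $B_X$ a subsequence converging to some $x \in S_X$, and verify, by shrinking the slice parameter $\delta$ slightly to accommodate the convergence $x_n \to x$, that every $x^* \in S_{X^*}$ with $x \in S(x^*,\delta)$ contains in $S(x^*,\delta)$ the limit of a subsequence of witnesses $y_n \in S(x^*,\delta')$ satisfying $\|x_n - y_n\| \ge 2-\varepsilon_n$; the limit $y$ then lies in $S(x^*,\delta)$ with $\|x-y\| = 2$, so $x$ is a $\Delta$-point and we again contradict Theorem~\ref{thm:reflAUC_no_Delta}.
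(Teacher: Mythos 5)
Your main argument is exactly the paper's: apply Proposition~\ref{prop:superrefl-almostDP-to-DP}, identify $X^{\mathcal{U}}$ with $X$ itself for finite dimensional $X$ (your map $(x_n)_{\mathcal{U}}\mapsto\lim_{\mathcal{U}}x_n$ is the inverse of the paper's diagonal identification), and invoke the fact that finite dimensional spaces have no $\Delta$-points. The proof is correct; the sketched direct compactness extraction is a fine self-contained alternative but is not needed.
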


\begin{proof}
  This is an immediate consequence of previous proposition since
  finite dimensional spaces do not admit $\Delta$-points and since any
  ultrapower of a finite dimensional space is trivially identified
  with the space itself (under the diagonal map).
\end{proof}

By Proposition~\ref{prop:superrefl-almostDP-to-DP}
we can thus restate Question~\ref{quest:delta_SR} in the following way.

\begin{quest}
  Is it possible to find a superreflexive space admitting almost
  $\Delta$-points or almost Daugavet-points?
\end{quest}

In particular it would be interesting to investigate
the following question.

\begin{quest}
  Is it possible to find a renorming of $\ell_2$ with almost
  $\Delta$-points or almost Daugavet points?
\end{quest}

Also note that proving that superreflexive spaces do not admit
$\Delta$- (or Daugavet-) points in general would immediately imply by
the preceding proposition that superreflexive spaces do not admit
almost $\Delta$- (or Daugavet-) points.

\section*{Acknowledgments}
\label{sec:ack}

The majority of this work was done while
Y.~Perreau visited University of Agder, Kristiansand,
in October 2021, and while T.~A.~Abrahamsen and V.~Lima visited
Universit\'e Bourgogne Franche-Comt\'e, Besan\c{c}on
in November 2021.
We thank everyone who made these visits possible,
in particular, the AURORA travel programme supported by 
the Norwegian Research Council,  the French Ministry for Europe and Foreign Affairs, and the French Ministry for Higher Education, Research and Innovation.
We thank Gilles Lancien and Antonin Prochazka from the university of
Besançon for valuable discussions on the subject and for their
investment in the realization of the project. We also thank Stanimir
Troyanski for valuable discussions on the subject.

\def\polhk#1{\setbox0=\hbox{#1}{\ooalign{\hidewidth
  \lower1.5ex\hbox{`}\hidewidth\crcr\unhbox0}}} \def\cprime{$'$}
  \def\cprime{$'$} \def\cprime{$'$} \def\cprime{$'$}
\providecommand{\bysame}{\leavevmode\hbox to3em{\hrulefill}\thinspace}
\providecommand{\MR}{\relax\ifhmode\unskip\space\fi MR }
\providecommand{\MRhref}[2]{%
  \href{http://www.ams.org/mathscinet-getitem?mr=#1}{#2}
}
\providecommand{\href}[2]{#2}

\end{document}